\newtheorem{thm}{Theorem}[section]
\newtheorem{lem}[thm]{Lemma}
\newtheorem{prop}[thm]{Proposition}
\newtheorem{cor}[thm]{Corollary}
\theoremstyle{definition}
\theoremstyle{remark}
\newtheorem{remark}{Remark}[section]
\newcommand{\BUC}{{\rm BUC}}
\newcommand{\tim}{\times}   
\newcommand{\R}{\mathbb R}
\newcommand{\pl}{\partial}
\newcommand{\AC}{{\rm AC}}
\newcommand{\cA}{\mathcal{A}}
\newcommand{\cF}{\mathcal{F}}
\newcommand{\N}{\mathbb N}
\renewcommand{\d}{\,\mathrm{d}}
\newcommand{\cS}{\mathcal{S}}
\newcommand{\USC}{\,\mathrm{USC}}
\newcommand{\LSC}{\,\mathrm{LSC}}
\newcommand{\disp}{\displaystyle}
\newcommand{\gth}{\theta}
\newcommand{\gep}{\varepsilon}
\newcommand{\gd}{\delta}
\newcommand{\gs}{\sigma}
\newcommand{\gb}{\beta}
\newcommand{\fr}{\frac}
\newcommand{\gO}{\Omega}
\newcommand{\hb}{\mbox}
\newcommand{\bye}{\end{document}}
\newcommand{\by}{\end{proof}\end{document}}
\def\dist{\,{\rm dist}\,}
\def\gl{\lambda}
\def\ol{\overline}
\def\gG{\Gamma}
\def\ga{\alpha}
\def\gl{\lambda}
\def\Int{\mathrm{int}}
\def\gS{\Sigma}
\def\go{\omega}
\def\gg{\gamma}
\def\gz{\zeta}
\def\Lip{{\rm Lip}}
\def\mid{\,:\,}
\def\gL{\Lambda}
\numberwithin{equation}{section}
\begin{document}

\def\UC{{\rm UC}}  
  
\def\SP{\mathrm{SP}}  
\def\gL{\Lambda}

\title[Weak KAM theory]{Weak KAM aspects of \\        
convex Hamilton-Jacobi equations with \\     
Neumann type boundary conditions}

\author{Hitoshi Ishii}
\address{Department of Mathematics, Faculty of Education and Integrated Arts and 
Sciences, Waseda University, 1--6--1 Nishi-Waseda, Shinjuku, 
Tokyo, 169-8050 Japan}
\email{hitoshi.ishii@waseda.jp}
\thanks{  
The author was supported in part by KAKENHI \#18204009, \#20340026 
and \#21224001, JSPS}



\subjclass[2000]{35B20, 70H20, 37J50, 49L20}  


\date{September 23, 2009}


\keywords{Hamilton-Jacobi equations, Neumann type boundary conditions, 
weak KAM theory, Aubry-Mather theory, viscosity solutions}

\begin{abstract}
We study convex Hamilton-Jacobi  equations 
$H(x,Du)=a$ and $u_t+H(x,Du)=a$ in a bounded domain 
$\gO$ of $\R^n$ 
with the Neumann type boundary condition 
$D_\gg u=g$ in the viewpoint of weak KAM theory, 
where $\gg$ is a vector field on the boundary $\pl\gO$ pointing a direction oblique to $\pl\gO$.  
We establish the stability under the formations of infimum and of convex combinations 
of subsolutions of convex HJ equations, 
some comparison and existence results for convex and coercive HJ equations with 
the Neumann type boundary condition as well as existence results for the Skorokhod problem.  
We define the Aubry-Mather set associated with the Neumann type boundary problem 
and establish some properties of the Aubry-Mather set including 
the existence results for the ``calibrated'' extremals for the corresponding 
action functional (or variational problem).       
\end{abstract}

\maketitle

\section{Introduction}           
       
Let $\gO$ be an open connected subset of $\R^n$ with $C^1$ boundary. 
We denote by $\gG$ its boundary $\pl\gO$.  
We consider the Hamilton-Jacobi (HJ for short) equation with the Neumann type 
(or, in other words, oblique)   
boundary condition
\begin{align}  
&H(x,Du(x))=a \ \ \hb{ in }\gO \label{eq:i1}\\
&D_\gg u(x)=g(x) \ \ \hb{ on }\gG. \label{eq:i2}   
\end{align}
Here $a$ is a constant, $H$ is a given continuous function on $\ol\gO\tim\R^n$, called a 
Hamiltonian, $u$ represents the unknown function on $\ol\gO$, 
$Du$ denotes the gradient $(u_{x_1},...,u_{x_n})$, 
$D_\gg u=D_\gg u(x)$ 
denotes the directional derivative $\gg(x)\cdot Du(x)$ at $x$,   
$\gg$ is a continuous vector field: $\gG\to \R^n$, 
and $g$ is a given continuous function on $\gG$.

In addition to the continuity assumption on $H$, $g$, $\gg$, 
we make the following standing assumptions. 
\begin{description}
\setlength{\itemsep}{5pt}
\addtolength{\labelwidth}{20pt}
\item[(A1)] $H$ is a convex Hamiltonian, i.e., for each $x\in\ol\gO$ the function 
$H(x,\cdot)$ is convex on $\R^n$.
\item[(A2)] $H$ is coercive. That is, 
$\disp
\lim_{|p|\to\infty}H(x,p)=\infty.
$ for all $x\in\ol\gO$. 
\item[(A3)] $\gg$ is oblique to $\gG$. That is, for any $x\in\gG$, 
if $\nu(x)$ denotes the outer unit normal vector at $x$, then $\nu(x)\cdot \gg(x)>0$.  
\end{description} 

We consider the initial-value problem with the Neumann type (oblique) boundary condition 
\begin{align}
&u_t(x,t)+H(x,Du(x,t))=a \ \ \hb{ for } (x,t)\in\gO\tim (0,\,T), \label{eq:i3}\\ 
&D_\gg u(x,t)=g(x) \ \ \hb{ for }(x,t)\in \gG\tim (0,\,T), \label{eq:i4}\\
&u(x,0)=u_0(x) \ \ \hb{ for }x\in\ol\gO, \label{eq:i5}
\end{align}
where $0<T\leq\infty$ and $a\in\R$ are given,  
$u$ represents the unknown function on $\ol\gO\tim[0,\,T)$, 
$Du$ denotes the spatial gradient of $u$, $D_\gg u=\gg\cdot Du$,   
and $u_0$ is a given continuous function on $\ol\gO$.   

We call (\ref{eq:i1}) and (\ref{eq:i3}) {\em convex Hamilton-Jacobi equations} if $H$ is a convex 
Hamiltonian.  

The study of weak solutions (i.e., viscosity solutions) 
of problems (\ref{eq:i1}), (\ref{eq:i2}) and 
(\ref{eq:i3})--(\ref{eq:i5}) goes back to Lions  \cite{Lions-Neumann_85}, 
and the theory of existence and uniqueness of viscosity 
solutions of such boundary or initial-boundary value problems including 
the case of second-order elliptic or parabolic equations 
has been well-developed. 
We refer for the developments to \cite{Lions-Neumann_85, LionsTrudinger_86,  BarlesLions_91, DupuisIshii_90, UG, Barles_93} 
and references therein. 
However, if problem (\ref{eq:i1}), (\ref{eq:i2}) has a solution, then it admits clearly multiple solutions 
and therefore the problem is a bit out of the scope of such developments. Indeed, 
problem (\ref{eq:i1}), (\ref{eq:i2}) has a solution only if $a$ is assigned a 
specific value. 

The problem of finding a pair $(a,u)\in\R\tim C(\ol\gO)$ for which $u$ is a 
solution of (\ref{eq:i1}), (\ref{eq:i2}) is called an ergodic problem in terms of 
optimal control or an additive 
eigenvalue problem, and it is also part of weak KAM theory. 
See \cite{LionsPapanicolaouVaradhan} for a classical fundamental work on the ergodic problem for (\ref{eq:i1}) in the periodic setting and also \cite{Fathi-weakKAM, BardiCapuzzo_97}.   

Weak KAM theory concerns the link between the HJ equation (\ref{eq:i1}) 
in a domain $\gO$, with an appropriate boundary condition on its boundary $\pl\gO$, 
and the Lagrangian flow generated by the Lagrangian $L$ given by  
$L(x,\,\xi)=\sup_{p\in\R^n}(\xi\cdot p-H(x,p))$, (or the extremals (minimizers) to the action functional associated with $L$).  
We refer \cite{Fathi_97, E_99, Fathi-weakKAM, Evans-weakKAM_04} for pioneering work and further developments. We refer to \cite{IshiiMitake_07} for some results 
in this direction on HJ equations with the state-constraint boundary condition.  

A typical application of weak KAM theory to the evolution equation 
(\ref{eq:i3}) is in the study of the long-time behavior 
of solutions of (\ref{eq:i3}) with appropriate initial and boundary conditions. 
For these applications we refer to \cite{Fathi_98, Roquejoffre_01, 
DaviniSiconolfi_06, Ishii_08, IchiharaIshii_09, 
Mitake-asymptotic_08, Mitake-large-time_08}.

Our purpose in this paper is to establish some theorems concerning weak KAM theory   
for convex Hamilton-Jacobi equations. Indeed, we define the critical value (or the 
additive eigenvalue) and the Aubry-Mather set associated with (\ref{eq:i1}), 
(\ref{eq:i2}) and 
establish some of basic properties of the Aubry-Mather set, representation formulas 
for solutions of (\ref{eq:i1}), (\ref{eq:i2}) and the existence of extremals (or minimizers) for 
variational formulas of certain types of solutions of (\ref{eq:i1}), (\ref{eq:i2}).  
Our approach is relatively close to that of \cite{FathiSiconolfi_04, 
FathiSiconolfi_05} in view of 
weak KAM theory.  The paper \cite{Serea_07} by O.-S. Serea deals with HJ equations 
on a convex domain with homogeneous 
Neumann condition in view of weak KAM theory. The requirements on the Lagrangian in 
\cite{Serea_07} (see the conditions (7)--(10)) seem very restrictive. On the other hand, 
no regularity on the domain other than the convexity is posed in \cite{Serea_07}.  
In some special cases, the state-constraint problem for (\ref{eq:i1}) is equivalent to   
the Neumann type problem (\ref{eq:i1}), (\ref{eq:i2}), and thus some results in 
\cite{IshiiMitake_07} are related to those obtained here. For this equivalence, we refer 
for instance to \cite{CapuzzoLions_90}.

This paper is organized as follows. In the next section, we establish the 
stability under the formations of infimum and of convex combinations of subsolutions 
of (\ref{eq:i1}), (\ref{eq:i2}) and of (\ref{eq:i3})--(\ref{eq:i5}).  
In Section 3 we establish comparison results for sub and supersolutions of 
(\ref{eq:i1}), (\ref{eq:i2}) and of (\ref{eq:i3})--(\ref{eq:i5}). 
Section 4 is devoted to the Skorokhod problem in $\ol\gO$ with 
reflection direction $\gg$, which is essential to formulate 
variational representations for solutions of (\ref{eq:i1}), (\ref{eq:i2}) and of (\ref{eq:i3})--(\ref{eq:i5}), and we 
establish results concerning existence and stability of solutions of the 
Skorokhod problem. 
In Section 5, we prove the existence of a solution of the initial-boundary value 
problem (\ref{eq:i3})--(\ref{eq:i5}) 
as well as a variational formula for the solution.        
In Section 6, we introduce the critical value and the 
Aubry-Mather set associated with (\ref{eq:i1}), (\ref{eq:i2}), 
study basic properties of the Aubry-Mather set and establish representation      
formulas, based on the Aubry-Mather 
set, for solutions of (\ref{eq:i1}), (\ref{eq:i2}).  
In Section 7 we establish the existence of ``calibrated'' extremals for the variational problem
associated with (\ref{eq:i1}), (\ref{eq:i2}).

{\em Notation: }Let $e_i$, with $i=1,2,...,n$, denote the unit vector of $\R^n$ having 
unity as its $i$th coordinate. We $a\wedge b$ and 
$a\vee b$ for $\min\{a,\,b\}$ and $\max\{a,\,b\}$, respectively. 
For $A\subset\R^n$, $\Lip(A,\,\R^m)$ (resp., $\BUC(A,\,\R^m)$ and $\UC(A,\,\R^m)$) 
denotes the space of 
Lipschitz continuous (resp, bounded uniformly continuous ans uniformly continuous) 
functions on $A$ with values in $\R^m$. For brevity, we may write 
$\Lip(A)$, $\BUC(A)$ and $\UC(A)$ for 
$\Lip(A,\,\R^m)$, $\BUC(A,\,\R^m)$ and $\UC(A,\,\R^m)$, respectively. 
We write $A^c$ to denote the complement of $A$. 
For given function $g$ on $A$ with values in $\R^m$, we write 
$\|g\|_\infty=\sup_{x\in A} |g(x)|$. For an interval $I$, we denote by $\AC(I)$ or $\AC(I,\R^n)$ 
the space of absolutely continuous functions on $I$ with values in $\R^n$. 
For given function $w: A\to\R$ $w^*$ and $w_*$  
denote respectively the upper and lower semicontinuous 
envelopes of $w$ defined on $\ol Q$.  Regarding the definition of (viscosity) solutions, 
we adopt the following convention: for instance, we consider  (\ref{eq:i1}), (\ref{eq:i2}). 
a function $u : \ol\gO\to\R$ is a subsolution (resp., a supersolution) 
provided that $u$ is bounded above (resp., bounded below) and 
whenever $(x,\,\phi)\in\ol\gO\tim C^1(\ol\gO)$ and $u^*-\phi$ 
(resp., $u_*-\phi$) attains a maximum (resp., a minimum)   
at $x$, $H(x,\,D\phi(x))\leq a$ (resp., $\geq a$) if $x\in\gO$ and either 
$H(x,\,D\phi(x))\leq a$ (resp., $\geq a$) or $D_\gg\phi(x)\leq g(x)$ (resp., $\geq g(x)$) 
if $x\in\gG$. 
A bounded function 
$u : \ol\gO\to\R$ is a solution if it is both a subsolution and a supersolution.        
In a more general situation where a candidate of solutions, $u$, is defined on a set which is not necessarily compact, the requirement on $u$ regarding the boundedness 
to be a solution (resp., subsolution or supersolution) is that it is locally 
bounded (resp., locally bounded above or locally bounded below).

\section{Basic propositions on convex HJ equations}

In this section we establish the stability of the operations of infimum and of convex 
combinations subsolutions of 
convex HJ equations. 
We remark that these stability properties, without boundary condition, 
is the main technical observations in the theory 
of lower semicontinuous viscosity solutions due to  Barron-Jensen \cite{BarronJensen_90}.  

To localize problems (\ref{eq:i1}), (\ref{eq:i2}), 
or (\ref{eq:i3})--(\ref{eq:i5}), 
let $U$ be an open subset of $\R^n$ and set 
$\gO_U=U\cap \gO$, $\gG_U=U\cap \gG$ and $\gS:=\gO_U\cup\gG_U=U\cap\ol\gO$.

\subsection{Propositions without the coercivity assumption}
In this subsection we {\em do not assume} the coercivity of $H$. That is, in this subsection 
we assume only (A1) and (A3). Let $f\in C(\gS)$. 
We consider the HJ equation
\begin{equation}\label{eq:b1}
\left\{
\begin{aligned}
&H(x,Du)=f(x) \ \ \hb{ in }\gO_U, \\ 
&D_\gg u(x)=g(x) \ \ \hb{ on }\gG_U, 
\end{aligned}
\right.
\end{equation}
and establish the following theorems. 

\begin{thm}\label{thm:b1} 
Let $\cS\subset \Lip(\gS)$ 
be a nonempty family of subsolutions of {\em (\ref{eq:b1})}. Set 
\[
u(x)=\inf\{v(x)\mid v\in\cS\} \ \ \hb{ for }x\in\gS
\]
and assume that $u\in C(\gS)$. Then $u$ is a 
subsolution of {\em (\ref{eq:b1})}.
\end{thm}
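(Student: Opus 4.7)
The strategy is to replace the family $\mathcal{S}$ by a single decreasing sequence in $\mathcal{S}$ converging to $u$ uniformly on compact subsets of $\Sigma$, and then to apply the standard viscosity stability argument.

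First I would close $\mathcal{S}$ under finite pointwise minima. This is permissible: the minimum of finitely many elements of $\Lip(\Sigma)$ is itself in $\Lip(\Sigma)$, and by a classical stability property (the minimum of finitely many continuous viscosity subsolutions of (\ref{eq:b1}) is again a subsolution, proved by direct testing and not requiring convexity) the enlarged family still consists of subsolutions. The infimum $u$ is unchanged, and $\mathcal{S}$ becomes downward directed. Using the continuity of $u$, I then produce a decreasing sequence $\{W_n\} \subset \mathcal{S}$ converging to $u$ uniformly on compacts. Fix a compact $K \subset \Sigma$ and $\varepsilon > 0$: for each $x \in K$ pick $v_x \in \mathcal{S}$ with $v_x(x) < u(x) + \varepsilon/3$, and use joint continuity of $v_x$ and $u$ to find a neighborhood $N_x$ of $x$ on which $v_x < u + \varepsilon$. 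Extracting a finite subcover $\{N_{x_j}\}$ of $K$ and forming $V := \bigwedge_j v_{x_j} \in \mathcal{S}$ yields $u \leq V \leq u + \varepsilon$ on $K$. Running this procedure along an exhaustion $K_n \uparrow \Sigma$ with $\varepsilon_n \downarrow 0$ and forming successive minima in $\mathcal{S}$ produces the required $\{W_n\}$.

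Given a test pair $(\phi, x_0) \in C^1(\Sigma) \times \Sigma$ with $u - \phi$ attaining a local maximum at $x_0$, I would perturb $\phi$ by $|x - x_0|^2$ to make the maximum strict on some compact $K = \overline{B}(x_0, r) \cap \Sigma$. Uniform convergence of $W_n - \phi$ to $u - \phi$ on $K$ forces any maximizer $y_n$ of $W_n - \phi$ on $K$ to satisfy $y_n \to x_0$, and for $n$ large the point $y_n$ lies in the relative interior of $K$ in $\Sigma$, so the subsolution property of $W_n$ applies locally at $y_n$. Extracting a subsequence along which a single alternative of the subsolution definition (the inequality $H(y_n, D\phi(y_n)) \leq f(y_n)$, or, when $y_n \in \Gamma_U$, the boundary inequality $D_\gamma \phi(y_n) \leq g(y_n)$) persists for every $n$ and passing to the limit using continuity of $H$, $D\phi$, $f$, $g$, and $\gamma$ yields the required subsolution inequality at $x_0$, covering both the case $x_0 \in \Omega_U$ and the case $x_0 \in \Gamma_U$.

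The main technical obstacle is the second step above: since coercivity of $H$ is not assumed in this subsection, elements of $\mathcal{S}$ need not be uniformly Lipschitz, which rules out a naive penalization argument trying to locate, for each approximating $v_k$, a nearby maximum of $v_k - \phi$ with controlled test gradient. The directed-net construction via closure under finite minima converts the infimum problem into a uniform-approximation problem on compacts, after which the non-uniform Lipschitz constants become irrelevant and the final passage to the limit is standard.
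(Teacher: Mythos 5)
Your reduction of the infimum to a decreasing sequence (closing the family under finite minima, using continuity of $u$ and a compact exhaustion) mirrors the first step of the paper's proof and is fine, as is the final passage to the limit under local uniform convergence. But the proposal collapses at its central claim: that ``the minimum of finitely many continuous viscosity subsolutions of (\ref{eq:b1}) is again a subsolution, proved by direct testing and not requiring convexity.'' This is false. The elementary direct-testing argument works for the \emph{maximum} of subsolutions (or the minimum of \emph{supersolutions}): if $u=u_1\wedge u_2$, $u-\phi$ has a maximum at $x_0$ and $u(x_0)=u_1(x_0)$, then $u_1\geq u$ near $x_0$ with equality at $x_0$, so one cannot conclude that $u_1-\phi$ has a maximum at $x_0$; the inequality points the wrong way. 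Indeed the statement fails without convexity: in one dimension, with $H(p)=1-|p|$, the functions $u_1(x)=x$ and $u_2(x)=-x$ are classical solutions of $H(u')=0$, but $u_1\wedge u_2=-|x|$ is not a viscosity subsolution at $0$ (test with $\phi\equiv 0$). So the fact you invoke as a classical lemma is precisely the convexity-dependent content of the theorem (it is the Barron--Jensen phenomenon), and assuming it makes the argument circular.

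What is missing, and what the paper actually does, is the proof that for convex $H$ the minimum of two Lipschitz subsolutions is a subsolution \emph{up to the oblique boundary}. Even granting convexity, the interior device (a.e.\ characterization or sup-convolution with kernel $|x-y|^2/\delta$) does not by itself respect the Neumann condition $D_\gg u\leq g$. The paper first creates room in the boundary condition by subtracting a function $\psi_\mu\in C^1(\ol\gO)$ with $g+\mu\leq D_\gg\psi_\mu\leq g+2\mu$, reducing to subsolutions of (\ref{eq:b16}) with boundary data $-\mu$; then (Lemmas \ref{thm:b3} and \ref{thm:b4}) it performs a sup-convolution with the anisotropic kernel $\gd^{-1}\gz(\gg^\gd(x),x-y)$ built from a mollified vector field $\gg^\gd$, chosen so that $\xi\cdot D_z\gz(\xi,z)=\xi_nz_n$; this makes the regularized functions $u^\gd$ approximate subsolutions in the interior \emph{and} satisfy $D^+_\gg u^\gd\leq\gep$ on the boundary. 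Only then can the minimum be taken (Lemma \ref{thm:b5}), using convexity for the interior inequality and the one-sided derivative bound on $\gG$, and the parameters $\gd,\gep,\mu$ sent to zero with the usual stability under uniform convergence. None of this analysis appears in your proposal, and without it (or a substitute argument exploiting convexity near the boundary) the proof does not go through.
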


\begin{thm}\label{thm:b2} 
For $k\in\N$ let $f_k\in C(\gS)$ and let $u_k\in \Lip(\gS)$ be a subsolution of {\em (\ref{eq:b1})}, 
with $f_k$ in place of $f$, and $\{\gl_k\}_{k\in\N}$ a sequence of nonnegative numbers such that 
$\sum_{k\in\N}\gl_k=1$. Assume that the sequences $\{u_k\}_{k\in\N}$ and $\{f_k\}_{k\in\N}$ are uniformly 
bounded on compact subsets of $\gS$. 
Set 
\[
u(x)=\sum_{k\in\N}\gl_ku_k(x) \ \ \hb{ and } \ \ f(x)=\sum_{k\in\N}\gl_kf_k(x) 
 \ \ \ \hb{ for }x\in\gS. 
\]
Then $u$ is a 
subsolution of {\em (\ref{eq:b1})}.  
\end{thm}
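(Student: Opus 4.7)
The plan is to reduce the infinite convex combination to the finite case by truncation and local uniform stability of viscosity subsolutions, and to handle the finite case by induction down to two summands together with a doubling-of-variables argument that exploits the convexity of $H$ and the linearity of $D_\gg$.

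I would first reduce the $N$-summand case to the $2$-summand case by induction, so it suffices to show: given $u_1,u_2\in\Lip(\gS)$ subsolutions of (\ref{eq:b1}) with right-hand sides $f_1,f_2$ and $\mu\in[0,1]$, the function $v:=\mu u_1+(1-\mu)u_2$ is a subsolution with right-hand side $\mu f_1+(1-\mu)f_2$. Fix $x_0\in\gS$ and $\phi\in C^1(\ol\gO)$ such that $v-\phi$ has a strict local maximum at $x_0$, and introduce, for $\gep>0$ small and on a compact neighbourhood of $(x_0,x_0)$, the auxiliary function
\[
\Phi_\gep(x,y) \;=\; \mu u_1(x)+(1-\mu)u_2(y) \;-\; \phi\bigl(\mu x+(1-\mu)y\bigr) \;-\; \frac{1}{2\gep}|x-y|^2.
\]
Let $(x_\gep,y_\gep)$ be a maximiser. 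Standard doubling arguments yield $(x_\gep,y_\gep)\to(x_0,x_0)$, and the Lipschitz bounds on $u_1,u_2$ show that $\gep^{-1}(x_\gep-y_\gep)$ is bounded in $\R^n$, hence converges along a subsequence to some $q$. Freezing one variable in $\Phi_\gep$ gives $C^1$ test functions for $u_1$ at $x_\gep$ and for $u_2$ at $y_\gep$ whose gradients pass in the limit to $p^1:=D\phi(x_0)+q/\mu$ and $p^2:=D\phi(x_0)-q/(1-\mu)$, with the key identity $\mu p^1+(1-\mu)p^2=D\phi(x_0)$. In the purely interior case this immediately yields $H(x_0,D\phi(x_0))\le\mu f_1(x_0)+(1-\mu)f_2(x_0)$ by convexity of $H$ in $p$.

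The main obstacle is the boundary case $x_0\in\gG_U$, where the disjunctive Neumann alternative enters. If both $u_1,u_2$ select the Hamilton-Jacobi alternative at $(x_\gep,y_\gep)$, the interior argument carries over; if both select the Neumann alternative $D_\gg\cdot\le g$, then the linearity of $D_\gg$ combined with $\mu p^1+(1-\mu)p^2=D\phi(x_0)$ and the continuity of $\gg$ and $g$ gives $D_\gg\phi(x_0)\le g(x_0)$ in the limit. The truly delicate situations are the mixed ones, in which one summand selects each alternative; I would resolve them by perturbing the penalty in $\Phi_\gep$ by a small drift along $\gg(x_0)$, taking advantage of the obliqueness assumption (A3) to force one of the maximisers to lie in $\gO_U$ and thereby reducing to a symmetric case.

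For the infinite convex combination, let $\gL_N:=\sum_{k=1}^N\gl_k$ and set
\[
w_N(x)\;:=\;\sum_{k=1}^N\gl_k u_k(x)+(1-\gL_N)u_1(x),\qquad h_N(x)\;:=\;\sum_{k=1}^N\gl_k f_k(x)+(1-\gL_N)f_1(x).
\]
Each $w_N$ is a true finite convex combination, hence a subsolution of (\ref{eq:b1}) with right-hand side $h_N$ by the finite case. The local uniform boundedness of $\{u_k\}$ and $\{f_k\}$, together with $\gL_N\to1$, forces $w_N\to u$ and $h_N\to f$ locally uniformly on $\gS$, and the standard stability of viscosity subsolutions under locally uniform convergence of both the solution and the right-hand side delivers the conclusion.
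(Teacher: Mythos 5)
Your reductions (countable $\to$ finite, and $N$-summand $\to$ $2$-summand by induction) are sound and run parallel to the paper's strategy, which also passes through a two-function lemma (Lemma~\ref{thm:b6}). The pure-interior step and the case where both functions select the same boundary alternative are also handled correctly: convexity of $H(x,\cdot)$ deals with the Hamilton--Jacobi alternative, and linearity of $p\mapsto\gg\cdot p$ deals with the Neumann alternative. The problem is exactly where you flag it, and the fix you sketch does not close it.

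In the mixed case you end up with, in the limit, $H(x_0,p^1)\le f_1(x_0)$ for one function and $\gg(x_0)\cdot p^2\le g(x_0)$ for the other, with $\mu p^1+(1-\mu)p^2=D\phi(x_0)$. Neither desired conclusion follows: $\gg(x_0)\cdot D\phi(x_0)=\mu\,\gg(x_0)\cdot p^1+(1-\mu)\,\gg(x_0)\cdot p^2$ is not controlled because you have no bound on $\gg(x_0)\cdot p^1$, and $H(x_0,D\phi(x_0))\le\mu H(x_0,p^1)+(1-\mu)H(x_0,p^2)$ is not controlled because you have no bound on $H(x_0,p^2)$. The quadratic Euclidean penalty $|x-y|^2/(2\gep)$ gives you no control over the sign of $\gg(x_\gep)\cdot(x_\gep-y_\gep)$ or $\gg(y_\gep)\cdot(y_\gep-x_\gep)$, so you cannot rule out either boundary alternative for either function; the ``small drift along $\gg(x_0)$'' is a gesture in the right direction but, as written, makes $\gg(x_\gep)\cdot q_\gep$ no more tractable, and certainly cannot simultaneously force both test gradients to violate the Neumann alternative strictly. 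This is precisely the obstruction that Euclidean doubling is known to have for oblique Neumann problems on non-convex $C^1$ domains, and it is the reason the paper does not use it.

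The paper instead builds a sup-convolution with the oblique kernel $\psi^\gd(x,y)=\gz(\gg^\gd(x),x-y)$ from Lemma~\ref{thm:b3}, whose defining property $\xi\cdot D_z\gz(\xi,z)=\xi_n z_n$ gives $\gg^\gd(x)\cdot D_y\psi^\gd(x,y)=-\gg_n^\gd(x)(x_n-y_n)\ge 0$ when the sup-convolution maximiser $y$ sits on the boundary; this sign forces the Hamilton--Jacobi alternative in the subsolution inequality and simultaneously yields the one-sided bound $D_\gg^+u^\gd\le\gep$ (Lemma~\ref{thm:b4}). Once each $u_i^\gd$ is a genuine subsolution of $H\le\gep$ in the interior \emph{and} satisfies $D_\gg^+u_i^\gd\le\gep$ as a pointwise one-sided derivative (not a viscosity alternative), there is no case analysis left: convexity of $H$ handles the interior and subadditivity of $D_\gg^+$ handles the boundary, and one then removes $\gep$, $\gd$, and the auxiliary strictness parameter $\mu$. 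Your proposal, as it stands, skips exactly this construction, and without a substitute for it the two-summand boundary case is unproved.
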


Before going into the proof of the above two theorems, we give two remarks. 
(i) If $V$ is an open subset of $\R^n$ satisfying $V\cap\ol\gO\subset U$ and 
$u$ is a subsolution (resp., a supersolution) of (\ref{eq:b1}), then 
$u$ is a subsolution (resp., a supersolution) of (\ref{eq:b1}), with $V$ in place of $U$.
(ii) If $U_\ga$ are open subsets of $\R^n$ for $\ga\in\gL$, where $\gL$ is an index set, and 
the inclusion
\[
\ol\gO\subset \bigcup_{\ga\in\gL} U_\ga
\]
holds and $u:\ol\gO\to\R$ is a subsolution of (\ref{eq:b1}), with $U:=U_\ga$, for any $\ga\in\gL$, 
then $u$ is a subsolution (resp., a supersolution) of (\ref{eq:b1}), with $\gO$ and $\gG$ in place of $\gO_U$ 
and $\gG_U$.

In the rest of this subsection we are devoted to proving Theorems \ref{thm:b1} and \ref{thm:b2}. 
It is well-known (see for instance \cite{BarronJensen_90, FathiSiconolfi_04})  
that, if $\gG_U=\emptyset$, 
the assertions of Theorems \ref{thm:b1} and \ref{thm:b2} are valid.  
Thus, in order to prove the above two theorems, because of their local property 
together with the $C^1$ regularity of $\gO$, we
may assume by use of a $C^1$ change of variables that for some constant $r>0$, 
\begin{equation}\label{eq:b2}
U=\Int\, B(0,r), \ \ \gO_U=\{(x',x_n)\in U\mid x_n<0\}, \ \   
\gG_U=\{x=(x',x_n)\in U\mid x_n=0\}.
\end{equation}
Here and later, for $x=(x_1,...,x_n)\in\R^n$, 
we put $x'=(x_1,...,x_{n-1})$ and $x=(x',x_n)$.   

We set $\R^n_+=\R^{n-1}\tim(0,\,\infty)$ and 
define the function $\gz\in C^\infty(\R^n_+\tim\R^n)$ by
$$
\gz(y,z)= 
\fr 12\Big|z-\fr{z\cdot e_n}{y\cdot e_n}y\Big|^2+\fr 12(z\cdot e_n)^2. 
$$
We write $D_{z'}$ for the gradient operator with respect to the variables 
$z'=(z_1,...,z_{n-1})$. For instance, we write 
$D_{z'}\gz=(\gz_{z_1},...,\gz_{z_{n-1}})$.  

\begin{lem}\label{thm:b3}The function 
$\gz\in C^\infty(\R^n_+\tim\R^n)$
has the properties: 
\[\left\{
\begin{aligned}
&\gz(\xi,tz)= t^2\gz(\xi,z) \ \ &&\hb{ for }(\xi,z,t)\in\R^n_+\tim\R^n\tim\R,\\
&\gz(\xi,z)> 0 \ \ &&\hb{ for }(\xi,z)\in\R^N_+\tim(\R^n\setminus\{0\}),\\
&\xi\cdot D_z\gz(\xi,z)=\xi_n z_n \ \ &&\hb{ for }(\xi,z)\in\R^n_+\tim\R^n.   
\end{aligned}
\right.
\]  
\end{lem}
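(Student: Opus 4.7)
The plan is to verify each of the three properties by direct inspection of the formula for $\gz$. Property 1 (degree-$2$ homogeneity in $z$) is immediate from the expression: replacing $z$ by $tz$ multiplies the vector $z-\fr{z\cdot e_n}{y\cdot e_n}y$ by $t$ and $(z\cdot e_n)^2$ by $t^2$, so each summand scales by $t^2$.

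For property 2, I would observe that $\gz(\xi,z)$ is a sum of two non-negative quantities. If $\gz(\xi,z)=0$, then vanishing of the second summand forces $z\cdot e_n=0$, and substituting $z_n=0$ into the first reduces it to $\fr12|z|^2$, whence $z=0$.

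The real content is property 3. I would introduce the linear map $\pi_\xi:\R^n\to\R^n$ defined by $\pi_\xi z= z-\fr{z\cdot e_n}{\xi\cdot e_n}\xi$; geometrically this is the projection onto the hyperplane $\{w\cdot e_n=0\}$ along the direction $\xi$, and in particular $\pi_\xi\xi=0$. With this notation $\gz(\xi,z)=\fr12|\pi_\xi z|^2+\fr12(z\cdot e_n)^2$, so by the chain rule (using linearity of $\pi_\xi$ in $z$),
\[
\xi\cdot D_z\gz(\xi,z)=(\pi_\xi z)\cdot(\pi_\xi\xi)+(z\cdot e_n)(\xi\cdot e_n).
\]
The first term vanishes because $\pi_\xi\xi=0$, and the second equals $\xi_n z_n$, which is property 3.

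There is no serious obstacle here: the lemma is a bookkeeping exercise, and its only conceptual content is the identity $\pi_\xi\xi=0$, which explains why the Euler-type relation yields the cross term $\xi_n z_n$ rather than the straight scaling $2\gz$ predicted by homogeneity. As an alternative I could compute each partial derivative $\partial_{z_i}\gz$ coordinate-wise and sum $\sum_i\xi_i\partial_{z_i}\gz$ directly, which is more tedious but equally routine.
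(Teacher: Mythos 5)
Your proof is correct, and for properties 1 and 2 it coincides with the paper's, which simply reads off homogeneity and positivity from the formula. For property 3 the paper computes $D_z\gz$ in full coordinates and then contracts against $\xi$ to watch the cross-terms cancel; you instead write $\gz(\xi,z)=\tfrac12|\pi_\xi z|^2+\tfrac12(z\cdot e_n)^2$ with $\pi_\xi$ the linear projection onto $\{w_n=0\}$ along $\xi$, use $\xi\cdot(\pi_\xi^{\mathrm T}\pi_\xi z)=(\pi_\xi\xi)\cdot(\pi_\xi z)$, and invoke $\pi_\xi\xi=0$. This is the same identity, but your version isolates the one structural fact ($\pi_\xi$ annihilates $\xi$) responsible for the cancellation, whereas the paper's computation leaves that cancellation looking like a coincidence of terms; your route is therefore slightly more transparent and less error-prone, at the cost of introducing the auxiliary map $\pi_\xi$. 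Either is perfectly adequate for this bookkeeping lemma.
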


\begin{proof} We observe that 
\begin{align*}
D_z\gz(\xi,z)=&\, z-\fr{z_n}{\xi_n}\xi-\fr{z\cdot \xi }{\xi_n}e_n
+\fr{|\xi|^2z_n}{\xi_n^2}e_n+z_ne_n, \\
\noalign{\noindent and }
\xi \cdot D_z\gz(\xi,z)=&\, \xi_n z_n.
\end{align*}
It is now obvious that the function $\gz$ has all the required properties. 
\end{proof}

We note by the homogeneity of the functions $\gz(\xi,\,\cdot)$ that 
\begin{equation}\label{eq:b+1}
C_0^{-1}|z|^2\leq \gz(\xi,z)\leq C_0|z|^2, \qquad |D_\xi\gz(\xi,z)|  
\leq C_0|z|^2,  \qquad  
|D_z\gz(\xi,z)|\leq C_0|z|
\end{equation}
for all $(\xi,z)\in\R^n_+\tim\R^n$ and for some constant $1<C_0<\infty$.  

By assumption (A3) and (\ref{eq:b2}), we have $\inf_{x\in\gG_U}\gg(x)\cdot e_n>0$.   
We restrict the domain of definition of $\gg$ to $\gG_U$ and then 
extend that of the resulting vector field to $\R^n$ so that 
$\gg\in \BUC(\R^n,\R^n)$ and $\gg_0^{-1} \leq \gg\cdot e_n 
\leq |\gg|\leq \gg_0$ on $\R^n$ for some constant $\gg_0>1$.  Let $\go$ be the modulus of continuity of $\gg$.  

By mollification, we may choose a family of functions 
$\{\gg^\gd\}_{\gd\in(0,\,1)}\subset C^\infty(\R^n,\,\R^n)$ so that 
$|\gg(x)-\gg^\gd(x)|\leq\go(\gd)$, 
$|\gg^\gd(x)-\gg^\gd(y)|\leq\go(|x-y|)$ 
and $|D\gg^\gd(x)|\leq C_1\go(\gd)/\gd$   
for all $x,y\in\R^n$ and $\gd\in(0,\,1)$ and for some constant  
$C_1>1$.  Here $|A|:=\max\{ |A\xi| \,\mid\,\xi\in\R^n,\,|\xi|\leq 1\}$ for $n\tim n$ 
real matrix $A$. We may also assume that  
$\gg_0^{-1}\leq \gg^\gd\cdot e_n \leq|\gg^\gd|\leq \gg_0$ on $\R^n$.

For $\gd\in(0,\,1)$ we set  
$\psi^\gd(x,y)=\gz(\gg^\gd(x),x-y)$ and note that
\begin{align*}
D_x\psi^\gd(x,y)=&\,(D\gg^\gd(x))^{{\rm T}}D_\xi\gz(\gg^\gd(x),x-y)
+D_z\gz(\gg^\gd(x),x-y),\\
D_y\psi^\gd(x,y)=&\,-D_z\gz(\gg^\gd(x),x-y),
\end{align*}
where $A^{\rm T}$ denotes the transposed matrix of the matrix $A$. From these we get
\begin{equation}
|D_x\psi^\gd(x,y)+D_y\psi^\gd(x,y)|=|(D\gg^\gd(x))^{{\rm T}}D_\xi\gz(\gg^\gd(x),x-y)|
\leq \fr{C_0C_1\go(\gd)|x-y|^2}{\gd}. \label{eq:b3}
\end{equation}

Given a bounded function $u$ on 
$\ol\gS$, for $\gd>0$ let $u^\gd\in C(\R^n)$ denote the sup-convolution of $u$ 
with kernel function $\gd^{-1}\psi^\gd$, i.e., 
\[
u^\gd(x)=\sup_{y\in\ol\gS}\left(u(y)-\fr{1}{\gd}\psi^\gd(x,y)\right). 
\]

For $s\in(0,\,r]$ we set
\begin{equation} \label{eq:b3+}\left\{
\begin{aligned}
&\gO_s=\{x=(x_1,...,x_n)\in \Int\, B(0,\,s)\mid x_n<0\},\\
&\gG_s=\{x=(x_1,...,x_n)\in \Int\, B(0,\,s)\mid x_n=0\}.
\end{aligned}
\right.
\end{equation} 
In particular, we have $\gO_U=\gO_r$, $\gG_U=\gG_r$, $\gS=\gO_r\cup\gG_r$ and $\ol\gS=\ol\gO_r$.

\begin{lem}\label{thm:b4}
Let $\mu>0$ and $0<\gep<r$.   Let $u\in \Lip(\gS)$ be 
a viscosity subsolution of {\em(\ref{eq:b1})}, with $f:=0$ and $g:=-\mu$. 
Then there is 
a constant $\gd_0>0$, independent of $u$, such that if $0<\gd<\gd_0$, then 
$v:=u^\gd$ is a viscosity subsolution of 
\begin{equation}
H(x,Dv(x))=\gep \ \ \hb{ in }\gO_{r-\gep}. \label{eq:b5} 
\end{equation}
Moreover, if $0<\gd<\gd_0$, then 
\begin{equation}
D_\gg^+ u^\gd(x)\leq \gep \ \ \hb{ for }x\in\gG_{r-\gep}, \label{eq:b6}
\end{equation}
where 
\[
D_\gg^+v(x):=\limsup_{t\to 0+}\fr{v(x)-v(x-t\gg(x))}{t}. 
\]
\end{lem}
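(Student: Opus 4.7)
The plan is to implement the sup-convolution argument, adapted to the oblique boundary condition via the carefully designed kernel $\psi^\gd$. Since $u$ is Lipschitz and bounded and $\psi^\gd\geq 0$ with $\psi^\gd(x,y)\geq C_0^{-1}|x-y|^2$ by (\ref{eq:b+1}), the supremum defining $u^\gd(x)$ is attained at some $y_\gd(x)\in\ol\gS$; comparing $u^\gd(x)\geq u(x)$ with the lower bound on $\psi^\gd$ yields the standard estimate $|x-y_\gd(x)|\leq C_0\Lip(u)\,\gd$. I choose $\gd_0>0$ so that $C_0\Lip(u)\gd_0<\gep$; then for $x\in\gS\cap B(0,r-\gep)$ the optimizer $y_\gd(x)$ lies in $\gS$ and stays away from the curved part of $\pl B(0,r)$, so that only the subsolution property of $u$ on $\gO_U\cup\gG_U$ is ever invoked.

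For the interior inequality (\ref{eq:b5}), suppose $u^\gd-\phi$ has a local max at $x_0\in\gO_{r-\gep}$ with $\phi\in C^1$, and set $y_0:=y_\gd(x_0)$. Then $y\mapsto u(y)-\fr1\gd\psi^\gd(x_0,y)$ attains its max over $\ol\gS$ at $y_0$ by definition, while the local max at $x_0$ combined with $u^\gd(x)\geq u(y_0)-\fr1\gd\psi^\gd(x,y_0)$ gives $D\phi(x_0)=-\fr1\gd D_x\psi^\gd(x_0,y_0)$; together with (\ref{eq:b3}) this shows $D\phi(x_0)=\fr1\gd D_y\psi^\gd(x_0,y_0)+O(\go(\gd))$, a bounded vector since $|D_y\psi^\gd(x_0,y_0)|\leq C_0|x_0-y_0|=O(\gd)$. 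If $y_0\in\gO_U$, the subsolution inequality for $u$ transfers by joint continuity of $H$ to $H(x_0,D\phi(x_0))\leq\gep$ for $\gd$ small. The delicate case---and the main obstacle of the proof---is $y_0\in\gG_U$, where the alternative $D_\gg\tilde\phi(y_0)\leq -\mu$ with $\tilde\phi(y):=\fr1\gd\psi^\gd(x_0,y)$ must be ruled out: by Lemma \ref{thm:b3} one has $\gg^\gd(x_0)\cdot D_z\gz(\gg^\gd(x_0),x_0-y_0)=\gg^\gd(x_0)_n(x_0)_n$ (using $(y_0)_n=0$), and since $(x_0)_n<0$ together with $\gg^\gd(x_0)\cdot e_n\geq\gg_0^{-1}$, the quantity $\fr1\gd\gg(y_0)\cdot D_y\psi^\gd(x_0,y_0)$ coincides with the strictly positive number $-\fr1\gd\gg^\gd(x_0)_n(x_0)_n$ modulo an $O(\go(\gd))$ error from $|\gg(y_0)-\gg^\gd(x_0)|\to 0$, contradicting $\leq-\mu<0$ once $\gd$ is small enough. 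Thus the argument hinges on the precise identity $\xi\cdot D_z\gz(\xi,z)=\xi_n z_n$ to extract sign information from $(x_0)_n<0$.

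For the boundary derivative bound (\ref{eq:b6}), at $x_0\in\gG_{r-\gep}$ I substitute the suboptimal choice $y=y_0:=y_\gd(x_0)$ into the definition of $u^\gd(x_0-t\gg(x_0))$ to obtain
\[
u^\gd(x_0)-u^\gd(x_0-t\gg(x_0))\leq\tfrac1\gd\bigl[\psi^\gd(x_0-t\gg(x_0),y_0)-\psi^\gd(x_0,y_0)\bigr]
\]
for all small $t>0$. Dividing by $t$ and letting $t\to 0+$ yields $D_\gg^+u^\gd(x_0)\leq-\fr1\gd\gg(x_0)\cdot D_x\psi^\gd(x_0,y_0)$. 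The $(D\gg^\gd)^{\rm T}$ piece of $D_x\psi^\gd$ contributes only $O(\go(\gd))$ after the factor $\fr1\gd$, while the remaining term $-\fr1\gd\gg(x_0)\cdot D_z\gz(\gg^\gd(x_0),x_0-y_0)$ equals, up to $O(\go(\gd))$, the quantity $-\fr1\gd\gg^\gd(x_0)_n(x_0-y_0)_n$; here $(x_0)_n=0$ together with $y_0\in\ol\gS$ forces $(x_0-y_0)_n=-(y_0)_n\geq 0$, making this main term $\leq 0$. Hence $D_\gg^+u^\gd(x_0)\leq O(\go(\gd))\leq\gep$ once $\gd_0$ is chosen sufficiently small.
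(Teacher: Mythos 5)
Your proposal is correct and follows essentially the same route as the paper: the same sup-convolution with kernel $\gd^{-1}\psi^\gd$, the estimate $|x-y_\gd(x)|\leq C_0\Lip(u)\gd$ to keep the maximizer inside $\gS$, the interior/boundary case split where the oblique alternative at $y_0\in\gG_U$ is ruled out via the identity $\xi\cdot D_z\gz(\xi,z)=\xi_n z_n$ and the sign of $(x_0)_n$, and the same decomposition of $D_x\psi^\gd$ (the $(D\gg^\gd)^{\rm T}D_\xi\gz$ piece being $O(\go(\gd))$ after division by $\gd$) to get $D_\gg^+u^\gd\leq\gep$ on $\gG_{r-\gep}$. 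The only differences are presentational, e.g.\ you bound $-\gg(x_0)\cdot D_x\psi^\gd$ by expanding $D_x\psi^\gd$ directly rather than passing through $D_y\psi^\gd$ as in (\ref{eq:b15}), which is equivalent.
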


\begin{proof} Let $0<\gd<1$. Let $R>0$ be a Lipschitz constant of $u$. 
We may assume by extending by continuity that $u\in \Lip(\ol\gS)$, so that 
for each $x\in\R^n$ there is a point $y\in\ol\gS$ such that 
\begin{equation}
u^\gd(x)=u(y)-\fr{1}{\gd}\psi^\gd(x,y). \label{eq:b7}
\end{equation}  

Fix $x\in\gO_{r-\gep}\cup\gG_{r-\gep}$ and $y\in\ol\gS$ so that (\ref{eq:b7}) holds. 
We collect here some basic estimates. 
As is standard, we have $u^\gd(x)\geq u(x)$ and 
\[
\fr{1}{\gd}\psi^\gd(x,y)=u(y)-u^\gd(x)\leq u(y)-u(x)\leq R|x-y|. 
\]
Noting by (\ref{eq:b+1}) that $\psi^\gd(x,y)\geq C_0^{-1}|x-y|^2$, we get
\begin{equation}
|x-y|\leq C_2\gd,  \label{eq:b8}
\end{equation}
where $C_2:=C_0R$.   
It follows from (\ref{eq:b3}) that 
\begin{equation}
|D_x\psi^\gd(x,y)+D_y\psi^\gd(x,y)|\leq C_3\go(\gd)\gd, \label{eq:b9}
\end{equation} 
where $C_3:=C_0C_1C_2^2$. By Lemma \ref{thm:b3}, we get
\begin{equation}
\gg^\gd(x)\cdot D_y\psi^\gd(x,y)=-\gg_n^\gd(x)(x_n-y_n). \label{eq:b10} 
\end{equation}  
Also, we get
\begin{align}
|D_y\psi^\gd(x,y)|\leq&\, C_0|x-y|\leq C_4\gd, \label{eq:b11}\\
|D_x\psi^\gd(x,y)|\leq&\, |D_y\psi^\gd(x,y)|+|D_x\psi^\gd(x,y)+D_y\psi^\gd(x,y)|
\leq C_4\gd,\label{eq:b12}
\end{align}
where $C_4:=C_0C_2+C_3\go(1)$.

We now show that $u^\gd$ is a subsolution of (\ref{eq:b5}) if $\gd>0$ is sufficiently small.  
Let $\phi\in C^1(\ol\gO_{r-\gep})$ and $x\in\gO_{r-\gep}$. We assume that 
$u^\gd-\phi$ attains a strict maximum at $x$, and choose a point $y\in\ol\gS=\ol\gO_r$ 
so that (\ref{eq:b7}) holds.  
We choose a constant $\gd_1\in(0,\,1)$ so that $C_2\gd_1<\gep$ and assume in what follows that 
$0<\gd<\gd_1$. By (\ref{eq:b8}), we have $|x-y|<\gep$. Hence, we have 
$\pl\gO_r\setminus \gG_r$. Since $y\in\ol\gO_r$, we have 
two possibilities: $y\in\gO_r$ or $y\in\gG_r$.

Now we treat the case where $y\in\gO_r$.  
Then we have 
\[
D\phi(x)\in D^+u^\gd(x), \ \ 
D\phi(x)+\fr{1}{\gd}D_x\psi^\gd(x,y)=0 \ \ 
\hb{ and } \ \ 
\fr{1}{\gd}D_y\psi^\gd(x,y)\in D^+u(y),
\] 
where $D^+h(x)$ denotes the superdifferential of the function $h$ at $x$.  
Using this last inclusion, we get
$H(y,D_y\psi^\gd(x,y)/\gd)\leq 0$.  
According to (\ref{eq:b11}) and (\ref{eq:b12}), we have $|D_y\psi^\gd(x,y)|/\gd\leq C_4$ and $|D\phi(x)|
=|D_x\psi^\gd(x,y)|/\gd\leq C_4$. 
Let $\go_H$ denote the modulus of continuity of the function $H$ restricted to 
$\ol\gO\tim B(0,\,C_4)$.  Using (\ref{eq:b9}) and (\ref{eq:b8}), we obtain  
\begin{align*}
0\geq&\, H\left(y,\,\fr 1\gd D_y\psi^\gd(x,y)\right)
\geq H(x,D\phi(x))-\go_H(|x-y|)-\go_H(C_3\go(\gd)) \\ 
\geq &\, H(x,\,D\phi(x))-\go_H\left(C_2\gd\right)-\go_H(C_3\go(\gd)).  
\end{align*} 
We choose a $\gd_2>0$ so that 
\[
\go_H\left(C_2\gd_2\right)+\go_H(C_3\go(\gd_2))\leq \gep.
\]
Thus, if $y\in\gO_r$ and $0<\gd <\gd_1\wedge\gd_2$, then we have
\begin{equation}
H(x,\,D\phi(x))\leq \gep. \label{eq:b13}
\end{equation} 

Next, we turn to the case where $y\in\gG_r$. Then we have 
\[
D\phi(x)=-\fr 1\gd D_x\psi^\gd(x,y)\in D^+u^\gd(x) \ \ \hb{ and } \ \ 
\fr 1\gd D_y\psi^\gd(x,y)\in D^+_{\gS}u(y),
\]
where $D_{\gS}^+u(y)$ denotes the set of those $p\in\R^n$ for which 
\[
u(y+\xi)\leq u(y)+p\cdot \xi +o(|\xi|)  \ \ \hb{ as } y+\xi\in\gS \ \hb{ and } \ \xi\to 0. 
\]
By (\ref{eq:b10}), we get 
\[
\gg^\gd(x)\cdot D_y\psi^\gd(x,y)=-\gg_n(x)(x_n-y_n)=-\gg_n(x)x_n>0. 
\] 
Since $|D_y\psi^\gd(x,y)|/\gd\leq C_4$ by (\ref{eq:b11}), 
we get
\begin{align*}
\gg(y)\cdot \fr 1\gd D_y\psi^\gd(x,y)
=&\, \gg^\gd(x)\cdot\fr 1\gd D_y\psi^\gd(x,y)+\left(\gg(y)-\gg^\gd(x)\right)
\cdot\fr 1\gd D_y\psi^\gd(x,y)\cr
>&\,-C_4\left(\go(|x-y|)+\go(\gd)\right)\geq -C_4\left(\go(C_2\gd)+\go(\gd)\right). 
\end{align*}
We select a $\gd_3>0$ so that $
C_4\left(\go(C_2\gd_3)+\go(\gd_3)\right)<\mu$, and 
assume in the following that $0<\gd < \gd_1\wedge \gd_3$. 
Accordingly, we have $\gg(y)\cdot \fr 1\gd D_y\psi^\gd(x,y)>-\mu$. 
Since $u$ is a viscosity subsolution of (\ref{eq:b1}), with $f:=0$ and $g:=-\mu$, 
we get $H\left(y,\,D_y\psi^\gd(x,y)/\gd\right)\leq 0$. Now, as in the previous case, we obtain 
\[
0\geq H\left(x,\,D\phi(x)\right)-\go_H(C_2\gd)-\go_H(C_3\gd). 
\]
Consequently, if $y\in\pl\gO_r$ and 
$0<\gd<\gd_1\wedge \gd_2\wedge \gd_3$, then we have (\ref{eq:b13}). 
Thus we see that if $0<\gd<\gd_1\wedge\gd_2\wedge\gd_3$, 
then $u^\gd$ is a subsolution of (\ref{eq:b5}).

We now prove that (\ref{eq:b6}) is valid if $\gd$ is sufficiently small.  
Let $x\in\gG_{r-\gep}$, 
and ;choose a $y\in\ol\gS$ so that (\ref{eq:b7}) holds. 
Then, for $t>0$ sufficiently small, we have
\[
u^\gd(x)-
u^\gd(x-t\gg(x))\leq -\fr{1}{\gd}\left(\psi^\gd(x,y)-\psi^\gd(x-t\gg(x),y)\right).
\]  
Hence, 
\begin{equation}
D_\gg^+ u^\gd(x)\leq -\gg(x)\cdot \fr 1\gd D_x\psi^\gd(x,y). \label{eq:b14}
\end{equation} 
Using (\ref{eq:b11}), (\ref{eq:b9}) and (\ref{eq:b10}), we compute that
\begin{align}
-\gg(x)\cdot& \fr 1\gd D_x\psi^\gd(x,y)
\leq -\gg^\gd(x)\cdot \fr 1\gd D_x\psi^\gd(x,y)+C_4\go(\gd)\label{eq:b15}\\
\leq&\, \gg^\gd(x)\cdot \fr 1\gd D_y\psi^\gd(x,y) 
+\fr{\gg_0}{\gd}|D_x\psi^\gd(x,y)+D_y\psi^\gd(x,y)|+C_4\go(\gd)\nonumber\\
\leq&\, \gg_0C_3\go(\gd)+C_4\go(\gd).\nonumber 
\end{align}
We select a $\gd_4>0$ so that $(\gg_0C_3+C_4)\go(\gd_4)<\gep$. From (\ref{eq:b14}) 
and (\ref{eq:b15}), we find 
that if $0<\gd<\gd_4$, then (\ref{eq:b6}) holds. 

Finally, setting $\gd_0=\gd_1\wedge\gd_2\wedge\gd_3\wedge\gd_4$, we conclude that if 
$0<\gd<\gd_0$, then $u^\gd$ is a subsolution of (\ref{eq:b5}) and 
satisfies (\ref{eq:b6}). 
\end{proof}

\begin{lem}\label{thm:b5}
Let $\mu>0$. Let $u,\,v\in \Lip(\gS)$ be subsolutions of 
{\em (\ref{eq:b1})}, 
with $f:=0$ and $g:=-\mu$. Then $u\wedge v$ is a subsolution of 
{\em (\ref{eq:b1})}, with 
$f=g=0$. 
\end{lem}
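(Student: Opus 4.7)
The plan is to use Lemma \ref{thm:b4} to upgrade the strict boundary inequality $g=-\mu$ into a one-sided derivative bound on sup-convolutions, take the minimum of the two regularizations, and then remove the regularization.

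Fix $\gep\in(0,r)$. By Lemma \ref{thm:b4} there exists $\gd_0>0$, independent of $u$ and $v$, such that for every $\gd\in(0,\gd_0)$ the sup-convolutions $u^\gd, v^\gd$ are viscosity subsolutions of $H(x,Dw)=\gep$ in $\gO_{r-\gep}$ and satisfy $D_\gg^+ w(x)\leq\gep$ at every $x\in\gG_{r-\gep}$. The standard sup-convolution estimate $|u^\gd(x)-u(x)|\leq RC_2\gd$, obtained by choosing $y=x$ for the lower bound and using (\ref{eq:b8}) together with the Lipschitz constant $R$ for the upper bound, shows that $u^\gd\to u$ and $v^\gd\to v$ uniformly on $\ol\gO_r$ as $\gd\to 0$.

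Set $w^\gd:=u^\gd\wedge v^\gd$. Its interior subsolution property on $\gO_{r-\gep}$ is the boundary-free version of Theorem \ref{thm:b1}, recalled as a standard Barron--Jensen fact just before Lemma \ref{thm:b3}. For the boundary, the elementary inequality $\min(a_1,b_1)-\min(a_2,b_2)\leq\max(a_1-a_2,b_1-b_2)$ yields
\[
D_\gg^+ w^\gd(x)\leq\max\{D_\gg^+ u^\gd(x),\,D_\gg^+ v^\gd(x)\}\leq\gep \quad\hb{ for all }x\in\gG_{r-\gep}.
\]
This pointwise bound translates into a viscosity boundary inequality: if $\phi\in C^1(\ol\gO_{r-\gep})$ and $w^\gd-\phi$ attains a maximum at $x\in\gG_{r-\gep}$, then $\gg(x)\cdot e_n\geq\gg_0^{-1}>0$ ensures $x-t\gg(x)\in\gS$ for all small $t>0$, so
\[
w^\gd(x)-w^\gd(x-t\gg(x))\geq\phi(x)-\phi(x-t\gg(x)),
\]
and letting $t\downarrow 0$ gives $D_\gg\phi(x)\leq D_\gg^+ w^\gd(x)\leq\gep$. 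Thus $w^\gd$ is a viscosity subsolution of (\ref{eq:b1}) in $\gO_{r-\gep}\cup\gG_{r-\gep}$ with $f:=\gep$ and $g:=\gep$.

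Two limits finish the proof. First, the uniform convergence $w^\gd\to u\wedge v$ combined with the standard stability of viscosity subsolutions transfers the approximate subsolution property, with $f=g=\gep$, to $u\wedge v$. Next let $\gep\to 0^+$: any point $x\in\gO_U\cup\gG_U$ lies in $\gO_{r-\gep}\cup\gG_{r-\gep}$ for all small $\gep$, and for a test function $\phi$ with a strict maximum of $(u\wedge v)-\phi$ at $x$, passing to a subsequence along which the same alternative (the interior $H$-inequality or the boundary $D_\gg$-inequality) holds and then sending $\gep\to 0$ gives the desired inequality with $f=g=0$. The main obstacle is the boundary step in the previous paragraph: converting the one-sided pointwise derivative bound $D_\gg^+ w^\gd\leq\gep$ into a genuine viscosity boundary-subsolution statement, which is exactly what the introduction of $D_\gg^+$ in Lemma \ref{thm:b4} and the elementary min-difference inequality achieve.
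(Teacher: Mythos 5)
Your proposal follows essentially the same route as the paper's proof: apply Lemma \ref{thm:b4} to pass to the sup-convolutions $u^\gd$, $v^\gd$, take the minimum (Barron--Jensen for the interior, the min-difference inequality for the one-sided boundary derivative), convert the pointwise bound $D_\gg^+ w^\gd\leq\gep$ into the viscosity boundary inequality, then send $\gd\to 0$ and finally $\gep\to 0$. You spell out a few steps the paper leaves as ``easily checked,'' but the structure and the key ingredients are identical.
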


\begin{proof} Fix any $\gep\in(0,r)$. In view of Lemma \ref{thm:b4}, there is a constant 
$\gd_0>0$ such that if $0<\gd<\gd_0$, then $u:=u^\gd,\,v^\gd$ are solutions of 
$H(x,Du)\leq \gep$ in the viscosity sense in $\gO_{r-\gep}$ 
and satisfy $D_\gg^+ u\leq\gep$ on $\gG_{r-\gep}$. As is well-known, 
since $H(x,\cdot)$ is convex, the function $z^\gd:=u^\gd\wedge v^\gd$ is a subsolution of 
$H(x,Dz^\gd)\leq\gep$ in $\gO_{r-\gep}$. 
Also, it is easy to see that $D_\gg^+ z^\gd(x)\leq \gep$ for  
$x\in \gG_{r-\gep}$. It is then easily checked that $z^\gd$ is a subsolution of 
(\ref{eq:b1}), with $\gO_U:=\gO_{r-\gep}$, $\gG_U:=\gG_{r-\gep}$, $f(x):=\gep$ and  
$g(x):=\gep$.    
Sending $\gd\to 0$ and setting $z:=u\wedge v$, we see by the stability of viscosity 
property under uniform convergence that 
$z$ is a viscosity subsolution of (\ref{eq:b1}), 
with $\gO_U:=\gO_{r-\gep}$, $\gG_U:=\gG_{r-\gep}$, $f(x):=\gep$ and $g(x):=\gep$. 
But, since $\gep\in(0,\,r)$ is arbitrary, the function $z$ is a viscosity 
subsolution of (\ref{eq:b1}), with $f:=0$ and $g:=0$.
\end{proof}

Noting that for any $u,v\in C(\gS)$, $0<\gl<1$ and $x\in\gG_U$, 
\[
D_\gg^+ (\gl u+(1-\gl)v)(x)\leq \gl D_\gg^+ u(x)+(1-\gl)D_\gg^+ v(x),  
\] 
we deduce that the argument of the above proof yields also the following lemma.  
  
\begin{lem}\label{thm:b6}
Let $\mu>0$ and $f_1, f_2\in C(\ol\gS)$. For $i=1,2$ let 
$u_i\in C(\ol\gS)$ be a subsolution of 
{\em (\ref{eq:b1}), (\ref{eq:b2})}, 
with $f:=f_i$ and $g:=-\mu$. Let $0<\gl<1$ and set $u=\gl u_1+(1-\gl)u_2$ 
and $f=\gl f_1+(1-\gl)f_2$. Then $u$ is a subsolution of 
{\em (\ref{eq:b1})}, with $g:=0$. 
\end{lem}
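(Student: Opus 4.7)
My plan is to parallel the proof of Lemma \ref{thm:b5} step by step, with the minimum $u\wedge v$ replaced by the convex combination $\gl u_1+(1-\gl)u_2$. The first task is a minor adaptation of Lemma \ref{thm:b4} allowing a nonzero continuous right-hand side $f$: its proof goes through almost verbatim, since the only change is that the bound $H(y,D_y\psi^\gd(x,y)/\gd)\leq 0$ becomes $\leq f(y)$, which combined with $|x-y|\leq C_2\gd$ still yields $H(x,D\phi(x))\leq f(x)+\gep$ after shrinking $\gd$ to absorb the error $\go_H(C_2\gd)+\go_H(C_3\go(\gd))+\go_f(C_2\gd)$. Fixing $\gep\in(0,r)$, I thereby obtain a $\gd_0>0$ so that, for $0<\gd<\gd_0$, each sup-convolution $u_i^\gd$ is a viscosity subsolution of $H(x,Du)\leq f_i(x)+\gep$ in $\gO_{r-\gep}$ and satisfies $D_\gg^+ u_i^\gd\leq\gep$ on $\gG_{r-\gep}$.

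Next I set $u^\gd:=\gl u_1^\gd+(1-\gl)u_2^\gd$ and observe that, because each $u_i^\gd$ is a sup-convolution against the smooth kernel $\gd^{-1}\psi^\gd$, both $u_i^\gd$ and $u^\gd$ are locally semiconcave, hence locally Lipschitz and a.e.\ differentiable. At any point of differentiability one has $Du^\gd=\gl Du_1^\gd+(1-\gl) Du_2^\gd$, and the convexity of $H(x,\cdot)$ yields
\[
H(x,Du^\gd(x))\leq \gl H(x,Du_1^\gd(x))+(1-\gl)H(x,Du_2^\gd(x))\leq f(x)+\gep.
\]
Since a semiconcave function satisfying a first-order HJ inequality almost everywhere is automatically a viscosity subsolution, this shows $u^\gd$ is a viscosity subsolution of $H(x,Du^\gd)\leq f(x)+\gep$ in $\gO_{r-\gep}$. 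On $\gG_{r-\gep}$, the subadditivity of $D_\gg^+$ noted in the remark preceding the lemma gives $D_\gg^+ u^\gd\leq\gl\gep+(1-\gl)\gep=\gep$.

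Thus $u^\gd$ is a viscosity subsolution of (\ref{eq:b1}) on $\gO_U:=\gO_{r-\gep}$, $\gG_U:=\gG_{r-\gep}$ with data $f+\gep$ and $\gep$. Sending $\gd\to 0^+$, the convergences $u_i^\gd\to u_i$ uniform on compacta imply $u^\gd\to u$ uniformly on compacta, and stability of the viscosity property transfers the subsolution property to $u$; since $\gep\in(0,r)$ is arbitrary, this finishes the proof with $g:=0$. I expect the main obstacle to be the passage from the a.e.\ identity $Du^\gd=\gl Du_1^\gd+(1-\gl) Du_2^\gd$ to a genuine viscosity subsolution statement: this requires the semiconcavity of $u^\gd$, which in turn rests on the smoothness of the kernel $\psi^\gd$ in $x$, and it is precisely this regularity produced by the sup-convolution that lets the convexity of $H(x,\cdot)$ carry the convex combination through to a subsolution of the equation.
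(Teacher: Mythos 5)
Your proposal follows essentially the same route as the paper: the paper's proof of Lemma \ref{thm:b6} simply says that the argument of Lemma \ref{thm:b5} (sup-convolution via Lemma \ref{thm:b4}, adapted to a nonzero right-hand side $f_i$, then passage to the limit in $\gd$ and $\gep$) carries over, with the convexity of $H(x,\cdot)$ handling the convex combination in the interior and the subadditivity of $D_\gg^+$ handling the boundary, exactly as you do.

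One justification needs repair: a sup-convolution is locally semi\emph{convex}, not semiconcave (write $u_i^\gd(x)+C|x|^2$ as a supremum of functions that are convex in $x$ up to the controlled second derivatives of $\psi^\gd(\cdot,y)$ on the relevant range $|x-y|\leq C\gd$), so your appeal to ``semiconcave a.e.\ subsolutions are viscosity subsolutions'' does not apply as stated. The step you need is still true and standard: since $u^\gd=\gl u_1^\gd+(1-\gl)u_2^\gd$ is locally Lipschitz and satisfies $H(x,Du^\gd(x))\leq f(x)+\gep$ at a.e.\ point (by convexity of $H(x,\cdot)$ at common differentiability points), the classical fact that, for convex Hamiltonians, locally Lipschitz a.e.\ subsolutions are viscosity subsolutions (mollify and use Jensen's inequality, or use that at a touching point a semiconvex function is differentiable and its gradient is a limit of nearby a.e.\ gradients) gives the viscosity inequality in $\gO_{r-\gep}$. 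With that correction, the rest of your argument, including the boundary estimate and the two limiting steps, matches the paper's proof.
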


\begin{proof}[Proof of Theorem {\em\ref{thm:b1}}]By the continuity of the function $u$, 
we may assume that $\cS$ is a sequence $\{u_k\}_{k\in\N}$. 
Indeed, we can choose a sequence $\{K_m\}_{m\in\N}$ of compact subsets of $\gS$ such that 
$\gS=\bigcup_{m\in\N}K_m$. By a compactness argument, we can choose for each $m\in\N$ 
a sequence $\{v_{m,j}\}_{j\in\N}\subset\cS$ such that 
$u(x)=\inf\{v_{m,j}(x)\mid j\in\N\}$ for $x\in K_m$. Then we have 
$u(x)=\inf\{v_{m,j}(x)\mid j,m\in\N\}$ for $x\in\gS$.  
Relabeling $\{v_{m,j}\}$ appropriately, we find a sequence $\{u_k\}$ which replaces 
$\cS$ in the following argument. 

Next, we fix any $\mu>0$. According to the $C^1$ regularity 
of $\gO$ and the continuity of $g$, we may 
select $\psi_\mu\in C^1(\ol\gO)$ so that 
\[
g(x)+\mu\leq D_\gg\psi_\mu(x)\leq g(x)+2\mu \ \ \hb{ for }x\in\gG.
\]
Set $v_k(x)=u_k(x)-\psi_\mu(x)$ and $v(x)=u(x)-\psi_\mu(x)$ for $x\in\gS$ 
and observe that $w:=v_k$ is a solution of 
\begin{equation}\label{eq:b16}
\left\{
\begin{aligned}
&\widetilde H(x,Dw)\leq 0 \ \ \hb{ in }\gO_U,\\
&D_\gg w(x)\leq-\mu \ \ \hb{ on }\gG_U,
\end{aligned}
\right.
\end{equation}
where $\widetilde H$ is the continuous function on $\ol\gO\tim\R^n$ given by 
$\widetilde H(x,p)=H(x,p+D\psi_\mu(x))-f(x)$. By Lemma \ref{thm:b5}, we see that 
$w_k:=v_1\wedge \cdots \wedge v_k$ is a solution of (\ref{eq:b16}), with $\mu$ 
replaced by $0$.  
Since $w_k(x)\to v(x)$ locally uniformly on 
$\gS$ as $k\to\infty$, by the stability of the viscosity  
property under uniform convergence, we see that $v$ is a solution  
of (\ref{eq:b16}), with $\mu:=0$. This means that $u$ is a subsolution of (\ref{eq:b1}), with $g(x)$ replaced by $g(x)+2\mu$. Since $\mu>0$ is arbitrary, 
we conclude that $u$ is a subsolution of (\ref{eq:b1}).  
\end{proof}

\begin{proof}[Proof of Theorem {\em\ref{thm:b2}}]  
Since the property to be shown is local, by replacing $U$ by a smaller one, we may assume
that the sequences $\{u_k\}$ and $\{f_k\}$ are uniformly bounded on $\gS$.  
Set \[
v_k(x)=\fr{1}{\sum_{j=1}^k\gl_j}\sum_{j=1}^k \gl_j u_j(x) 
\ \ \hb{ and } \ \ F_k(x)=\fr{1}{\sum_{j=1}^k\gl_j}\sum_{j=1}^k \gl_j f_j(x) \ \ \ \hb{ for }x\in\gS.
\]
Assume that $k$ is sufficiently large, so that $\sum_{j=1}^k\gl_j>0$, $v_k\in\Lip(\gS)$ 
and $F_k\in C(\gS)$.   
Moreover, using Lemma \ref{thm:b6} and arguing as in the previous proof 
that $v_k$ is a subsolution of (\ref{eq:b1}), with $f$ replaced by $F_k$. 
In view of the uniform 
boundedness of the sequences $\{u_k\}$ and $\{f_k\}$, we see that $v_k(x)\to u(x)$ 
and $F_k(x)\to f(x)$ uniformly on
$\gS$ as $k\to\infty$. 
By the stability of the viscosity property, we conclude that 
$u$ is a subsolution of (\ref{eq:b1}). \end{proof}

\subsection{Propositions under the coercivity assumption}
In this subsection, we always assume that (A1)--(A3) hold, and reformulate Theorems \ref{thm:b1} and \ref{thm:b2}.

\begin{thm}\label{thm:b7}
Let $\cS\subset C(\gS)$ 
be a nonempty subset of subsolutions of {\em(\ref{eq:b1})}. 
Assume that $\inf\{v(x)\mid v\in\cS\}>-\infty$ 
for some $x\in\gS$. Then the function 
\begin{equation}\label{eq:b+2}
u(x):=\inf\{v(x)\mid v\in\cS\}
\end{equation}
on $\gS$ is a subsolution of {\em(\ref{eq:b1})}.   
\end{thm}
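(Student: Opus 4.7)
The plan is to reduce Theorem \ref{thm:b7} to the already-established Theorem \ref{thm:b1} by exploiting coercivity (A2) to upgrade continuous subsolutions to locally Lipschitz ones and to propagate the one-point lower bound throughout $\gS$.

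First I would show that, under (A1)--(A3), every $v\in C(\gS)$ that is a viscosity subsolution of (\ref{eq:b1}) is locally Lipschitz on $\gS$, with local Lipschitz constant on each compact $K\subset \gS$ depending only on $H,f,g,\gg$ and $K$ (in particular independent of $v$). In the interior $\gO_U$ this is classical: whenever $\phi\in C^1$ realizes a maximum of $v-\phi$ at $x\in\gO_U$, the viscosity inequality $H(x,D\phi(x))\leq f(x)$ together with coercivity forces $|D\phi(x)|\leq L_K$, where $L_K$ depends only on $\sup_K f$ and the coercivity profile of $H$. Near $\gG_U$, after flattening as in (\ref{eq:b2}), one extends $v$ past $\gG_U$ compatibly with the boundary condition $D_\gg v\leq g$ (for instance by a reflection in the $\gg$-direction adjusted by $g$), obtaining a subsolution of a coercive HJ equation on an $\R^n$-neighborhood of each $x_0\in\gG_U$, to which the interior argument applies.

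With equi-Lipschitz continuity of $\cS$ on compact subsets in hand, the hypothesis $u(x_0)>-\infty$ propagates: for any $y$ in the connected component $\gS_0$ of $\gS$ containing $x_0$, pick a compact $K\subset\gS_0$ containing a rectifiable path from $x_0$ to $y$ and estimate $v(y)\geq v(x_0)-L_K\cdot\mathrm{length}$ uniformly over $v\in\cS$. Thus $u$ is locally bounded below on $\gS_0$, and as an infimum of a family with common local Lipschitz constants it is itself locally Lipschitz on $\gS_0$; in particular $u\in C(\gS_0)$ and $\cS\subset \Lip(\gS_0)$. Theorem \ref{thm:b1} then yields that $u$ is a subsolution of (\ref{eq:b1}) on $\gS_0$, and since the subsolution property is local (by remark (i) following the statements of Theorems \ref{thm:b1} and \ref{thm:b2}) we conclude on all of $\gS$.

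The main obstacle is the boundary Lipschitz regularity in the first step. Interior coercivity is routine, but up to $\gG_U$ the subsolution condition is a disjunction in which the boundary inequality $D_\gg \phi\leq g$ alone provides no gradient bound; only the obliqueness (A3) prevents $v$ from developing arbitrarily large gradients at $\gG_U$, and the extension (or equivalent perturbation) must be calibrated so that the Lipschitz constant remains uniform across the entire family $\cS$.
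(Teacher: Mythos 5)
Your overall strategy coincides with the paper's: reduce to Theorem~\ref{thm:b1} by showing the family $\cS$ is equi-Lipschitz (hence $u\in\Lip(\gS)$). But you see an obstacle at $\gG_U$ that is not actually there, and the detour you take to deal with it is where the proposal is weakest.

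The point you miss is that the hypothesis $\cS\subset C(\gS)$ already gives continuity of each $v$ up to $\gG_U$, so the boundary condition $D_\gg v\leq g$ is irrelevant to the Lipschitz estimate. After shrinking $U$ so that $f$ is bounded, coercivity (A2) gives a uniform $C$ with $|Dv|\leq C$ in the viscosity sense in $\gO_U$, hence $|v(x)-v(y)|\leq C'\,|x-y|$ for $x,y\in\gO_U$ (with $C'$ absorbing the comparison between intrinsic and Euclidean distance, valid by the $C^1$ regularity and boundedness of $\gO$). Since $\gS=\ol{\gO_U}\cap U$ and each $v$ is continuous on $\gS$, this estimate passes to $x,y\in\gS$ by approximating from the interior. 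That is the whole of the paper's argument; no boundary analysis is needed.

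Your proposed reflection/extension ``compatibly with the boundary condition'' is therefore unnecessary, and as stated it is also a genuine gap: reflecting a viscosity subsolution of an oblique-derivative problem across $\gG_U$ does not automatically produce a subsolution of a coercive equation on a two-sided neighborhood, and you do not say how the reflected Hamiltonian or the adjustment by $g$ would be arranged so that the disjunctive viscosity inequality survives. (A boundary Lipschitz estimate that does \emph{not} presuppose continuity is indeed needed elsewhere — see Lemma~\ref{thm:c3}, which treats $\USC$ subsolutions — but the device used there is a barrier/comparison construction with the distance to a cone, not a reflection.) On the positive side, your explicit propagation of the one-point lower bound through connected components is a point the paper leaves implicit and is worth spelling out; otherwise, once you drop the reflection step and invoke continuity up to $\gG_U$ directly, your argument collapses to the paper's.
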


A consequence of the above theorem is stated as follows. If $\cS\subset C(\gS)$ is 
a nonempty subset of solutions of (\ref{eq:b1}) and formula 
(\ref{eq:b+2}) defines a real-valued function $u$, then $u$ 
is a solution of (\ref{eq:b1}). Indeed, as is well-known, 
the supersolultion property is stable under taking infimums, and therefore $u$ is a
supersolution of (\ref{eq:b1}) as well.

\begin{proof}
Because of the local nature of our assertion, 
by replacing $U$ by a smaller one, we may assume
that $f$ is bounded on $\gS$.  
Then, by the coercivity assumption (A2), we can choose a 
constant $C>0$ so that for $(x,p)\in\ol\gO\tim\R^n$, if $H(x,p)\leq f(x)$, then 
$|p|\leq C$. This together with the boundedness and 
$C^1$ regularity of $\gO$ implies that $\cS$ is equi-Lipschitz continuous 
on $\gS$. Consequently, we have $u\in \Lip(\gS)$. Applying Theorem 
\ref{thm:b1}, we find that $u$ is a subsolution of (\ref{eq:b1}).  
\end{proof}

We consider next the evolution equation with the Neumann type 
boundary condition
\begin{equation}\label{eq:b17}
\left\{
\begin{aligned}
&u_t+H(x,Du)=f(x,t) \ \ \hb{ in }\gO_U\tim(0,\,T),\\
&D_\gg u=g(x) \ \ \ \ \hb{ on }\gG_U\tim(0,\,T), 
\end{aligned} 
\right.
\end{equation}
where $f\in C(\gS\tim(0,\,T))$. 

\begin{thm}\label{thm:b8}
Let $\cS\subset C(\gS\tim(0,\,T))$  
be a nonempty subset of subsolutions of {\em(\ref{eq:b17})}. 
Assume that $\cS$ is uniformly bounded on compact subsets of $\gS\tim(0,\,T)$. 
Then the function 
\begin{equation}\label{eq:b+3}
u(x,t):=\inf\{v(x,t)\mid v\in\cS\} 
\end{equation}
on $\gS\tim(0,\,T)$ 
is a subsolution of {\em(\ref{eq:b17})}.   
\end{thm}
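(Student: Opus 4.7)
The plan is to parallel the proof of Theorem \ref{thm:b7} by viewing the parabolic problem (\ref{eq:b17}) as a stationary HJ problem in one higher dimension, and then invoking Theorem \ref{thm:b1}. Set $y = (x, t) \in \R^{n+1}$, $\tilde\gO := \gO_U \tim (0, T)$, $\tilde\gG := \gG_U \tim (0, T)$, the vector field $\tilde\gg(y) := (\gg(x), 0) \in \R^{n+1}$, and the Hamiltonian $\tilde H(y, Q) := Q_{n+1} + H(x, Q') - f(x, t)$ for $Q = (Q', Q_{n+1}) \in \R^n \tim \R$. Then $\tilde H$ is convex in $Q$ and $\tilde\gg$ is oblique to $\tilde\gG$ since $\tilde\gg \cdot (\nu(x), 0) = \gg(x) \cdot \nu(x) > 0$, so the structural assumptions (A1) and (A3) transfer, though (A2) fails for $\tilde H$ as a function of $Q$ (it is linear in $Q_{n+1}$). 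Viscosity subsolutions of (\ref{eq:b17}) correspond bijectively to subsolutions of the associated stationary problem.

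Next, localize as in the proof of Theorem \ref{thm:b7}: since the subsolution property is local, replace $U$ by a small ball and $(0, T)$ by a compact subinterval, whereby $\cS$ and $f$ become uniformly bounded. The coercivity of $H$ in $p$ alone, combined with a standard test-function argument for subsolutions of coercive parabolic HJ equations, yields that $\cS$ is equi-Lipschitz in $x$ uniformly in $t$. The subsolution inequality also gives a one-sided upper bound $v_t \leq M$ in the viscosity sense for each $v \in \cS$, but no lower bound in general, so the elements of $\cS$ are only semi-Lipschitz in $t$; in particular, the hypothesis of Theorem \ref{thm:b1} (full Lipschitz continuity of each family member) is not directly available in the $(n+1)$-dimensional setting.

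To remedy this, regularize each $v \in \cS$ in time via a sup-convolution: for $\gep > 0$ set
\[
v^\gep(x, t) := \sup_{s \in (0, T)} \left( v(x, s) - \frac{(s - t)^2}{2\gep} \right).
\]
Each $v^\gep$ is Lipschitz in $(x, t)$ (with constant depending on $\gep$), decreases pointwise to $v$ as $\gep \to 0^+$, and satisfies, by the envelope theorem, a perturbed HJ inequality
\[
(v^\gep)_t + H(x, D v^\gep) \leq f(x, t) + \eta(\gep),
\]
with $\eta(\gep) \to 0$ (the shift $s^* - t$ being of order $\sqrt\gep$). The Neumann-type boundary condition is preserved up to the same additive error since $\tilde\gg$ has no $t$-component. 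Applying Theorem \ref{thm:b1} in $\R^{n+1}$ to $\{v^\gep : v \in \cS\}$ gives that $u^\gep := \inf_{v \in \cS} v^\gep$ is a subsolution of the perturbed equation, and sending $\gep \to 0^+$ with $u^\gep \searrow u$ together with the stability of viscosity subsolutions under half-relaxed limits yields that $u$ is a subsolution of (\ref{eq:b17}).

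The main obstacle is the time-regularization step: one must carefully verify the perturbed HJ inequality and perturbed Neumann-type boundary condition for $v^\gep$, control the perturbation $\eta(\gep)$ uniformly in $v \in \cS$, and justify the passage to the limit $\gep \to 0^+$ within the $(n+1)$-dimensional stationary formulation.
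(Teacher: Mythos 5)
Your proposal follows the paper's proof essentially step for step: recast (\ref{eq:b17}) as a stationary problem in $\R^{n+1}$ with $\widetilde H(x,t,p,q)=H(x,p)+q-f$ and $\tilde\gg=(\gg,0)$, localize so that $\cS$ and $f$ are uniformly bounded, sup-convolve in time to recover full Lipschitz continuity (circumventing the failure of (A2) for $\widetilde H$), apply Theorem \ref{thm:b1} to the regularized family, and send $\gep\to 0$ using stability under monotone (half-relaxed) limits. One small inaccuracy in a side remark: the claim that $\cS$ is equi-Lipschitz in $x$ uniformly in $t$ \emph{before} time-regularization is not justified, since without a lower bound on $v_t$ the inequality $v_t+H(x,Dv)\leq f$ does not control $H(x,Dv)$ — indeed this is exactly why the sup-convolution is needed (compare Lemma \ref{thm:c5}) — but since your argument does not rely on that remark, the proof is unaffected.
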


A remark parallel to the remark after Theorem \ref{thm:b7} is valid here. 
Indeed, if $\cS\subset C(\gS\tim(0,\,T))$  
is a nonempty subset of solutions of (\ref{eq:b17}) and it is 
uniformly bounded on compact subsets of $\gS\tim(0,\,T)$, then the function 
$u$ given by (\ref{eq:b+3}) is a solution of (\ref{eq:b17}).

\begin{proof} Because the viscosity  
property is local, we may assume, by replacing $U$ and the interval $(0,\,T)$   
by smaller ones and by translation in the $t$-direction if needed, that $\cS$ are uniformly bounded  
on $\gS\tim(0,\,T)$. We may aslo assume that $f\in \BUC(\gS)$. 
Let $C>0$ be a constant such that $|v(x,t)|\leq C$ for $(x,t)\in\gS\tim (0,\,T)$ 
and $v\in\cS$. 

Let $\gep>0$ and introduce the sup-convolution of $v\in\cS$  
with respect to 
the $t$-variable:  
\[
v^\gep(x,t)=\inf_{0<s<T}\Big(v(x,s)-\fr{1}{2\gep}(t-s)^2\Big) \ \ 
\hb{ for }(x,t)\in \gS\tim\R. 
\]   
Setting $\gd=2\sqrt{\gep C}$, we observe that for $(x,t)\in\gS\tim (\gd,\, T-\gd)$, 
\[
v^\gep(x,t)=\max_{|s-t|\leq \gd}\Big(u(x,s)-\fr{1}{2\gep}(t-s)^2\Big),
\]
from which we deduce as usual in viscosity solutions theory that 
$v^\gep$ is a subsolution of 
\begin{equation}\label{eq:b18}
\left\{
\begin{aligned}
&v^\gep_t+H(x,Dv^\gep)=f+\go(\gd) \ \ \hb{ in }\gO_U\tim(\gd,\, T-\gd), \\
&D_\gg v^\gep=g \ \ \ \ \hb{ on } \gG_U\tim (\gd,\,T-\gd), 
\end{aligned}\right.
\end{equation}
where $\go$ is the modulus of continuity of $f$. 

Now, the family of functions $v^\gep(x,\cdot)$, with $x\in\gS$ and $v\in\cS$, 
is equi-Lipschitz continuous on $(\gd,\,T-\gd)$. From this and (\ref{eq:b18}), we see that 
$H(x,Dv^\gep)\leq C_\gep$ in the viscosity sense in $\gO_U\tim(\gd,\,T-\gd)$ 
for all $v\in\cS$ and for some constant $C_\gep>0$. 
Observe then that for $(x,t)\in\gS\tim\R$, 
\[
u^\gep(x,t):=\inf_{0<s<T}\Big(u(x,s)-\fr{1}{2\gep}(t-s)^2\Big) 
=\inf\{v^\gep(x,t)\mid v\in\cS\}. 
\]
We apply Theorem \ref{thm:b1}, to see that $u^\gep$ is a subsolution 
of (\ref{eq:b18}).  Indeed, in order to apply Theorem \ref{thm:b1}, we set 
$\widetilde\gO=\gO\tim(0,\,T)$, $\widetilde U=U\tim(0,\,T)$, 
$\widetilde H(x,t,p,q)=H(x,p)+q$ and $\tilde\gg(x,t)=(\gg(x),\,0)$, and 
regard problem (\ref{eq:b17}) as problem 
(\ref{eq:b1}),    
with $\widetilde \gO$, 
$\tilde U$, $\widetilde H$ and $\tilde\gg$ 
in place of $\gO$, $U$, $H$ and $\gg$, respectively. 

Next, we observe that for $(x,t)\in\gS\tim(0,\,T)$, the family  
$\{u^\gep(x,t)\}$ converges monotonically to $u(x,t)$ as $\gep\to 0$, which implies,
together with the continuity of $u^\gep$,  
that $u(x,t)$ is identical to the upper relaxed limit of $u^\gep(x,t)$ as $\gep\to 0$. 
Because of the stability of the subsolution property under such a limiting process, 
we see that $u$ is a subsolution of (\ref{eq:b17}).    
\end{proof}

\begin{thm}\label{thm:b10} 
For $k\in\N$ let $f_k\in C(\gS\tim(0,\,T))$ and    
$u_k\in \USC(\gS\tim(0,\,T))$ be a subsolution of {\em (\ref{eq:b17})},  
with $f_k$ in place of $f$. Let  
$\{\gl_k\}_{k\in\N}$ be a sequence of nonnegative numbers such that 
$\sum_{k\in\N}\gl_k=1$. Assume that the sequences $\{u_k\}_{k\in\N}$ 
and $\{f_k\}_{k\in\N}$ are uniformly 
bounded on compact subsets of $\gS\tim(0,\,T)$. 
Set 
\[
u(x,t)=\sum_{k\in\N}\gl_ku_k(x,t) \ \ \hb{ and } \ \ f(x,t)=\sum_{k\in\N}\gl_kf_k(x,t) 
 \ \ \ \hb{ for }x\in\gS\tim(0,\,T). 
\]
Then $u$ is a 
subsolution of {\em (\ref{eq:b17})}.  
\end{thm}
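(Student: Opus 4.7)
The plan is to follow the architecture of Theorem~\ref{thm:b8}, substituting Theorem~\ref{thm:b2} for Theorem~\ref{thm:b1}. As in Theorem~\ref{thm:b8}, the evolutionary problem is reformulated as a stationary one in the variables $(x,t)$ by setting $\widetilde\gO=\gO\tim(0,T)$, $\widetilde U=U\tim(0,T)$, $\widetilde H(x,t,p,q)=H(x,p)+q$, and $\tilde\gg(x,t)=(\gg(x),0)$; these satisfy (A1) and (A3) (coercivity is not required by Theorem~\ref{thm:b2}, which is fortunate since $\widetilde H$ fails to be coercive in $q$). By the local nature of the viscosity property, I first shrink $U$ and the time interval so that $\{u_k\}$ and $\{f_k\}$ are uniformly bounded by some $C>0$ on $\gS\tim(0,T)$, with each $f_k\in\BUC(\gS\tim(0,T))$ having modulus of continuity $\go_k$. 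Then for each $\gep>0$ I introduce the sup-convolution in the $t$-variable,
\[
u_k^\gep(x,t):=\sup_{0<s<T}\left(u_k(x,s)-\fr{1}{2\gep}(t-s)^2\right),
\]
and invoke the same computation as in the proof of Theorem~\ref{thm:b8}: with $\gd:=2\sqrt{\gep C}$, each $u_k^\gep$ is Lipschitz in $t$ on $\gS\tim(\gd,T-\gd)$ (with constant depending only on $\gep$ and $C$, hence uniform in $k$) and is a viscosity subsolution of $(u_k^\gep)_t+H(x,Du_k^\gep)\leq f_k+\go_k(\gd)$ in $\gO_U\tim(\gd,T-\gd)$, together with $D_\gg u_k^\gep\leq g$ on $\gG_U\tim(\gd,T-\gd)$.

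Theorem~\ref{thm:b2}, applied to the family $\{u_k^\gep\}_{k\in\N}$ in the extended stationary framework, then yields
\[
v^\gep(x,t):=\sum_{k\in\N}\gl_k u_k^\gep(x,t)
\]
as a subsolution of the corresponding evolutionary problem with source $F^\gep:=f+E^\gep$, where $E^\gep:=\sum_{k\in\N}\gl_k\go_k(\gd)$. I then pass $\gep\to 0^+$. Since each $u_k$ is USC and bounded by $C$, one has $u_k^\gep\searrow u_k$ pointwise, and dominated convergence for the series yields $v^\gep\searrow u$ pointwise. Likewise $\go_k(\gd)\to 0$ for each $k$ combined with $\go_k\leq 2\|f_k\|_\infty\leq 2C$ and $\sum_k\gl_k=1$ gives $E^\gep\to 0$ by dominated convergence on the series. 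The pointwise monotone decreasing limit of the continuous functions $v^\gep$ is automatically USC, and a short computation (bounding $v^{\gep_n}(y_n,s_n)\leq v^{\gep_0}(y_n,s_n)$ for $\gep_n<\gep_0$, then letting $n\to\infty$ and $\gep_0\to 0$) shows that $\limsup^*v^\gep$ equals $u$; the standard stability of viscosity subsolutions under upper half-relaxed limits then concludes that $u$ is a subsolution of (\ref{eq:b17}).

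I expect the main delicate point to be the non-uniformity of the moduli $\go_k$ across $k$: one cannot extract a single modulus for the whole family from mere pointwise bounds on $\{f_k\}$. The remedy is that Theorem~\ref{thm:b2} imposes no $k$-uniform regularity, only individual $\Lip$-membership and uniform local boundedness, so it suffices to control the error $E^\gep=\sum_k\gl_k\go_k(\gd)$ by dominated convergence on the series, exploiting $\sum\gl_k<\infty$ and the pointwise bound $\go_k(\gd)\leq 2C$. Everything else is a direct transcription of the infimum-based argument in Theorem~\ref{thm:b8} to the convex-combination setting.
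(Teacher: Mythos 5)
Your proposal is correct and is exactly what the paper intends: the paper's one-line proof reads ``with use of Theorem \ref{thm:b1} instead of Theorem \ref{thm:b2},'' which is plainly a typo for the reverse, since Theorem \ref{thm:b8} already relies on Theorem \ref{thm:b1}. Your expansion—sup-convolution in $t$, application of Theorem \ref{thm:b2} in the extended $(x,t)$-framework, and the dominated-convergence control of $E^\gep=\sum_k\gl_k\go_k(\gd)$ to handle the $k$-dependent moduli—faithfully fills in the details that the paper compresses into that single sentence.
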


\begin{proof} 
Arguing as in the proof of Theorem \ref{thm:b8}, with use of Theorem \ref{thm:b1} 
instead of Theorem \ref{thm:b2}, we conclude that Theorem 
\ref{thm:b10} is valid.  
\end{proof}

\section{Comparison results}

The comparison results presented in this section are more or less well-known 
(see for instance \cite{Lions-Neumann_85, BarlesLions_91, DupuisIshii_90}). 
A only new feature of our results 
may be in the point that they are formulated in a localized fashion.

\begin{thm}\label{thm:c1}
Let $f_1,f_2\in C(\gS)$ and let $u\in\USC(\ol\gS)$ {\em(}resp., $v\in\LSC(\ol\gS)${\em)} 
be a subsolution {\em(}resp., a supersolution{\em)} of {\em(\ref{eq:b1})},  
with $f$ replaced by $f_1$ {\em(}resp., $f_2${\em)}. 
Assume that $f_1(x)<f_2(x)$ for $x\in\gS$. Then 
\[
\sup_{\gS}(u-v)\leq \sup_{\pl U\cap\ol\gO}(u-v).
\]  
\end{thm}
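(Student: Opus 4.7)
My plan is to argue by contradiction via the doubling-of-variables method, handling the oblique Neumann condition with the auxiliary function $\gz$ of Lemma \ref{thm:b3}. Suppose for contradiction that $M := \sup_\gS (u - v) > \sup_{\pl U \cap \ol\gO}(u - v) + 3\gth$ for some $\gth > 0$. Since $u - v$ is upper semicontinuous on the compact set $\ol\gS = \ol U \cap \ol\gO$, the supremum $M$ is attained at some $\bar x \in \gS$, and by upper semicontinuity there is a closed neighborhood $W$ of $\pl U \cap \ol\gO$ within $\ol\gS$ on which $u - v \leq M - \gth$. I first use the $C^1$ regularity of $\pl\gO$ to flatten the boundary locally so that (\ref{eq:b2}) holds, and then, using (A3) and the continuity of $g$, pick a $C^1$ function $\tau$ with $D_\gg \tau = g + \mu$ on $\gG_U$ for a small fixed $\mu > 0$; subtracting $\tau$ from both $u$ and $v$ reduces the problem to one with effective Neumann data $-\mu$, at the cost of replacing $H(x,p)$ by $H(x, p + D\tau(x))$, which still satisfies (A1)--(A3).

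For small parameters $\gep, \gd > 0$ I then maximize
\[
\Phi_{\gep,\gd}(x,y) := u(x) - v(y) - \fr{1}{\gep}\, \gz\bigl(\gg^\gd(x), x - y\bigr)
\]
over $\ol\gS \tim \ol\gS$, where $\gg^\gd$ is the mollification used before Lemma \ref{thm:b4}. Standard doubling estimates give a maximizer $(\hat x, \hat y)$ with $|\hat x - \hat y| = O(\sqrt\gep)$ converging to a diagonal point $(x^*, x^*)$ at which $(u - v)(x^*) = M$, so $\hat x, \hat y \in \gS \setminus W$ for all sufficiently small $\gep, \gd$. When $\hat x \in \gG_U$, the third identity of Lemma \ref{thm:b3} yields $\gg^\gd(\hat x) \cdot D_z\gz(\gg^\gd(\hat x), \hat x - \hat y) = \gg_n^\gd(\hat x)(\hat x_n - \hat y_n) = -\gg_n^\gd(\hat x)\hat y_n \geq 0$, and combining with the mollification bound (\ref{eq:b3}) gives
\[
\gg(\hat x) \cdot D_x\!\left[\fr{1}{\gep}\gz(\gg^\gd(\hat x), \hat x - \hat y)\right] \;\geq\; -\mu/2
\]
once $\gd$ is chosen small enough relative to $\gep$. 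Hence the Neumann alternative of the subsolution condition for $u - \tau$ is ruled out at $\hat x$, forcing the Hamilton-Jacobi inequality $H(\hat x, p_{\gep,\gd}) \leq f_1(\hat x)$ (with the $D\tau$-shift absorbed in $p_{\gep,\gd}$). A symmetric analysis at $\hat y$ when $\hat y \in \gG_U$ — carried out either by symmetrizing the penalty or by using $\gg^\gd(\hat y)$ on that side — yields $H(\hat y, q_{\gep,\gd}) \geq f_2(\hat y)$, while the interior cases are standard.

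By coercivity (A2) and the bounds (\ref{eq:b+1}), the vectors $p_{\gep,\gd}$ and $q_{\gep,\gd}$ remain in a fixed bounded set on which $H$ is uniformly continuous, and (\ref{eq:b3}) yields $|p_{\gep,\gd} - q_{\gep,\gd}| = O(\go(\gd))$. Sending $\gep \to 0$ together with $\gd = \gd(\gep) \to 0$ and using the continuity of $H$ then gives $f_2(x^*) \leq f_1(x^*)$, contradicting the strict inequality $f_1 < f_2$. The main obstacle is the interplay between the two small parameters: the mollification error $\go(\gd)\,|\hat x - \hat y|^2/(\gd\gep)$ must be made negligible despite $\gg$ being merely continuous, which forces a careful choice $\gd = \gd(\gep)$; and one must ensure the sign computation cooperates symmetrically when $\hat y$ itself lies in $\gG_U$, which is the principal asymmetry to overcome.
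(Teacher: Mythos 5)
Your overall plan — contradiction, doubling of variables with a penalty built from the auxiliary function $\gz$ of Lemma~\ref{thm:b3}, sign computation to rule out the oblique alternatives, then coercivity to conclude — is the same plan as the paper's. But there is a genuine gap in the reduction step, and a couple of unnecessary complications.

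The gap: you subtract a single $C^1$ function $\tau$ with $D_\gg\tau = g + \mu$ from \emph{both} $u$ and $v$, reducing both to effective Neumann data $-\mu < 0$. That handles the subsolution side: at $\hat x\in\gG_U$ the sign computation gives $\gg(\hat x)\cdot\hat p\geq -o(1)$, which is $> -\mu$ for small $o(1)$, so the oblique alternative for $u-\tau$ is ruled out. But the supersolution alternative is $\gg(\hat y)\cdot\hat q\geq -\mu$, and at $\hat y\in\gG_U$ (with $\hat y_n=0$, $\hat x_n\leq 0$) your penalty yields $\gg(\hat y)\cdot\hat q\approx\gg_n(\hat x)\hat x_n/\gep\leq 0$, i.e.\ $\gg(\hat y)\cdot\hat q\leq o(1)$. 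A quantity that is $\leq o(1)$ can perfectly well satisfy $\geq-\mu$, so the supersolution oblique alternative is \emph{not} ruled out, and you never get the inequality $H(\hat y, q)\geq f_2(\hat y)$. What you actually need is an \emph{opposite-signed} perturbation: $u\mapsto u-\gd\psi$ and $v\mapsto v+\gd\psi$ for a $C^1$ function $\psi$ with $D_\gg\psi>0$, so that $u$ ends up with oblique data strictly below $g$ and $v$ with data strictly above $g$. This two-sided split (the paper's $\pm\gep/2$) is exactly what lets both oblique alternatives be excluded simultaneously. Your closing remark that you must ``ensure the sign computation cooperates symmetrically'' is pointing at exactly this issue, but the proposal as written has not resolved it, and ``symmetrizing the penalty'' does not fix it: using $\gz(\gg^\gd(\hat y),x-y)$ produces a sign $\leq o(1)$ on the $v$-side, which again is not enough when the target is $-\mu$.

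Two further remarks on efficiency. First, after you flatten the boundary and localize around the maximum point of $u-v$, you can use $\phi(x,y)=\gz(\gg(0),x-y)$ with the \emph{fixed} vector $\gg(0)$; then $D_x\phi + D_y\phi\equiv 0$ exactly and the mollification $\gg^\gd$, with its error term $\go(\gd)|x-y|^2/(\gd\gep)$, is never needed. Continuity of $\gg$ near the localization point is all that is required, which is how the paper avoids the delicate $\gd$-vs-$\gep$ bookkeeping entirely. Second, if you do insist on keeping $\gg^\gd$, the parameter hierarchy you describe (``$\gd$ small relative to $\gep$'') is backwards: using the Lipschitz bound of Lemma~\ref{thm:c3} one gets $|\hat x-\hat y|\leq C\gep$, and then the error $\go(\gd)|\hat x-\hat y|^2/(\gd\gep)\lesssim\go(\gd)\gep/\gd$ is controlled by fixing $\gd$ small (so that $\go(\gd)$ is small) and \emph{then} sending $\gep\to 0$; shrinking $\gd$ together with, or faster than, $\gep$ would make $\gep/\gd$ unbounded and destroy the estimate.
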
 

We remark that if $\pl U\cap\ol\gO=\emptyset$ in the above theorem, 
then the right side of the above 
inequality equals $-\infty$ by definition. In particular, if $\ol\gO\subset U$ in the above theorem, then the theorem asserts that $\sup_{\gO}(u-v)=-\infty$.  

\begin{cor}\label{thm:c2} If $a<b$ and problem {\em(\ref{eq:i1}), (\ref{eq:i2})} has a 
subsolution, 
then problem {\em(\ref{eq:i1}), (\ref{eq:i2})}, with $b$ in place of $a$, does not 
have a supersolution.  In particular, if problem {\em(\ref{eq:i1}), (\ref{eq:i2})} 
has a solution for some $a\in\R$, then problem {\em(\ref{eq:i1}), (\ref{eq:i2})}, 
with $a$ replaced by $b\not=a$, has no solution.  
\end{cor}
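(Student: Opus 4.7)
The plan is to derive both statements as direct consequences of Theorem \ref{thm:c1}, applied in the global case where $U=\R^n$ (so that $\gO_U=\gO$, $\gG_U=\gG$, $\gS=\ol\gO$, and $\pl U\cap\ol\gO=\emptyset$). The key observation, already noted in the remark following Theorem \ref{thm:c1}, is that when $\pl U\cap\ol\gO=\emptyset$ the right-hand side $\sup_{\pl U\cap\ol\gO}(u-v)$ equals $-\infty$, so the comparison inequality forces $\sup_{\ol\gO}(u-v)=-\infty$, which contradicts the real-valuedness of subsolutions and supersolutions on the nonempty set $\ol\gO$.

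For the first assertion, I would argue by contradiction: suppose $u$ is a subsolution of (\ref{eq:i1}), (\ref{eq:i2}) with constant $a$, and $v$ is a supersolution of the same problem with $b$ in place of $a$, where $a<b$. Then, viewing things in the localized framework of Section 2, $u$ is a subsolution of (\ref{eq:b1}) with $f\equiv a$ and $v$ is a supersolution with $f\equiv b$, and the strict inequality $a<b$ gives $f_1<f_2$ pointwise on $\gS=\ol\gO$. Theorem \ref{thm:c1} then yields $\sup_{\ol\gO}(u-v)\leq-\infty$, which is impossible since any point $x_0\in\ol\gO$ provides the finite lower bound $u(x_0)-v(x_0)$.

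The second assertion follows immediately from the first by symmetry: if $u$ solves (\ref{eq:i1}), (\ref{eq:i2}) for the value $a$, then $u$ is in particular a subsolution for $a$ and a supersolution for $a$. Given any $b\neq a$, the case $b>a$ is ruled out by applying the first assertion with the solution for $a$ as the required subsolution, forbidding any supersolution (a fortiori any solution) for $b$; the case $b<a$ is handled by swapping roles, using the solution for $a$ as a supersolution and the hypothetical solution for $b$ as a subsolution, and invoking the first assertion with $(a,b)$ exchanged.

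The only mildly delicate point in executing this plan is verifying that the hypotheses of Theorem \ref{thm:c1} really cover the global problem with $U=\R^n$, and that the remark about $\pl U\cap\ol\gO=\emptyset$ applies as stated. Everything else is bookkeeping: once the comparison inequality with empty boundary is in hand, both statements are automatic, and there is no technical obstacle to overcome.
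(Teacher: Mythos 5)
Your proof is correct and is essentially the same as the paper's: both apply Theorem \ref{thm:c1} globally with $U=\R^n$, so that $\pl U\cap\ol\gO=\emptyset$ and the comparison inequality forces the impossible conclusion $\sup_{\ol\gO}(u^*-v_*)\le-\infty$, after which the ``in particular'' part follows by the same symmetry argument. The only cosmetic difference is in extracting the contradiction: the paper notes that $u+c$ remains a subsolution for every $c\in\R$, applies the comparison to get $u^*+c\le v_*$ on $\ol\gO$ for all $c$, and lets $c\to\infty$, whereas you invoke the real-valuedness of $u^*-v_*$ at a point of $\ol\gO$ directly; both are valid and equivalent in substance.
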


\begin{proof} Let $a<b$, and assume that there are a subsolution $u$ of (\ref{eq:i1}), 
(\ref{eq:i2}) and a supersolution of (\ref{eq:i1}), (\ref{eq:i2}), with $b$ in place of $a$. 
Note that, for any $c\in\R$, the function $u+c$ is also a subsolution of (\ref{eq:i1}), 
(\ref{eq:i2}).  
By Theorem \ref{thm:c1}, we have $u^*+c\leq v_*$ on $\ol\gO$ for $c\in\R$, which is
a contradiction. This proves our claim.   
\end{proof}

\begin{lem}\label{thm:c3}Assume that $f$ is bounded on $\gS$. Then 
there is a constant $C>0$, depending only $H$, $f$ 
and $\gO$, such that for any subsolution $u\in\USC(\gS)$ of {\em(\ref{eq:b1})} 
and $x,y\in\gS$,  
$|u(x)-u(y)|\leq C|x-y|$.  
\end{lem}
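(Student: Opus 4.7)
The plan is to combine the coercivity of $H$ with Lemma \ref{thm:b4}: the sup-convolutions $u^\gd$ will satisfy a coercive HJ inequality in the interior and a one-sided slope bound along $\gG$, yielding a uniform Lipschitz estimate that transfers to $u$ in the limit $\gd\to 0$. Because the claim is local and $\gO$ is $C^1$, it suffices to establish the Lipschitz bound on a neighborhood of each point of $\ol\gO$ and then patch by a finite cover; interior points are disposed of by the standard viscosity test against $\phi(y)=u(x_0)+L|y-x_0|$ with $L$ exceeding the coercivity threshold $\sup\{|p|\mid x\in\ol\gO,\,H(x,p)\leq\|f\|_\infty\}$, so the essential task is for boundary points, near which a $C^1$ change of variables reduces us to the model geometry (\ref{eq:b2}).

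To invoke Lemma \ref{thm:b4}, fix $\mu>0$ and choose $\psi_\mu\in C^1(\ol\gO)$ with $D_\gg\psi_\mu\geq g+\mu$ on $\gG$ (possible by the continuity of $g$ and the $C^1$ regularity of $\gO$). Then $\tilde u:=u-\psi_\mu$ is an USC subsolution of $\tilde H(x,D\tilde u)\leq f(x)$ in $\gO_U$ together with $D_\gg\tilde u\leq-\mu$ on $\gG_U$, where $\tilde H(x,p):=H(x,p+D\psi_\mu(x))$ is still convex and coercive, with coercivity modulus controlled by $H,\gO,g,\gg$. For fixed $\gep\in(0,r)$, a direct adaptation of the proof of Lemma \ref{thm:b4} (replacing the zero right-hand side by the bounded $f$) produces $\gd_0=\gd_0(\mu,\gep)>0$ such that for $\gd\in(0,\gd_0)$ the sup-convolution $\tilde u^\gd$ satisfies $\tilde H(x,D\tilde u^\gd)\leq\|f\|_\infty+\gep$ in $\gO_{r-\gep}$ and $D_\gg^+\tilde u^\gd\leq\gep$ on $\gG_{r-\gep}$. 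Coercivity supplies $M=M(H,f,\gO,g,\gg)$ with $|p|\leq M$ whenever $\tilde H(x,p)\leq\|f\|_\infty+\gep$, and the usual doubling-variable test inside $\gO_{r-\gep}$ yields that $\tilde u^\gd$ is $M$-Lipschitz there, hence on $\ol{\gO_{r-\gep}}$ by continuity.

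It remains to upgrade this interior bound to a Lipschitz estimate on $\gO_{r-\gep}\cup\gG_{r-\gep}$. For $x,y\in\gG_{r-\gep}$ with $|x-y|$ small, consider the two-segment path joining $x$ to $x-s\gg(x)$ and then to $y$, with $s>0$ chosen so the middle vertex lies well inside $\gO_{r-\gep}$: the first segment is controlled by $D_\gg^+\tilde u^\gd\leq\gep$, while the second lies in $\gO_{r-\gep}$ where the interior $M$-Lipschitz bound applies. The transversality (A3) permits choosing $s$ proportional to $|x-y|$, producing a Lipschitz constant $C=C(H,f,\gO,g,\gg)$ independent of $\gd$. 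Finally, the monotone convergence $\tilde u^\gd\searrow\tilde u$ transfers the bound to $\tilde u$ as $\gd\to 0+$; adding back $\psi_\mu$ and applying a finite covering of $\ol\gO$ by model charts of type (\ref{eq:b2}), together with the trivial interior estimate, yields the claimed global Lipschitz constant for $u$. The main obstacle is precisely this detour argument: converting $D_\gg^+\tilde u^\gd\leq\gep$ into two-sided Lipschitz bounds up to $\gG$ is the only step where the Neumann structure must be used quantitatively, via (A3).
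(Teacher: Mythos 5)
There is a genuine circularity in your argument. Lemma \ref{thm:b4} is stated for $u\in\Lip(\gS)$, and its proof leans essentially on that hypothesis: the bound (\ref{eq:b8}), $|x-y|\leq C_2\gd$ with $C_2=C_0R$ and $R$ the Lipschitz constant of $u$, is what keeps the maximizing point in the sup-convolution close to $x$, and the subsequent estimates (\ref{eq:b9})--(\ref{eq:b12}) propagate this. The statement you are asked to prove, however, takes $u\in\USC(\gS)$ only; a priori $u$ could have an upward jump at a point of $\gG_U$, in which case the sup-convolution of $u$ (or of $\tilde u=u-\psi_\mu$) has a Lipschitz constant of order $\gd^{-1/2}$, the estimate $|x-y|\leq C\gd$ is lost, and the chain $\tilde H(x,D\tilde u^\gd)\leq \|f\|_\infty+\gep$ in $\gO_{r-\gep}$ together with $D_\gg^+\tilde u^\gd\leq\gep$ on $\gG_{r-\gep}$ never gets established. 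You are in effect assuming the conclusion in order to invoke the lemma that is supposed to prove it.

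What is actually missing, and what the paper supplies, is the step that upgrades an $\USC$ subsolution to a continuous one at the boundary. The paper's proof first gets the interior Lipschitz bound (as you do), and then shows $u\in C(\gS)$ by proving $u(0)\leq\sup_{\gO_s}u$ for every small $s$, via a barrier: a convex cone $K$ with vertex at the origin, $K\setminus\{0\}\subset -\R^n_+$ and $-\gg(x)+B(0,\gd)\subset K$ near $0$, together with the supersolution-type barrier $v(x)=A\gz_\gep(d_K(x+\gep e_n))+\sup_{\gO_s}u$. The large slope $A\geq C_0$ rules out the PDE inequality, and the geometry of $N_K$ together with (A3) rules out the Neumann inequality, so $u\leq v$ on the relevant set; sending $\gep\to 0$ gives (\ref{eq:c1}) and hence continuity at the boundary. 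Only after this is Lipschitz continuity on $\gS$ inherited from the interior constant. Your ``detour'' argument in the last paragraph would be a reasonable way to pass a one-sided directional bound to a two-sided Lipschitz bound once you have it, but the construction of that one-sided bound requires the sup-convolution machinery to fire, which is exactly the step that fails without the continuity of $u$ up to $\gG_U$.
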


\begin{proof} Let $u\in\USC(\gS)$ be a subsolution of (\ref{eq:b1}). 
By the coercivity assumption (A2) and the boundedness of $f$, 
there is a constant $C_0>0$ 
such that for $(x,p)\in\gO_U$, if $|p|\geq C_0$, then $H(x,p)\geq f(x)+1$.   
It follows from (\ref{eq:b1}) 
that $u$ is a subsolution of $|Du|\leq C_0$ in $\gO_U$, which implies 
together with the $C^1$ regularity of $\gO$ that $u$ is 
Lipschitz continuous on $\gO_U$ with a Lipschitz constant $C>0$ 
depending only on $C_0$ and $\gO$. 

We next show that $u\in C(\gS)$, which guarantees that $u$ is Lipschitz 
continuous on $\gS$ with the same Lipschitz constant $C$.   
To this end, we need only to show that for any fixed $z\in\gG_U$, 
$u$ is continuous at $z$. By translation, we may assume that $z=0$. 
By rotation and localization, we may furthermore assume that $U$, $\gO_U$ 
and $\gG_U$ are given by (\ref{eq:b2}). Since $u\in\USC(\gS)$ and 
$u\in\Lip(\gO_U)$, it is enough to show that 
\begin{equation}\label{eq:c1}
u(0)\leq \sup_{\gO_s}u \ \ \hb{ for }s\in(0,\,r).
\end{equation}
Here and later we use the notation $\gO_s$ and $\gG_s$ as defined in (\ref{eq:b3+}). 

We may assume by replacing $r>0$ by a smaller one that 
$\gg_0:=\inf_{x\in\gG_r}\gg(x)\cdot e_n>0$. (Recall that $e_n$ denotes the unit vector $(0,...,0,1)\in\R^n$.) We select a closed convex cone $K$ with vertex at the origin 
so that $K\setminus\{0\}\subset -\R_+^n$ and $-\gg(x)+B(0,\gd)\in K$ 
for all $x\in\gG_r$ and for some $\gd>0$. 
We denote by $N_K$ the normal cone to $K$ at the origin. That is, we set 
$N_K=\{\xi\in\R^n\mid \xi\cdot p\leq 0 \ \hb{ for }p\in K\}$. 
It follows that $\xi\cdot(-\gg(x))\leq -\gd|\xi|$ for all $\xi\in N_K$ 
and $x\in\gG_r$. Let $d_K$ denote the distance function from the set $K$, i.e., 
$d_K(x)=\dist(x,K)$. As is well-known, the function $d_K$ is convex on $\R^n$, 
$d_K\in C(\R^n)\cap C^1(\R^n\setminus K)$, $d_K(x)\geq 0$ for $x\in\R^n$ 
and $Dd_K(x)\in N_K\cap \pl B(0,1)$ 
for $x\in\R^n\setminus K$.

Fix any $s\in(0,\,r)$ and 
set $\rho=\dist(K,\,\pl B(0,\,s)\cap\{x_n=0\})$. Here and later we use the notation: 
$\{x_n=0\}:=\{(x',x_n)\in\R^n\mid x_n=0\}$ and similarly $\{x_n<0\}:=
\{(x',x_n)\in\R^n\mid x_n<0\}$.  
Note that $0<\rho\leq s$ and fix any $\gep\in (0,\,\rho)$. We may assume by replacing $r>0$ by a 
smaller one that $u$ is bounded above on $\ol\gO_r$. We choose a constant $C_1>0$ 
so that $\sup_{\ol\gO_r}u\leq C_1$, $\sup_{\gO_r}|u|\leq C_1$ and $\sup_{\gG_r}g\leq C_1$. 
We select a function $\gz_\gep\in C^1(\R)$ so that $\gz_\gep'(r)\geq 1$ for $r\in\R$, 
$\gz_\gep(r)=r$ for $r\leq \gep$ and $\gz_\gep(\rho)\geq 2C_1$.   
We set $A=\max\{1,\,C_0,\,(C_1+1)/\gd\}$, and 
define 
the function $v\in C(\R^n)$ by 
\[
v(x):=A\gz_\gep(d_K(x+\gep e_n))+\sup_{\gO_s}u=A\gz_\gep(\dist(x,\,K-\gep e_n)+\sup_{\gO_s}u.  
\]
Let $V=\gO_s\setminus (K-\gep e_n)$. 
We intend to show that $u\leq v$ on the set $\ol V$. To do this, we suppose by contradiction 
that $\max_{\ol V}(u-v)>0$. 
Note that 
\[\ol V\subset \ol\gO_s=\left(\ol\gO_s\cap \{x_n<0 \}\right) 
\cup \left(
\pl B(0,s)\cap\{x_n=0\}\right)\cup \gG_s.\]
\begin{center}
\includegraphics[height=5cm]{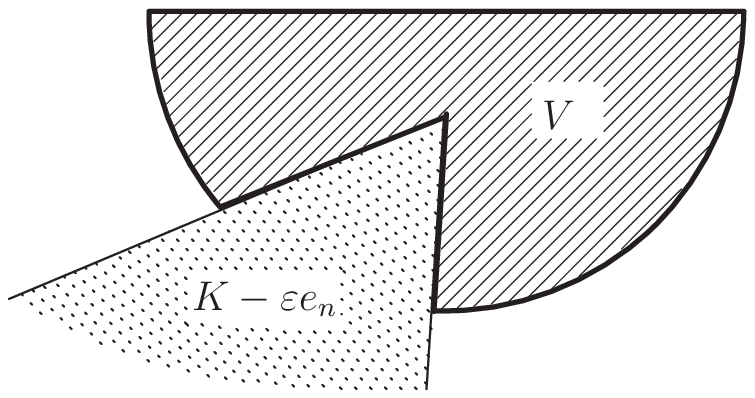}         
\end{center} \vspace{-20pt}
Since $u\in C(\gO_r)$, it is clear 
that $u\leq \sup_{\gO_s}u\leq v$ on $\ol\gO_s\cap\{x_n<0\}$.  
For any $x\in\pl B(0,s)\cap\{x_n=0\}$, we have 
$\dist(x,\,K-\gep e_n)\geq \dist(x,\,K)\geq \rho >s$ and hence 
\[v(x)\geq 
\gz_\gep(d_K(x+\gep e_n))-C_1\geq \gz_\gep(\rho)-C_1\geq C_1\geq u(x).\] 
Consequently, we have $u(x)\leq v(x)$ for $\ol V\cap \gG_s$ and therefore 
there is a point $y\in \gG_s$ such that $(u-v)(y)=\max_{\ol V}(u-v)$. 
Since $u$ is a subsolution of (\ref{eq:b1}), with $V$ in place of $U$, 
we have either $H(y,\,Dv(y))\leq f(y)$ or $D_\gg v(y)\leq g(y)$. 
Since $y\in\gG_s$ and $\gG_s\cap (K-\gep e_n)=\emptyset$, 
we have 
\[
Dv(y)=A\gz_\gep'(d_K(y+\gep e_n))Dd_K(y+\gep e_n).  
\]
Hence, we get $|Dv(y)|\geq A\geq C_0$ 
and, by the choice of $C_0$, $H(y,\,Dv(y))>f(y)$. 
Also, we get
\[
D_\gg v(y)=A \gz_\gep'(d_K(y+\gep e_n))\gg(y)\cdot Dd_K(y+\gep e_n)
\geq A \gd\geq C_1+1 >g(y).
\]
We are in a contradiction, and thus we conclude that (\ref{eq:c1}) holds.  
\end{proof}

\begin{proof}[Proof of Theorem {\em\ref{thm:c1}}] We first deal with the case 
where $\ol\gO\cap\pl U\not=\emptyset$. 
We suppose by contradiction that 
\begin{equation}\label{eq:c2}
\max_{\ol\gS}(u-v)>\max_{\pl U\cap \ol\gO}(u-v). 
\end{equation} 

By replacing $U$ by a smaller one (for instance, the set $\{x\in U\mid 
\dist(x,\pl U)>\gep\}$ with sufficiently small $\gep>0$) if needed, we may assume that 
$f_1,\,f_2$ are continuous on $\ol\gS$ and 
$\sup_{\ol\gS}(f_1-f_2)<0$. We note by Lemma \ref{thm:c3} that 
the function $u$ is Lipschitz continuous on $\gS$.   

We now intend to replace $H$ by a uniformly continuous Hamiltonian, which is not 
coercive nor convex any more.  For this, we define the function $\widetilde H
\in\UC(\gS\tim\R^n)$ by 
\[
\widetilde H(x,p)=\min\{H(x,p)-f_1(x),\, 1\}. 
\]  
Set $\tilde f_1(x)=0$ and $\tilde f_2(x)=\min\{f_2(x)-f_1(x),\,1\}$ for $x\in\gS$.  
Now, the function $u$ (resp., $v$) is  
a subsolution (resp., a supersolution) of (\ref{eq:b1}), with $\widetilde H$ and 
$\tilde f_1$ (resp., $\tilde f_2$) in place of $H$ and $f$. 
Thus, replacing $H$, $f_1$ and $f_2$ by $\widetilde H$, $\tilde f_1$ and 
$\tilde f_2$, respectively, we may assume in what follows that $H\in\UC(\gS\tim\R^n)$.

We select a function $\psi\in C^1(\ol\gO)$ so that $D_\gg\psi(x)>0$ for all 
$x\in\gG$. Let $\gd>0$ and set 
\[
u_\gd(x)=u(x)-\gd\psi(x) \ \hb{ and } \ v_\gd(x)=v(x)+\gd \psi(x) 
\ \ \hb{ for }\, x\in\ol\gS.
\]
In view of the uniform continuity of $H$, selecting $\gd>0$ small enough, 
replacing $f_1,\,f_2$ by a new ones if necessary, we may assume that 
$u_\gd$ (resp., $v_\gd$) is a subsolution (resp., a supersolution) of 
(\ref{eq:b1}), with $g$ and $f$ replaced respectively 
by $g-\gep$ (resp., $g+\gep$), where $\gep$ 
is a positive constant and by $f1$ (resp., $f_2$).  
We may also assume that (\ref{eq:c2}) holds with 
$u_\gd$ and $v_\gd$ in place of $u$ and $v$, respectively. 
Henceforth we replace $u$ and $v$ by 
$u_\gd$ and $v_\gd$ in our notation, respectively.  

If $\sup_{\gG_U}(u-v)<\max_{\ol\gS}(u-v)$, then we have $\max_{\pl\gS}(u-v) 
<\max_{\ol\gS}(u-v)$ and get a contradiction by 
arguing as in the standard proof (in the case of the Dirichlet boundary condition) 
of comparison results where the Lipschitz 
continuity of $u$ is available.  

Thus we assume henceforth that $\sup_{\gG_U}(u-v)=\max_{\ol\gS}(u-v)$. 
Then the function $u-v$ attains a maximum at a point $z\in\gG_U$.  
By replacing $U$ by an open ball $\Int B(z,r)$, with $r>0$ sufficiently small, 
and by translation,      
we may assume that $z=0$, $\gO_U=\gO_r$ and $\gG_U=\gG_r$, where $\gO_r$ and $\gG_r$ 
are the sets given by (\ref{eq:b3+}).  
We set $\tilde\gg=\gg(0)/|\gg(0)|^2$,  
\[
\tilde u(x)=u(x)-g(0)\tilde\gg\cdot x-|x|^2 \ 
\hb{ and } \ \tilde v(x)=v(x)-g(0)\tilde\gg\cdot x \ \  \hb{ for }\,\ol\gS.\]
Note that 
$\tilde u-\tilde v$ attains a strict maximum at the origin and that 
$w:=\tilde u$ is a solution of
\[\left\{
\begin{aligned}
&H(x,Dw(x)+g(0)\tilde\gg+2x)\leq f_1(x) \ \ \ \hb{ in }\gO_r,\\
&D_\gg Dw(x)\leq g(x)-g(0)\gg(x)\cdot\tilde\gg-2\gg(x)\cdot x-\gep \ \ \hb{ on }\gG_r,
\end{aligned}
\right.
\]  
and $w:=\tilde v$ is a solution of
\[\left\{
\begin{aligned}
&H(x,Dw(x)+g(0)\tilde\gg)\geq f_2(x) \ \ \hb{ in }\gO_r,\\
&D_\gg Dw(x)\geq g(x)-g(0)\gg(x)\cdot\tilde\gg+\gep \ \ \hb{ on }\gG_r,
\end{aligned}
\right.
\]
Replacing $r>0$ 
by a smaller positive number, we may assume that $w:=\tilde u$ is a solution of 
\[\left\{
\begin{aligned}
&H(x,Dw(x)+g(0)\tilde\gg)\leq f_1(x)+\gep \ \ \ \hb{ in }\gO_r,\\
&D_\gg Dw(x)\leq -\fr \gep 2 \ \ \hb{ on }\gG_r,
\end{aligned}
\right.
\]  
and $w:=\tilde v$ is a solution of 
\[\left\{
\begin{aligned}
&H(x,Dw(x)+g(0)\tilde\gg)\geq f_2(x) \ \ \hb{ in }\gO_r,\\
&D_\gg Dw(x)\geq \fr \gep 2 \ \ \hb{ on }\gG_r,
\end{aligned}
\right.
\]
Reselecting $\gep>0$ small enough if necessary, 
we may assume that $\max_{\ol\gO_r}(f_1+\gep-f_2)<0$.  
In the argument which follows, 
we write $u$, $v$, $f_1$ and $H$ for the functions $\tilde u$, $\tilde v$, 
$f_1+\gep$ and $H(x,p+g(0)\tilde\gg)$, respectively. 

Let $\gz\in C^\infty(\R_+^n\tim\R^n)$ be the function from Lemma \ref{thm:b3}. 
Set $\phi(x,y)=\gz(\gg(0),x-y)$. For $\ga>1$ we consider the function 
$
\Phi(x,y):=u(x)-v(y)-\ga\phi(x,y)
$
on $\ol\gS\tim\ol\gS$. Let $(x_\ga,y_\ga)\in\ol\gS^2$ be a maximum 
point of $\Phi$. Since $u-v$ attains a strict maximum at the origin, 
we deduce easily that $x_\ga,\,y_\ga\to 0$ as $\ga\to\infty$. 
Let $C_1>0$ be the Lipschitz constant of the function $u$. Then, 
since $\Phi(y_\ga,y_\ga)\leq\Phi(x_\ga,y_\ga)$, we find that  
$\ga\phi(x_\ga,u_\ga)\leq C_1|x_\ga-y_\ga|$, from which we get 
$\ga |x_\ga-y_\ga|\leq C_2$, where $C_2>0$ is a constant independent of $\ga$. 
If $x_\ga,\,y_\ga\in\gO_r$, then we have 
\[
H(x_\ga, D_x\phi(x_\ga,y_\ga))\leq f_1(x_\ga) \ \hb{ and } \  
H(y_\ga, -D_y\phi(x_\ga,y_\ga))\geq f_2(y_\ga). 
\]
Here, noting that $D_x\phi(x,y)+D_y\phi(x,y)=0$, we find that
\begin{equation}\label{eq:c3}
H(x_\ga, D_x\phi(x_\ga,y_\ga))\leq f_1(x_\ga) \ \hb{ and } \  
H(y_\ga, D_x\phi(x_\ga,y_\ga))\geq f_2(y_\ga). 
\end{equation} 
Assume instead that $x_\ga\in\gG_r$. By the viscosity property of $u$, we have 
either 
\[
H(x_\ga, D_x\phi(x_\ga,y_\ga))\leq f_1(x_\ga) \ \hb{ or } \   
\gg(x_\ga)\cdot D_x\phi(x_\ga,y_\ga)\leq-\fr \gep 2.
\]
Compute that
\[
\gg(x_\ga)\cdot D_x\phi(x_\ga,y_\ga)
=\gg(x_\ga)\cdot D_z\gz(\gg(0),x_\ga-y_\ga)
\geq \gg_n(0)\cdot (-y_{\ga n})-C_2C_3\go_\gg(|x_\ga|), 
\]
where $C_3>0$ is a constant, independent of $\ga$, 
such that $|D_z\gz(\gg(0),z)|\leq C_3|z|$ for 
$z\in\R_+^n\tim\R^n$, $\go_\gg$ is the modulus of 
continuity of $\gg$ on $\gG$ and $y_{\ga n}:=e_n\cdot y$.  
Accordingly, if $\ga$ is large enough, then we have
\[
\gg(x_\ga)\cdot D_x\phi(x_\ga,y_\ga)>-\fr \gep 2. 
\]
Thus, we have $H(x_\ga,\,D_x\phi(x_\ga,y_\ga))\leq f_1(x_\ga)$ 
if $\ga$ is large enough. 
Similarly, in the case where $y_\ga\in\gG_r$, we have 
$H(y_\ga, D_x\phi(x_\ga,y_\ga))\geq f_2(y_\ga)$ 
if $\ga$ is large enough. 
Now, assuming $\ga$ is large enough, we always have (\ref{eq:c3}), from which get 
a contradiction, $f_1(0)\geq f_2(0)$, by taking the limit as $\ga\to\infty$.   

We next turn to the case where $\pl U\cap\ol\gO=\emptyset$.  
We have
\[
\gO=(\gO\cap U)\cup (\gO\cap U^c)=(\gO\cap U) \cup\left(\gO\cap \Int(U^c)\right).
\]
Since $\gO$ is connected and $\gO\cap U=\gS\not=\emptyset$, 
we see that $\gO\cap \Int(U^c)=\emptyset$ and 
$\ol\gO\subset U$. 
We thus need to show that 
\[
\sup_{\ol\gO}(u-v)=-\infty. 
\]
Indeed, if $\max_{\ol\gO}(u-v)\in\R$, then the argument in the previous case 
yields a contradiction. The proof is now complete. 
\end{proof}

\begin{thm}\label{thm:c4}  
Let $u\in\USC(\ol\gS\tim[0,\,T))$ and $v\in\LSC(\ol\gS\tim[0,\,T))$  
be respectively a subsolution and a supersolution 
of {\em(\ref{eq:b17})}.  
Assume that  
$u\leq v$ on $\ol\gS \tim\{0\}\,\cup\,(\pl U\cap\ol\gO)\tim(0,\,T)$. 
Then $u\leq v$ in $\ol\gS\tim[0,\,T)$.    
\end{thm}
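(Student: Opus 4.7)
The plan is to adapt the proof of Theorem \ref{thm:c1} to the parabolic setting, the new ingredient being only the standard doubling in the time variable. I will reduce to a situation with strict inequalities and then run a doubling-variables argument with the boundary-adapted penalty $\zeta(\gg(0),x-y)$ supplied by Lemma \ref{thm:b3}.

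First I would argue by contradiction, assuming $M:=\sup_{\ol\gS\tim[0,T)}(u-v)>0$. Replacing $u(x,t)$ by $u(x,t)-\eta/(T-t)-\eta t$ for small $\eta>0$ and shrinking $T$ if needed, I may assume $u$ satisfies $u_t+H(x,Du)\le f(x,t)-\eta$ in $\gO_U\tim(0,T)$, that $u\to-\infty$ as $t\to T^-$ uniformly in $x$, and that $M$ is still positive; the supremum is then attained at some point $(\hat x,\hat t)\in\ol\gS\tim(0,T)$. Replacing $H$ by $\widetilde H(x,p):=\min\{H(x,p)-f_1(x),1\}$ as in Theorem \ref{thm:c1} removes the roles of coercivity and of $f$ from the subsequent analysis; coercivity has already been used (via the parabolic analogue of Lemma \ref{thm:c3}) to give Lipschitz continuity of $u$ in the spatial variable, uniformly on compact time intervals, so the sup-convolution type estimates in the doubling go through. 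Adding $\pm\gd\psi(x)$ with $\psi\in C^1(\ol\gO)$ satisfying $D_\gg\psi>0$ on $\gG$, I may further assume the boundary inequalities are strict, namely $D_\gg u\le g-\gep$ and $D_\gg v\ge g+\gep$ on $\gG_U\tim(0,T)$ for some $\gep>0$.

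Since $u\le v$ on $\ol\gS\tim\{0\}\cup(\pl U\cap\ol\gO)\tim(0,T)$ and $u(\cdot,t)\to-\infty$ as $t\to T^-$, the maximum point $(\hat x,\hat t)$ lies in $\gS\tim(0,T)$. If $\hat x\in\gO_U$, the standard interior parabolic comparison argument yields the contradiction. The essential case is $\hat x\in\gG_U$; localizing by an open ball, translating so that $\hat x=0$, and subtracting $g(0)\tilde\gg\cdot x+|x|^2$ (with $\tilde\gg=\gg(0)/|\gg(0)|^2$) from both $u$ and $v$ as in the proof of Theorem \ref{thm:c1}, I reduce to functions $\tilde u$, $\tilde v$ for which $\tilde u-\tilde v$ still attains a strict maximum at $(0,\hat t)$ and the boundary inequalities become $D_\gg\tilde u\le -\gep/2$ and $D_\gg\tilde v\ge\gep/2$ on $\gG_r\tim(0,T)$. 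Then I consider
\[
\Phi(x,t,y,s):=\tilde u(x,t)-\tilde v(y,s)-\ga\,\gz(\gg(0),x-y)-\fr1\gep(t-s)^2-\gb(|x|^2+|y|^2+(t-\hat t)^2+(s-\hat t)^2),
\]
with $\ga$ large and $\gb>0$ small fixed, and let $(x_\ga,t_\ga,y_\ga,s_\ga)$ be a maximizer on $\ol\gS^2\tim[0,T)^2$. Standard arguments show $x_\ga,y_\ga\to 0$ and $t_\ga,s_\ga\to\hat t$, and that $\ga|x_\ga-y_\ga|$ and $|t_\ga-s_\ga|/\gep$ stay bounded.

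The main obstacle, as in the elliptic case, is controlling the boundary test at $x_\ga$ or $y_\ga$ on $\gG_r$. This is exactly where Lemma \ref{thm:b3} is used: the identity $\gg(0)\cdot D_z\gz(\gg(0),z)=\gg_n(0)z_n$ combined with $y_{\ga,n}\le 0$ (and $x_{\ga,n}\le 0$) gives, for $\ga$ large, the strict inequalities
\[
\gg(x_\ga)\cdot\ga D_x\gz(\gg(0),x_\ga-y_\ga)>-\tfrac\gep2,\qquad \gg(y_\ga)\cdot(-\ga D_y\gz(\gg(0),x_\ga-y_\ga))<\tfrac\gep2,
\]
so the boundary alternative of the subsolution/supersolution conditions is excluded, forcing the Hamiltonian inequalities at $(x_\ga,t_\ga)$ and $(y_\ga,s_\ga)$. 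Subtracting these and passing to the limit $\ga\to\infty$ and then $\gb\to 0$ yields $\eta\le 0$, a contradiction. The case $\pl U\cap\ol\gO=\emptyset$ is handled exactly as in Theorem \ref{thm:c1}: connectedness of $\gO$ forces $\ol\gO\subset U$, and the same argument applies since $u\to-\infty$ as $t\to T^-$ prevents an infinite supremum.
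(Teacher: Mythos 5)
Your plan follows the paper's strategy, but there is one genuine gap at the step where you invoke the Lipschitz regularity of $u$.

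You write that ``coercivity has already been used (via the parabolic analogue of Lemma~\ref{thm:c3}) to give Lipschitz continuity of $u$ in the spatial variable, uniformly on compact time intervals.'' But Lemma~\ref{thm:c5} is a conditional statement: its \emph{hypothesis} is that the family $\{u(x,\cdot)\mid x\in\gS\}$ is already equi-Lipschitz in $t$ with some constant $R$, and only under that hypothesis does it give Lipschitz continuity in $(x,t)$. For a mere $\USC$ subsolution of (\ref{eq:b17}) no spatial Lipschitz bound follows directly: at a touching point, the test function's time derivative $\phi_t$ is unconstrained, so $|D\phi|$ can be as large as one likes without violating $\phi_t+H(x,D\phi)\le f$. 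The paper closes this loophole by first taking the sup-convolution of $u$ in the $t$-variable (after the $\gd/(T-t)$ perturbation), which makes $u(x,\cdot)$ equi-Lipschitz in $t$ on shrunk intervals, and only then applies Lemma~\ref{thm:c5}. Your write-up does not include the time sup-convolution, so the application of Lemma~\ref{thm:c5} is not justified.

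This matters downstream: without Lipschitz continuity of $u$ in $x$ uniformly in $t$, the claim that $\ga|x_\ga-y_\ga|$ stays bounded does not hold by ``standard arguments'' (one only gets $\ga|x_\ga-y_\ga|^2\to0$ from the doubling). The bound on $\ga|x_\ga-y_\ga|$ is what keeps $p_\ga:=\ga D_x\gz(\gg(0),x_\ga-y_\ga)$ in a compact set, and it is also what controls the error term $(\gg(x_\ga)-\gg(0))\cdot\ga D_x\gz(\gg(0),x_\ga-y_\ga)$ in your boundary estimate, since that term is bounded by $C\,\go_\gg(|x_\ga|)\cdot\ga|x_\ga-y_\ga|$. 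Both the final Hamiltonian comparison and the exclusion of the oblique-derivative alternative hinge on it. Insert the $t$-sup-convolution step (as the paper does, noting that this only costs a shrinking of $U$ and of the time interval plus a $\go(\gd)$ error in $f$, which is harmless once $\gep$ is adjusted), and the rest of your argument goes through. Minor remarks: the paper uses $\ga(t-s)^2$ rather than a fixed $\tfrac1\gep(t-s)^2$ and subtracts $|x|^2$ only from $u$ to make the maximum strict; your $\gb$-penalization achieves the same strictness and the fixed-$\gep$ time penalty is also workable, so those deviations are cosmetic.
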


\begin{lem}\label{thm:c5}
Assume that $f\in C(\gS\tim(0,\,T))$ is bounded on $\gS\tim(0,\,T)$. 
Then for any $R>0$ there is a constant $C_R>0$, depending only on $R$, 
$H$, $f$ and $\gO$, for which 
if $u\in\USC(\gS\tim(0,\,T))$ is a subsolution of {\em(\ref{eq:b17})}
and if the family $\{u(x,\cdot)\mid x\in\gS\}$, is equi-Lipschitz 
continuous on $(0,\,T)$ with Lipschitz constant $R$,   
then the function $u$ is Lipschitz continuous on $\gS\tim(0,\,T)$ with 
Lipschitz constant $C_R$.      
\end{lem}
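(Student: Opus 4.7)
The plan is to freeze the time variable and reduce the parabolic spatial estimate to Lemma~\ref{thm:c3} applied slicewise. Fix $t_0\in(0,T)$ and set $u_{t_0}(x):=u(x,t_0)$. The goal will be to prove that $u_{t_0}$ is a viscosity subsolution of the elliptic problem (\ref{eq:b1}) with right-hand side $f(\cdot,t_0)+2R$ and unchanged boundary datum $g$, and then to invoke Lemma~\ref{thm:c3} with a constant that is uniform in $t_0$.

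To pass from the parabolic subsolution property of $u$ to the elliptic subsolution property of $u_{t_0}$, I would take $\phi\in C^1(\ol\gS)$ at which $u_{t_0}-\phi$ attains a local maximum at $y\in\ol\gS$, rendered strict by adding $|x-y|^2$ to $\phi$. For each large $M>0$ introduce the $C^1$ test function $\Phi(x,t):=\phi(x)+M(t-t_0)^2$ and choose a maximum point $(y_M,t_M)$ of $u-\Phi$ on a fixed compact neighborhood of $(y,t_0)$ in $\ol\gS\tim\R$, using the USC and local boundedness of $u$. The equi-Lipschitz-in-$t$ hypothesis yields $u(y,t_0)-u(y,t)\leq R|t-t_0|$, so testing against the competitor $(y,t_0)$ produces $M(t_M-t_0)^2\leq R|t_M-t_0|+o(1)$ as $y_M\to y$, whence $|t_M-t_0|\leq R/M$ and, by strictness, $y_M\to y$. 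Applying the parabolic subsolution property of $u$ at $(y_M,t_M)$, and noting that $\Phi_t(y_M,t_M)=2M(t_M-t_0)\in[-2R,2R]$, letting $M\to\infty$ gives either $H(y,D\phi(y))\leq f(y,t_0)+2R$ or, in the case $y\in\gG_U$, the alternative $D_\gg\phi(y)\leq g(y)$. This is the required elliptic subsolution property.

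With this in hand, Lemma~\ref{thm:c3} applied to $u_{t_0}$ with right-hand side $f(\cdot,t_0)+2R$ provides a spatial Lipschitz constant $C$ depending only on $R$, $H$, $\|f\|_\infty$ and $\gO$, hence independent of $t_0$. Combining $|u(x,t)-u(y,t)|\leq C|x-y|$ with the assumed $|u(x,s)-u(x,t)|\leq R|s-t|$ via the triangle inequality yields $|u(x,s)-u(y,t)|\leq C|x-y|+R|s-t|\leq C_R(|x-y|+|s-t|)$ with $C_R:=C+R$, which is the claimed global Lipschitz estimate.

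The main obstacle I foresee is the perturbation step above: verifying that the maximizers $(y_M,t_M)$ of $u-\Phi$ exist on the chosen compact neighborhood and concentrate at $(y,t_0)$ as $M\to\infty$. This relies on the strict local maximum of $u_{t_0}-\phi$ at $y$ and on the $R$-Lipschitz control in $t$, and must be carried out carefully for boundary points $y\in\gG_U$ where one of the two alternatives in the subsolution property may persist along the subsequence. Once the elliptic subsolution property is established, the substantive analysis---converting the HJ inequality to a gradient bound via coercivity, and propagating Lipschitz continuity to the boundary via the cone-distance barrier---is entirely subsumed by Lemma~\ref{thm:c3}.
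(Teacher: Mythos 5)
Your proof is correct and follows essentially the same route as the paper's: freeze the time variable, show the spatial slice $u(\cdot,t_0)$ is a subsolution of the elliptic Neumann problem (with right-hand side shifted by a constant controlled by the $t$-Lipschitz bound $R$), invoke Lemma~\ref{thm:c3} slicewise, and combine with the assumed time regularity; the paper compresses the slicewise subsolution step into a single ``it is easily seen,'' whereas you spell out the underlying sup-in-time penalization argument. One small caveat: if you fold the strictness term $|x-y|^2$ into $\phi$ \emph{before} forming $\Phi(x,t)=\phi(x)+M(t-t_0)^2$ (so that $y$ remains the spatial maximizer of $u(\cdot,t_0)-\phi$), the chain of inequalities yields $M(t_M-t_0)^2\le R|t_M-t_0|$ exactly, with no $o(1)$ remainder; as written, the $o(1)$ term you introduce gives only $M|t_M-t_0|\le R+\sqrt{M\cdot o(1)}$, which does not by itself control $\Phi_t(y_M,t_M)=2M(t_M-t_0)$ as $M\to\infty$, so the cleaner ordering of perturbations is actually needed to close the argument.
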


\begin{proof} Fix any $R>0$. As in the proof of Lemma \ref{thm:c3}, there is a constant 
$M_R>0$, 
depending only on $R$, $H$ and $f$, such that 
for $(x,\,p)\in \gS\tim\R^n$, if $H(x,p)\leq f(x)+R$, then 
$|p|\leq M_R$. Let $u\in\USC(\gS\tim(0,\,T))$ be  
a subsolution of (\ref{eq:b17}), and assume that 
the family $\{u(x,\cdot)\mid x\in\gS\}$ is 
equi-Lipschitz continuous on $(0,\,T)$ with Lipschitz constant $R$. 
Then, it is easily seen that for each $t\in(0,\,T)$, 
the function $u(\cdot,t)$ is a subsolution of (\ref{eq:b1}), with 
$H(x,p)$ and $f(x)$ replaced by $|p|$ and $C_0$, respectively. 
By Lemma \ref{thm:c3}, there is a constant $C_R\geq R$, depending only on 
$M_R$ and $\gO$, such that the family $\{u(\cdot,t)\mid 0<t<T\}$ is equi-Lipschitz 
continuous on $\gS$, with Lipschitz constant $C_R$. Then we have 
$|u(x,t)-u(y,s)|\leq C_R(|x-y|+|t-s|)$ for all $(x,t),\, (y,s)\in\gS\tim(0,\,T)$ and finish 
the proof.      
\end{proof}

\begin{proof}[Proof of Theorem {\em\ref{thm:c4}}]
We follow the line of the proof of Theorem \ref{thm:c1}. 
For $S<T$ we write   
\[
\pl'_p (\gS\tim (S,\,T))=\ol\gS\tim\{S\}\,\cup\, \left(\pl U\cap\ol\gO\right) \tim (S,\,T).    
\]
It is enough to show that 
\begin{equation}\label{eq:c4}
\sup_{Q_T}(u-v)\leq \sup_{\pl'_p Q_T}(u-v), 
\end{equation}
where $Q_T=\gS\tim (0,T)$.  

To prove (\ref{eq:c4}), we suppose, on the contrary, that 
\begin{equation}\label{eq:c5}
\sup_{Q_T}(u-v)>\sup_{\pl'_p Q_T}(u-v). 
\end{equation}
Let $\gd>0$ and set 
\[
\tilde u(x,t)=u(x,t)-\fr{\gd}{T-t} \ \ \hb{ for }(x,t)\in Q_T. 
\]
By replacing $u$ by $\tilde u$, we may assume that $u$ is a subsolution 
of (\ref{eq:b17}) with $f(x)$ replaced by $f(x)-\gep$, where $\gep>0$ is a constant,   
and that 
\[
\lim_{t\to T-}\sup_{x\in\ol\gS}(u-v)(x,t)=-\infty.  
\]   
By taking the sup-convolution of $u$ in the $t$-variable, 
replacing $U$ and the interval $(0,\,T)$ by smaller (in the sense of inclusion) ones, 
and translating the smaller interval, 
we may assume that $f$ is uniformly continuous on $Q_T$ and the family 
$\{u(x,\cdot)\mid x\in\ol\gS\}$ is equi-Lipschitz continuous 
on $(0,\,T)$. According to Lemma \ref{thm:c5}, 
the function $u$ is Lipschitz continuous on $Q_T$. 
Next, we may replace $H$ by a uniformly continuous function on $\gS\tim\R^n$. 
By perturbing $u$ (resp., $v$) as in the proof of Theorem \ref{thm:c1} and 
replacing $\gep>0$ by a smaller positive number, 
we may assume that $u$ (resp., $v$) is a subsolution (resp., a supersolution) 
of (\ref{eq:b17}), with $f(x,t)$ and $g(x)$ replaced by $f(x,t)-\gep$ and $-\gep$ 
(resp., $f(x,t)$ and $\gep$). Moreover, we may assume that $u-v$ attains a
strict maximum at a point $(z,\tau)\in\gG_U\tim(0,\,T)$.  
Furthermore, we may assume that $z=0$, $U=\Int B(0,r)$, $\gO_U=\gO_r$ and 
$\gG_U=\gG_r$, where $r>0$ and $\gO_r,\, \gG_r$ are the sets given by (\ref{eq:b3+}). 

Now we consider the function 
\[
\Phi(x,t,y,s)=u(x,t)-v(y,s)-\ga\phi(x,y)-\ga(t-s)^2
\]
on the set $\ol Q_T \tim \ol Q_T$, where 
$\ga>1$ is a constant and $\phi$ is the function used in the proof of Theorem \ref{thm:c1}. 
Let $(x_\ga,y_\ga)\in \ol Q_T\tim \ol Q_T$ be a maximum point of $\Phi$. 
Arguing as in the proof of Theorem \ref{thm:c1}, we see that if $\ga$ is sufficiently 
large, then we always have 
\begin{equation}\label{eq:c6}
\left\{
\begin{aligned}
&2\ga(t_\ga-s_\ga)+H(x_\ga,\,\ga D_x\phi(x_\ga,\,y_\ga))\leq f(x_\ga)-\gep, \\
&2\ga(t_\ga-s_\ga)+H(y_\ga,\,\ga D_x\phi(x_\ga,\,y_\ga))\geq f(y_\ga).
\end{aligned}\right.
\end{equation}
Also, using the Lipschitz continuity of $u$, we find that for some constant $C>0$, 
independent of $\ga$, 
\[
\ga|t_\ga-s_\ga|+\ga|x_\ga-y_\ga|\leq C. 
\]   
Sending $\ga\to\infty$ in (\ref{eq:c6}) yields a contradiction. 
\end{proof}

\section{Skorokhod problem}

In this section we are concerned with the Skorokhod problem. 
We recall that $\R_+=(0,\,\infty)$ and hence $\ol\R_+=[0,\,\infty)$. 
We denote by $L^1_{\rm loc}(\ol\R_+,\,\R^k)$ (resp., 
$\AC_{\rm loc}(\ol\R_+,\,\R^k)$) the space of functions $v:\ol\R_+\to\R^k$ 
which are integrable (resp., absolutely continuous) 
on any bounded interval $I\subset\ol\R_+$.  

Given $x\in\ol\gO$ and 
$v\in L^1_{\rm loc}(\ol\R_+,\R^n)$, the Skorokhod problem is to seek for a pair of 
functions, $(\eta,\,l)\in\AC_{\rm loc}(\ol\R_+,\R^n)\tim L^1_{\rm loc}(\ol\R_+,\,\R)$, such that \smallskip
\begin{equation}\label{eq:s1} 
\left\{
\begin{aligned}
&\eta(0)=x, \ \ \ \ \ \eta(t)\in\ol\gO \ \ \hb{ for }t\in \ol\R_+, \\
&\dot\eta(t)+l(t)\gg(\eta(t))=v(t) \ \ \hb{ for a.e. }t\in \ol\R_+,  \\  
&l(t)\geq 0, \ \ \ l(t)=0 \ \hb{ if } \eta(t)\in\gO \ \ \hb{ for a.e. }t\in \ol\R_+. 
\end{aligned}
\right.
\end{equation}

Regarding the solvability of the Skorokhod problem, our main result is the following.

\advance\tolerance by 100

\begin{thm}
\label{thm:s0} Let $v\in L^1_{\rm loc}(\ol\R_+,\,\R^n)$ and $x\in\ol\gO$.   
Then there exits a pair $(\eta,\,l)\in \AC_{\rm loc}(\ol\R_+,\,\R^n)\tim 
L^1_{\rm loc}(\ol\R_+,\,\R)$ 
such that {\em(\ref{eq:s1})} holds. 
\end{thm}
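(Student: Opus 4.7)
The plan is to construct $(\eta,l)$ by a Yosida-type penalization, in the spirit of Lions--Sznitman and Dupuis--Ishii \cite{DupuisIshii_90}. Thanks to (A3) and the $C^1$ regularity of $\pl\gO$, one can extend $\gg$ to a bounded Lipschitz vector field $\tilde\gg$ on $\R^n$ and construct a nonnegative $\psi\in C^1(\R^n)$ that vanishes on $\ol\gO$, is positive outside $\ol\gO$, and satisfies $D\psi\cdot\tilde\gg\geq c>0$ in a tubular neighborhood of $\gG$, for some constant $c$. This $\psi$ will serve both as a barrier pushing trajectories back into $\ol\gO$ and as a test function for the a priori estimates.

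For each $\gep>0$, I would solve the Carath\'eodory ODE
\[
\dot\eta_\gep(t)=v(t)-\fr{1}{\gep}\psi(\eta_\gep(t))\tilde\gg(\eta_\gep(t)), \qquad \eta_\gep(0)=x,
\]
whose solution $\eta_\gep\in\AC_{\rm loc}(\ol\R_+,\R^n)$ exists by standard theory for $v\in L^1_{\rm loc}$. Setting $l_\gep(t):=\psi(\eta_\gep(t))/\gep\geq 0$, we have $\dot\eta_\gep+l_\gep\,\tilde\gg(\eta_\gep)=v$ a.e. Differentiating $t\mapsto\psi(\eta_\gep(t))$ along the trajectory and using $D\psi\cdot\tilde\gg\geq c$, I would obtain
\[
\fr{d}{dt}\psi(\eta_\gep(t))+c\,l_\gep(t)\leq \|D\psi\|_\infty |v(t)| \ \ \hb{a.e.}
\]
Integration over $[0,T]$ yields a uniform bound $\|l_\gep\|_{L^1(0,T)}\leq C(T,\|v\|_{L^1(0,T)})$ as well as the convergence $\psi(\eta_\gep)\to 0$ in $L^1((0,T))$ as $\gep\to 0$.

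The $L^1$ bound on $l_\gep$ together with the integrability of $v$ makes $\{\eta_\gep\}$ equi-absolutely-continuous on bounded intervals; Arzel\`a-Ascoli extracts $\eta_{\gep_j}\to\eta$ locally uniformly, and the $L^1$ smallness of $\psi(\eta_\gep)$ forces $\eta(t)\in\ol\gO$. Setting $L_\gep(t):=\int_0^t l_\gep(s)\d s$, Helly's theorem gives, up to a further subsequence, convergence to a nondecreasing $L$, and passing to the limit in the integral form of the ODE identifies $v\d t-\d\eta=\gg(\eta)\d L$ as vector measures.

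The main technical obstacle is to upgrade $L$ from merely BV to an absolutely continuous function (so that $l=L'\in L^1_{\rm loc}$) and to verify that $l(t)=0$ whenever $\eta(t)\in\gO$. Both points are handled by a localization argument using $\psi$ and the obliqueness (A3): on any subinterval where $\eta$ stays near $\gG$, the identity $\gg(\eta)\,\d L=v\d t-\d\eta$ combined with $\gg\cdot\nu>0$ allows one to solve for $\d L$ as an absolutely continuous measure with $L^1$ density; on subintervals where $\eta$ remains in the open set $\gO$, the penalization term $l_\gep$ is supported in $\{\psi(\eta_\gep)>0\}$, a set whose $\eta_\gep$-image stays outside $\ol\gO$, which is incompatible in the uniform limit with $\eta$ being interior, forcing $L$ to be constant there and hence $l=0$.
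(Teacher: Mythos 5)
Your penalization scheme is the same as the paper's, but you try to run it directly for $v\in L^1_{\rm loc}$ and that is where the argument breaks. The paper proceeds in two steps: it first proves the result for $v\in L^\infty$ (Theorem~\ref{thm:s1}), obtaining an \emph{$L^\infty$} bound on $l_\gep$ via the ``test with $mq^{m-1}Dq$, then send $m\to\infty$'' trick (and using the truncated kernel $q=(\psi\vee 0)\wedge\gd$ rather than $\psi$ itself); only then does it deduce the $L^1_{\rm loc}$ case by truncating $v$ to $v_k:=v\1_{\{|v|\le k\}}$, solving the Skorokhod problem for each $v_k$, and passing to the limit via Proposition~\ref{thm:s3}, whose hypothesis is \emph{uniform integrability} of $\{|v_k|\}$, which in turn propagates to $\{\dot\eta_k\}$ and $\{l_k\}$ through the \emph{pointwise} estimate $l(s)\vee|\dot\eta(s)|\le C|v(s)|$ of Proposition~\ref{thm:s2}.

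The gap in your argument is precisely at the step you flag as the ``main technical obstacle,'' and your sketch of a fix does not close it. From the differential inequality
$\frac{d}{dt}\psi(\eta_\gep)+c\,l_\gep\le\|D\psi\|_\infty|v|$
you get only $\|l_\gep\|_{L^1(0,T)}\le C$, not uniform integrability, and certainly not a pointwise bound $l_\gep(t)\lesssim|v(t)|$. A uniform $L^1$ bound does \emph{not} make $\{\eta_\gep\}$ equi-absolutely-continuous, nor even equicontinuous: nothing prevents $l_\gep$ from concentrating near a point, producing a near-jump of $\eta_\gep$. Consequently the Arzel\`a--Ascoli/Helly extraction gives you only a continuous $\eta$ (of possibly unbounded variation) and a BV limit $L$, and the vector-measure identity $\gg(\eta)\,\d L=v\,\d t-\d\eta$ is circular as a route to absolute continuity: the singular part of $\d L$ exactly feeds the singular part of $\d\eta$, so you cannot ``solve'' for $\d L$ and conclude it has an $L^1$ density. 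The pointwise bound $l\le\gd_0^{-1}|v|$ that makes everything work in the paper comes from $\nu(\eta(s))\cdot\dot\eta(s)=0$ when $\eta(s)\in\gG$, which holds for \emph{exact} Skorokhod trajectories (they stay in $\ol\gO$) but fails for the penalized $\eta_\gep$ (which leave $\ol\gO$). This is why the paper is forced to go through the $L^\infty$ case and then truncate, rather than penalizing with $v\in L^1_{\rm loc}$ directly.

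To repair your proof you would either reproduce the paper's two-step scheme (prove $v\in L^\infty$ first, truncate, then appeal to a compactness result under uniform integrability such as Proposition~\ref{thm:s3}), or supply a genuinely new mechanism for uniform integrability of $\{l_\gep\}$ directly from the penalized ODE — which the differential inequality you wrote does not provide.
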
 

\advance\tolerance by -100

We are interested in ``regular'' solutions in the above theorem. 
See \cite{LionsSznitman_84} and references therein for more general viewpoints 
on the Skorokhod problem. The advantage of the above result is in that it applies to 
domains with $C^1$ boundary.

A natural question is the uniqueness of the solution $(\eta,\,l)$ in the above theorem. 
But we do now know if the uniqueness holds or not.

We first establish the following result. 

\advance\tolerance by 100

\begin{thm}\label{thm:s1}Let $v\in L^\infty(\ol\R_+,\,\R^n)$ and $x\in\ol\gO$.   
Then there exits a pair $(\eta,\,l)\in\Lip(\ol\R_+,\,\R^n)\tim L^\infty(\ol\R_+,\,\R)$ 
such that {\em(\ref{eq:s1})} holds. 
\end{thm}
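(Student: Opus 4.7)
The plan is to construct $(\eta,l)$ by a penalization scheme that mimics reflection by a strong inward push. Since $\gG$ is $C^1$, there is a one-sided tubular neighborhood $\mathcal{N}$ of $\gG$ on which the nearest-point projection $\pi:\mathcal{N}\to\gG$ and the signed distance $d(x):=\dist(x,\ol\gO)$ (with $d\leq 0$ in $\ol\gO$, $d>0$ outside) are well-defined and $C^1$. I extend $\gg$ continuously to a bounded vector field $\tilde\gg$ on $\R^n$ with $\nu(\pi(x))\cdot\tilde\gg(x)\geq c_0>0$ on $\mathcal{N}$, which is possible by (A3) and continuity. Set $\rho(x):=d(x)^+=\max\{d(x),0\}$ (extended by $0$ outside $\mathcal{N}$; for differentiability one may replace $\rho$ by a $C^1$ majorant such as the Moreau--Yosida regularization of $\tfrac12 (d^+)^2$). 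For each $\gep>0$, consider the Carath\'eodory initial-value problem
\begin{equation*}
\dot\eta_\gep(t)=v(t)-\fr 1\gep\,\rho(\eta_\gep(t))\,\tilde\gg(\eta_\gep(t)),\qquad \eta_\gep(0)=x,
\end{equation*}
which admits an absolutely continuous solution on $\ol\R_+$ by standard existence theory; set $l_\gep(t):=\rho(\eta_\gep(t))/\gep$.

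The key a priori bound is $\|l_\gep\|_\infty\leq\|v\|_\infty/c_0$. To prove it, differentiate $\rho\circ\eta_\gep$ where $\eta_\gep\in\mathcal{N}\setminus\ol\gO$: since $D\rho=\nu\circ\pi$ there,
\begin{equation*}
\fr{d}{dt}\rho(\eta_\gep(t))=\nu(\pi(\eta_\gep))\cdot v(t)-\fr 1\gep\,\rho(\eta_\gep)\,\nu(\pi(\eta_\gep))\cdot\tilde\gg(\eta_\gep)\leq\|v\|_\infty-\fr{c_0}{\gep}\rho(\eta_\gep).
\end{equation*}
Combined with $\rho(\eta_\gep(0))=0$ and a Gronwall-type comparison, this gives $\rho(\eta_\gep(t))\leq\gep\|v\|_\infty/c_0$, which in particular confines $\eta_\gep$ to $\mathcal{N}$ for all $t\geq 0$ (once $\gep$ is small enough that the bound lies within $\mathcal{N}$) and yields the claim. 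Consequently $\dot\eta_\gep$ is uniformly bounded in $L^\infty$, so $\{\eta_\gep\}$ is equi-Lipschitz on $\ol\R_+$.

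To pass to the limit, Arzel\`a--Ascoli together with a diagonal argument extracts a subsequence $\eta_{\gep_k}\to\eta$ locally uniformly with $\eta\in\Lip(\ol\R_+,\R^n)$, while weak-$*$ compactness in $L^\infty$ produces $l_{\gep_k}\rightharpoonup^* l$ for some nonnegative $l\in L^\infty(\ol\R_+,\R)$. Since $\rho(\eta_\gep)\leq\gep\|v\|_\infty/c_0$ and $\rho>0$ off $\ol\gO$, we have $\eta(t)\in\ol\gO$ for every $t$. Moreover $\pi(\eta_{\gep_k})\to\eta$ locally uniformly, so $\tilde\gg(\eta_{\gep_k})\to\gg(\eta)$ locally uniformly; passing to the limit in the integrated form of the penalized ODE gives $\dot\eta=v-l\,\gg(\eta)$ a.e. Finally, on any relatively compact open subinterval $I$ of $\{t:\eta(t)\in\gO\}$, uniform convergence forces $\eta_{\gep_k}(I)\subset\gO$ for $k$ large, so $\rho(\eta_{\gep_k})\equiv 0$ and $l_{\gep_k}\equiv 0$ on $I$; weak-$*$ convergence then delivers $l=0$ a.e.\ on $\{\eta\in\gO\}$.

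The main technical hurdle is the a priori bound on $l_\gep$: it requires quantitative exploitation of the obliqueness (A3) via the penalization direction $\tilde\gg$, together with enough regularity of $\rho$ to apply the chain rule along trajectories whose velocities are only $L^\infty$ in time. This is resolved either by working with the Moreau--Yosida smoothing of $\tfrac12(d^+)^2$ (giving a $C^{1,1}$ penalization with $D\rho=0$ on $\ol\gO$), or by invoking the chain rule for compositions of Sobolev functions with absolutely continuous curves. Once this bound is in place, the remaining compactness and identification arguments are routine.
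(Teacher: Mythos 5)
Your overall strategy --- penalizing with a strong inward push $\tfrac1\gep\rho\,\tilde\gg$, proving the a priori bound $\|l_\gep\|_\infty\lesssim\|v\|_\infty$ via obliqueness, then passing to the limit by equi-Lipschitz/weak-$*$ compactness and localizing to identify $l=0$ on $\{\eta\in\gO\}$ --- is exactly the plan the paper follows (it in turn credits Lions--Sznitman). Where you diverge, and where a genuine gap opens, is in the choice of penalty function and the way the a priori bound is derived.

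The paper does \emph{not} use the signed distance function. It fixes a $C^1$ level-set function $\psi\in C^1(\R^n)$ with $\gO=\{\psi<0\}$, $|D\psi|>1$ on $\gG$, sets $q=(\psi\vee 0)\wedge\gd$, and chooses $\gd>0$ so that $\gg\cdot D\psi\geq\gd|D\psi|$ on $\{0\leq\psi\leq\gd\}$. This costs nothing for a $C^1$ domain. Your version instead takes $\rho=d^+$ with $d=\dist(\cdot,\ol\gO)$ and assumes a one-sided tubular neighborhood on which the nearest-point projection $\pi$ and $d$ are $C^1$ with $D\rho=\nu\circ\pi$. That assumption fails for a domain that is merely $C^1$: the projection $\pi$ need not be single-valued (the reach of a $C^1$ hypersurface can be zero), so $d$ need not be differentiable arbitrarily close to $\gG$, and $D\rho$ need not point in the direction of $\nu\circ\pi$ where it does exist. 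Your fallback, the Moreau--Yosida regularization of $\tfrac12(d^+)^2$, does not repair this: inf-convolution with a quadratic produces a semiconcave function, but it is $C^{1,1}$ only when the original function is sufficiently semiconvex, which $\tfrac12(d^+)^2$ is not for a general non-convex $C^1$ domain. Even if it were $C^{1,1}$, you would lose the identity $D\rho=\nu\circ\pi$ near $\gG$ that your Gronwall estimate depends on. In contrast, with the paper's $q$ the obliqueness inequality $\gg\cdot Dq\geq\gd|Dq|$ is built in by construction wherever $Dq\neq 0$.

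There is a second, smaller divergence worth noting: to obtain the $L^\infty$ bound on $l_\gep=q(\xi_\gep)/\gep$, the paper does not differentiate $q\circ\xi_\gep$ pointwise. Instead it multiplies the ODE by $m\,q^{m-1}Dq$, integrates, and applies H\"older to get an $L^m$ estimate that it then sends $m\to\infty$. This avoids having to justify a pointwise chain rule across the corner set $\{\psi=\gd\}$ and through the region where $Dq$ changes regime, and it is cleaner than the direct Gronwall comparison you invoke along an $L^\infty$-in-time trajectory. Your limiting and localization arguments at the end are fine and coincide with the paper's. The fix to your proof is therefore straightforward: replace the distance-based $\rho$ by the paper's $q=(\psi\vee 0)\wedge\gd$ with $\psi\in C^1$, keep the obliqueness constant $\gd$ as in the paper, and either repeat your Gronwall estimate with care on the corner sets or adopt the $L^m\to L^\infty$ integral estimate.
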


\advance\tolerance by -100

We borrow some ideas from \cite{LionsSznitman_84} in the following proof.  

\begin{proof}  
We may assume that $\gg$ is defined on $\R^n$.  
Let $\psi\in C^1(\R^n)$ be such that 
$\psi(x)<0$ in $\gO$, $|D\psi(x)|>1$ for $x\in\gG$, $\psi(x)>0$ 
for $x\in\R^n\setminus \ol\gO$ and $\liminf_{|x|\to\infty}\psi(x)>0$. 
We can select a constant $\gd>0$ so that for $x\in\R^n$, 
\[
\gg(x)\cdot D\psi(x)\geq \gd|D\psi(x)| \ \ \hb{ if } 0\leq \psi(x)\leq \gd. 
\]
We set $q(x)=(\psi(x)\vee 0)\wedge\gd$ for $x\in\R^n$. 
Note that $q(x)=0$ for $x\in\ol\gO$, $q(x)>0$ for $x\in\R^n\setminus\ol\gO$, and 
$\gg(x)\cdot Dq(x)\geq \gd |Dq(x)|$ for a.e. $x\in\R^n$.   

Fix $\gep>0$ and $x\in\ol\gO$. We consider the initial value problem for the ODE
\begin{equation}\label{eq:s2}
\dot\xi(t)+\fr{1}{\gep}q(\xi(t))\gg(\xi(t))=v(t) \ \hb{ for a.e. }t\in \ol\R_+, \quad \xi(0)=x. 
\end{equation}
Here $\xi$ represents the unknown function. 
By the standard ODE theory, there is a unique solution $\xi\in 
C^1(\ol\R_+)$ of (\ref{eq:s2}).

Let $m\geq 2$. We multiply the ODE of (\ref{eq:s2}) by $mq(\xi(t))^{m-1}Dq(\xi(t))$, to get 
\[
\fr{\d }{\d t}q(\xi(t))^m+\fr m\gep q(\xi(t))^mDq(\xi(t))\cdot\gg(\xi(t))
=mq(\xi(t))^{m-1}Dq(\xi(t))\cdot v(t) \ \ \hb{a.e.}
\]
Fix any $T\in\R_+$.  Integrating over $[0,\,T]$, we get  
\begin{align*}
&q(\xi(T))^m-q(\xi(0))^m+\fr m\gep \int_0^Tq(\xi(s))^m\gg(\xi(s))\cdot Dq(\xi(s))\d s \\
&=m\int_0^T q(\xi(s))^{m-1}Dq(\xi(s))\cdot v(s)\d s.
\end{align*}
Here we have 
\[
\int_0^T q(\xi(s))^m\gg(\xi(s))\cdot Dq(\xi(s))\d s
\geq  \gd \int_0^Tq(\xi(s))^m|Dq(\xi(s))|\d s,
\]
and   
\begin{align*}
&\int_0^T q(\xi(s))^{m-1}Dq(\xi(s))\cdot v(s)\d s\\ 
&\leq \left(\int_0^Tq(\xi(s))^m|Dq(\xi(s)|\d s\right)^{1-\fr 1m}
\left(\int_0^T |v(s)|^m|Dq(\xi(s))|\d s\right)^{\fr 1m}. 
\end{align*}
Combining these, we get 
\begin{align}
&q(\xi(T))^m +\fr{m \gd}\gep \int_0^Tq(\xi(s))^m|Dq(\xi(s))|\d s\label{eq:s4}\\
&\leq m \left(\int_0^Tq(\xi(s))^m|Dq(\xi(s))|\d s\right)^{1-\fr 1m}
\left(\int_0^T |v(s)|^m|Dq(\xi(s))|\d s\right)^{\fr 1m}.\nonumber
\end{align}
From this we obtain 
\begin{equation}
\fr{ \gd}\gep \left(\int_0^Tq(\xi(s))^m|Dq(\xi(s))|\d s\right)^{\fr 1m}  
\leq 
\left(\int_0^T |v(s)|^m|Dq(\xi(s))|\d s\right)^{\fr 1m}\label{eq:s5}
\end{equation}
and
\[
q(\xi(T))^m\leq \left(\fr{\gep}{ \gd}\right)^{m-1}m\int_0^T |v(s)|^m|Dq(\xi(s))|\d s. 
\]
Hence, setting $C_0=\|Dq\|_{L^\infty(\R^n)}$, we deduce that 
\begin{equation}
q(\xi(t))^m\leq \left(\fr{\gep}{ a}\right)^{1-\fr 1m}mC_0T\|v\|_{L^\infty(0,T)}^m
 \ \ \hb{ for }t\in[0,\, T].\label{eq:s6}
\end{equation}
Henceforth we write $\xi_\gep$ for $\xi$, to indicate the dependence on $\gep$ of $\xi$.  
We see from (\ref{eq:s6}) 
that for any $T>0$, 
\begin{equation}\label{eq:s+1}
\lim_{\gep\to 0+}\max_{t\in[0,\,T]}\dist(\xi_\gep(t),\,\gO)=0. 
\end{equation}
Also, (\ref{eq:s6}) ensures that for each $T>0$ there is an $\gep_T>0$ such that 
$q(\xi_\gep(t))<\gd$ for $t\in[0,\,T]$. 

Now let $T>0$ and $0<\gep<\gep_T$. we have $q(\xi_\gep(s))
=\psi(\xi_\gep(s))\vee 0$ for all $t\in[0,\,T]$ and hence 
$q(\xi_\gep(t))^m|Dq(\xi_\gep(t))|=q(\xi_\gep(t))^m$ for a.e. $t\in(0,\,T)$. 
Accordingly, (\ref{eq:s5}) yields 
\[
\fr{ \gd}\gep \left(\int_0^Tq(\xi_\gep(s))^m\d s\right)^{\fr 1m}\leq C_0T^{\fr 1m}\|v\|_{L^\infty(0,T)}.  
\] 
Sending $m\to\infty$, we find that 
$
(\gd/\gep)\|q\circ\xi_\gep\|_{L^\infty(0,\,T)}\leq C_0\|v\|_{L^\infty(0,\,T)}$, 
and moreover 
\begin{equation}
\fr{ \gd}{\gep}\|q\circ\xi_\gep\|_{L^\infty(\R_+)}\leq C_0\|v\|_{L^\infty(\R_+)}. 
\label{eq:s7}
\end{equation}
We set $l_\gep=(1/\gep)q\circ \xi_\gep$. Due to (\ref{eq:s7}),  
we may choose a sequence $\gep_j\to 0+$ so that 
$l_{\gep_j} \to l$ weakly-star in $L^\infty(\R_+)$  
as $j\to\infty$ for a function 
$l\in L^\infty(\R_+)$. It is clear that $l(s)\geq 0$ for a.e. $s\in \R_+$.   

ODE (\ref{eq:s2}) together with (\ref{eq:s7}) guarantees 
that $\{\dot\xi_\gep\}_{\gep>0}$ is bounded in $L^\infty(\ol\R_+)$. Hence, 
we may assume as well that $\xi_{\gep_j}$ converges locally 
uniformly on $\ol\R_+$ to 
a function $\eta\in \Lip(\ol\R_+)$ as $j\to\infty$. 
It is then obvious that $\eta(0)=x$ and the pair $(\eta,\,l)$  
satisfies 
\[
\eta(t)+\int_0^t \big(l(s)\gg(\xi(s))-v(s)\big)\d s=0 \ \ \hb{ for }t>0,
\]
from which we get
\[
\dot\eta(t)+l(t)\gg(\eta(t))=v(t) \ \ \hb{ for a.e. }t\in \ol\R_+. 
\]
It follows from (\ref{eq:s+1}) that $\eta(t)\in\ol\gO$ for $t\geq 0$.

In order to show that the pair $(\eta,\,l)$ is a solution of (\ref{eq:s1}), 
we need only to prove that for a.e. $t\in \ol\R_+$, $l(t)=0$ if $\eta(t)\in\gO$.   
Set $A=\{t\geq 0\mid \eta(t)\in\gO\}$. It is clear that $A$ is an open subset of 
$[0,\,\infty)$.  We can choose a sequence $\{I_k\}_{k\in\N}$ of closed intervals of $A$ 
such that $A=\bigcup_{k\in\N}I_k$. Note that for each $k\in\N$,  the set $\eta(I_k)$ 
is a compact 
subset of $\gO$ and the convergence of $\{\xi_{\gep_j}\}$ to  
$\eta$ is uniform on $I_k$. Hence, 
for any fixed $k\in\N$, we may choose 
$J\in\N$ so that $\xi_{\gep_j}(t)\in\gO$ for all $t\in I_k$ and $j\geq J$. 
From this, we have 
$q(\xi_{\gep_j}(t))=0$ for $t\in I_k$ and $j\geq J$. Moreover, 
in view of the weak-star convergence of $\{l_{\gep_j}\}$, we find that for any 
$k\in\N$,  
\[
\int_{I_k}l(t)\d t=\lim_{j\to\infty}
\int_{I_k}\fr{1}{\gep_j}q(\xi_j(t))^m\d t=0,
\]  
which yields $l(t)=0$ for a.e. $t\in I_k$. Since $A=\bigcup_{k\in\N} I_k$, we see that 
$l(t)=0$ a.e. in $A$. 
The proof is complete.
\end{proof}

For $x\in\ol\gO$, let $\SP(x)$ denote 
the set of all triples 
\[(\eta,v,l)\in \AC_{\rm loc}(\ol\R_+,\R^n)\tim L^1_{\rm loc}(\ol\R_+,\R^n)
\tim L^1_{\rm loc}(\ol\R_+)\] which satisfies (\ref{eq:s1}). 
We set $\SP=\bigcup_{x\in\ol\gO}\SP(x)$.  

We remark that for any $x,y\in\ol\gO$ and $0<T<\infty$, there exists 
a triple $(\eta,v,l)\in\SP(x)$ such that $\eta(T)=y$. 
Indeed, given $x,y\in\ol\gO$ and $0<T<\infty$, we choose a 
curve $\eta\in\Lip([0,\,T],\ol\gO)$ so that $\eta(0)=x$ and $\eta(T)=y$. 
The existence of such a curve is guaranteed since $\gO$ is a domain  
and has the $C^1$ regularity.    
We extend the domain of definition of $\eta$ to 
$\ol\R_+$ by setting $\eta(t)=y$ for $t>T$. Now, if we set $v(t)=\dot\eta(t)$ 
and $l(t)=0$ for $t\geq 0$, we have $(\eta,v,l)\in\SP(x)$, which has the property, 
$\eta(T)=y$. 
Here and henceforth, for interval $I$, we denote by $\Lip(I,\ol\gO)$ the set of 
those $\eta\in \Lip(I,\R^n)$ such that $\eta(t)\in\ol\gO$ for $t\in I$. 
We use such notation for other spaces of functions having values in $\ol\gO\subset \R^n$ as well. 

We note also that problem (\ref{eq:s1}) has the following {\em semi-group} property: 
for any $(x,t)\in\ol\gO\tim\R_+$ and $(\eta_1,\,v_1,\,l_1),\, (\eta_2,\,v_2,\,l_2)\in\SP$, 
if $\eta_1(0)=x$ and $\eta_2(0)=\eta_1(t)$ hold and if $(\eta,\,v,\,l)$ is defined on $\ol\R_+$ by
\[
(\eta(s),\,v(s),\,l(s))=\left\{
\begin{aligned}
&(\eta_1(s),\,v_1(s),\,l_1(s)) \quad&&\hb{ for }s\in[0,\,t),\\
&(\eta_2(s-t),\,v_2(s-t),\,l_2(s-t)) \quad&& \hb{ for }s\in[t,\,\infty),
\end{aligned}
\right.
\] 
then $(\eta,\,v,\,l)\in\SP(x)$.

\begin{prop}\label{thm:s2}
There is a constant $C>0$, depending only on $\gO$ and $\gg$, such that for $(\eta,\,v,\,l)\in\SP$, 
\[ 
|\dot\eta(s)|\vee l(s)\leq C|v(s)| \ \ \hb{ for a.e. }s\geq 0. 
\]
\end{prop}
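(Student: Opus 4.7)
The plan is to combine the obliqueness condition (A3) with a first-order analysis via a $C^1$ defining function of $\gO$. Using the $C^1$ regularity and boundedness of $\gO$, I will choose $\psi\in C^1(\R^n)$ with $\psi<0$ in $\gO$, $\psi=0$ on $\gG$, and $D\psi=\nu$ on $\gG$. Since $\gg\cdot\nu$ is continuous and strictly positive on the compact set $\gG$, there is a constant $\mu>0$ with $\gg(x)\cdot\nu(x)\geq\mu$ for every $x\in\gG$. The quantity $\|\gg\|_\infty:=\sup_{x\in\gG}|\gg(x)|$ is likewise finite.

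Given $(\eta,v,l)\in\SP$, I will consider the composition $f(s):=\psi(\eta(s))$, which lies in $\AC_{\rm loc}(\ol\R_+)$, satisfies $f(s)\leq 0$ for all $s\geq 0$, and vanishes exactly on $\{s\mid \eta(s)\in\gG\}$. At any differentiability point $s$ of $f$ at which $f(s)=0$, $s$ is a global maximizer of $f$, so $f'(s)=0$. Since $f$ is AC, this gives $f'(s)=0$, and then by the chain rule
\[
\nu(\eta(s))\cdot\dot\eta(s)=D\psi(\eta(s))\cdot\dot\eta(s)=0
\]
for a.e.\ $s$ with $\eta(s)\in\gG$.

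Taking the inner product of the Skorokhod relation $\dot\eta(s)+l(s)\gg(\eta(s))=v(s)$ with $\nu(\eta(s))$ at such $s$ yields $l(s)\,\gg(\eta(s))\cdot\nu(\eta(s))=v(s)\cdot\nu(\eta(s))$, and hence $l(s)\leq |v(s)|/\mu$. For a.e.\ $s$ with $\eta(s)\in\gO$, (\ref{eq:s1}) forces $l(s)=0$ directly. Combining, $l(s)\leq |v(s)|/\mu$ almost everywhere, and then the ODE together with the triangle inequality gives $|\dot\eta(s)|\leq(1+\|\gg\|_\infty/\mu)|v(s)|$. Taking $C$ to be a constant dominating both $1/\mu$ and $1+\|\gg\|_\infty/\mu$, which depends only on $\gO$ and $\gg$, concludes the proof.

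The only mildly delicate step is the measure-theoretic assertion that $\dot\eta(s)$ is tangent to $\gG$ at a.e.\ $s$ with $\eta(s)\in\gG$; this is immediate from the elementary fact that an absolutely continuous function bounded above by its supremum value must have zero derivative at every differentiability point at which that supremum is attained.
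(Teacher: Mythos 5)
Your proof is correct and follows essentially the same route as the paper's: choose a $C^1$ defining function for $\gO$, observe that $\psi\circ\eta$ is nonpositive and absolutely continuous so its derivative vanishes a.e.\ on $\{\eta\in\gG\}$, deduce tangency of $\dot\eta$ there, take the inner product of the Skorokhod relation with $\nu(\eta(s))$ and use (A3) to bound $l$, then bound $\dot\eta$ by the triangle inequality. The only cosmetic difference is that you normalize $D\psi=\nu$ on $\gG$, whereas the paper only requires $D\psi\neq 0$ on $\gG$ and invokes the parallelism of $D\psi$ and $\nu$; this changes nothing of substance.
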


An immediate consequence of the above proposition is that for $(\eta,\,v,\,l)\in\SP$, 
if $v\in L^p(\R_+,\,\R^n)$ (resp., $v\in L^p_{\rm loc}(\R_+,\,\R^n)$), 
with $1\leq p\leq \infty$, then  $(\eta,\,l)\in L^p(\R_+,\,R^{n+1})$ 
(resp., $(\eta,\,l)\in L^p_{\rm loc}(\R_+,\,\R^{n+1})$).

\begin{proof}
Thanks to hypothesis (A3), 
there is a constant $\gd_0>0$ such that $\nu(x)\cdot\gg(x)\geq \gd_0$ 
for $x\in\gG$. 
Let $(\eta,v,l)\in\SP$. According to the $C^1$ regularity of $\gO$, there 
is a function $\psi\in C^1(\R^n)$ such that  
\[
\gO=\{x\in\R^n\mid \psi(x)<0\} \ \ \ \hb{ and } \ \ \ D\psi(x)\not=0 \ \ \hb{ for }x\in\gG. 
\]
Noting that $\psi(\eta(s))\leq 0$ for all $s\geq 0$, 
we find that for any $s>0$, if $\eta(s)\in\gG$ and $\eta$ is differentiable at $s$, then 
\[
0=\fr{\d}{\d s}\psi(\eta(s))=D\psi(\eta(s))\cdot\dot\eta(s). 
\]
Hence, noting that $D\psi(\eta(s))$ is parallel to $\nu(\eta(s))$, we see that  
$\nu(\eta(s))\cdot \dot\eta(s)=0$. 

Let $s>0$ be such that $\eta(s)\in\gG$, $\dot\eta(s)$ exists,    
$\dot\eta(s)+l(s)\gg(\eta(s))=v(s)$, $l(s)\geq 0$ 
and $\nu(\eta(s))\cdot\dot\eta(s)=0$. We see immediately that
$l(s)\gg(\eta(s))\cdot\nu(\eta(s))=v(s)\cdot\nu(\eta(s))$. Hence, we get
\[
\gd_0\, l(s)\leq v(s)\cdot \nu(\eta(s))\leq |v(s)|
\]
and $l(s)\leq \gd_0^{-1}|v(s)|$  for a.e. $s\geq 0$. We also have
\[
\left|\dot\eta(s)\right|\leq |v(s)|+\|\gg\|_{\infty}|l(s)|\leq \Big(1+\fr{\|\gg\|_{\infty}}{\gd_0}\Big)
|v(s)| \ \ \hb{ for a.e. }s\geq 0. \qedhere
\]
\end{proof}

Let $\cF$ be a subset of $L^1(I,\R^m)$, where $I\subset\R$ is an interval. 
We recall that $\cF$ is said to be {\em uniformly integrable}   
if for any $\gep>0$ there is a $\gd>0$ such that for any $f\in\cF$, 
\[
\Big|\int_B f(s)\d s\Big|<\gep
 \ \  
\hb{ whenever } \ B\subset I\hb{ is measurable}  \ \hb{ and }  \  |B|<\gd. 
\] 
Here $|B|$ denotes the Lebesgue measure of $B\subset\R$. 

\begin{prop}\label{thm:s3} Let $\{(\eta_k,\,v_k,\,l_k)\}_{k\in\N}\subset\SP$. 
Assume that $\{|v_k|\}$ is uniformly integrable on every intervals  
$[0,\,T]$, with $0<T<\infty$. Then there exist a subsequence $\{\eta_{k_j},\,v_{k_j},\,l_{k_j}\}_{j\in\N}$ of 
$\{\eta_k,\,v_k,\,l_k\}$ and a $(\eta,\,v,\,l)\in\SP$ such that 
\begin{align*}
& \eta_{k_j}(t) \to \eta(t) \ \ \hb{ uniformly on }[0,\,T], \\
& \dot\eta_{k_j}\d t \to \dot\eta\d t \ \ \hb{ weakly-star in } C([0,\,T],\,\R^n)^*,\\
& v_{k_j}\d t \to v\d t \ \ \hb{ weakly-star in } C([0,\,T],\,\R^n)^*,\\
& l_{k_j}\d t \to l\d t \ \ \hb{ weakly-star in } C([0,\,T])^*
\end{align*} 
for every $T>0$.  
\end{prop}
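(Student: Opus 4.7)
The plan is a standard compactness argument: use Proposition \ref{thm:s2} to transfer uniform integrability from $\{|v_k|\}$ to $\{|\dot\eta_k|\}$ and $\{l_k\}$, then extract convergent subsequences via Arzel\`a--Ascoli and the Dunford--Pettis theorem with a diagonal in $T$, and finally pass to the limit in each of the three defining conditions of $\SP$. I will execute the steps in that order.

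First, Proposition \ref{thm:s2} gives $|\dot\eta_k(s)|\vee l_k(s)\leq C|v_k(s)|$ a.e., so $\{|\dot\eta_k|\}$ and $\{l_k\}$ are also uniformly integrable on every $[0,T]$ (uniform integrability is preserved under pointwise majoration). On a bounded interval, the notion of uniform integrability used in the paper in fact implies a uniform $L^1$-bound, by partitioning $[0,T]$ into finitely many pieces of length less than the relevant $\gd$. Hence $\{\eta_k\}$ is equi-absolutely continuous on $[0,T]$, and after passing to a subsequence along which $\eta_k(0)$ converges in $\ol\gO$, it is also uniformly bounded on $[0,T]$. The Arzel\`a--Ascoli theorem together with a diagonal argument over $T=N\in\N$ then produces a subsequence $\eta_{k_j}$ converging locally uniformly on $\ol\R_+$ to some $\eta\in C(\ol\R_+,\R^n)$ with $\eta(t)\in\ol\gO$ (as $\ol\gO$ is closed). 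The Dunford--Pettis theorem combined with the uniform $L^1$ bounds above yields weak $L^1$-relative compactness of $\{\dot\eta_k\}$, $\{v_k\}$, $\{l_k\}$ on each $[0,N]$; a further diagonal extraction secures weak $L^1$-convergence on every $[0,N]$ to some $w$, $v$, $l$, and this is stronger than the weak-star convergence in $C([0,T],\R^n)^*$ stated in the proposition.

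Next, I will identify $w=\dot\eta$: combining uniform convergence of $\eta_{k_j}$ with weak $L^1$-convergence of $\dot\eta_{k_j}$ against $\1_{[0,t]}$ in the identity $\eta_{k_j}(t)=\eta_{k_j}(0)+\int_0^t\dot\eta_{k_j}(s)\d s$ forces $\eta(t)-\eta(0)=\int_0^t w(s)\d s$, whence $\eta\in\AC_{\rm loc}(\ol\R_+,\R^n)$ and $\dot\eta=w$ a.e. To pass to the limit in the Skorokhod equation $\dot\eta_{k_j}+l_{k_j}\gg(\eta_{k_j})=v_{k_j}$, I will test against an arbitrary $\phi\in C_c(\R_+)$: the linear terms in $\dot\eta_{k_j}$ and $v_{k_j}$ pass directly by the weak convergences, and the nonlinear term is handled by a weak-strong pairing, since $\phi\,\gg(\eta_{k_j})\to\phi\,\gg(\eta)$ uniformly (a continuous function evaluated along a uniformly convergent trajectory) while $l_{k_j}$ converges weakly in $L^1$; hence $\int\phi\,l_{k_j}\,\gg(\eta_{k_j})\d t\to\int\phi\,l\,\gg(\eta)\d t$, giving $\dot\eta+l\,\gg(\eta)=v$ a.e.

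Finally, $l\geq 0$ a.e.\ as a weak $L^1$-limit of nonneg functions. For the complementarity condition $l=0$ a.e.\ on $A:=\{s\geq 0:\eta(s)\in\gO\}$, I will decompose the open set $A$ into a countable union of open intervals, and for any closed subinterval $[\alpha,\beta]$ contained in one of them observe that $\eta([\alpha,\beta])$ is a compact subset of $\gO$, so uniform convergence yields $\eta_{k_j}([\alpha,\beta])\subset\gO$ for $j$ large; then $l_{k_j}\equiv 0$ a.e.\ on $[\alpha,\beta]$ for such $j$, and weak $L^1$-convergence gives $\int_\alpha^\beta l\d t=0$, from which $l=0$ a.e.\ on $A$ follows by the arbitrariness of $\alpha,\beta$. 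The main obstacle throughout is the passage to the limit in the product $l_{k_j}\gg(\eta_{k_j})$, which relies on the weak-strong pairing above; everything else is routine compactness and limiting.
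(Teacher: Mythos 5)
Your proof is correct, and it takes a somewhat different route from the paper's. The paper establishes weak-star convergence of the sequences of measures $\dot\eta_{k_j}\,\d s$, $v_{k_j}\,\d s$, $l_{k_j}\,\d s$ in $C([0,T],\R^n)^*$ and $C([0,T])^*$ (Banach--Alaoglu, using exactly the uniform $L^1$-bound you extract from uniform integrability), and then identifies the three limit measures as absolutely continuous with densities $\dot\eta$, $\dot V$, $\dot L$ by integrating twice by parts against $C^1$ test functions and exploiting the uniform convergence of the primitives $\eta_{k_j}$, $V_{k_j}$, $L_{k_j}$ on finite intervals; the limit in the Skorokhod relation is then taken by integrating over $[0,t]$ and again using these uniform convergences of primitives. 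You instead invoke Dunford--Pettis to get weak $L^1$-compactness of the three sequences of densities directly, which delivers the stated weak-star convergences as a by-product and, more to the point, lets you pair the limits against arbitrary $L^\infty$ test functions. That makes the weak--strong pairing for the nonlinear term $l_{k_j}\,\gg(\eta_{k_j})$ immediate (you may test against the merely bounded function $\1_{[0,t]}\gg(\eta)$, no density argument or primitives needed), and likewise renders the complementarity argument $\int_\alpha^\beta l\,\d s=0$ a one-liner. The paper buys a more self-contained proof, avoiding the Dunford--Pettis theorem, at the price of the explicit identification-of-measures step; your route is shorter once Dunford--Pettis is taken as known. Your handling of the complementarity condition (covering $A=\{s:\eta(s)\in\gO\}$ by countably many compact subintervals and using uniform convergence of $\eta_{k_j}$) is the same argument the paper borrows from the end of the proof of Theorem~\ref{thm:s1}.
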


In the above proposition, we denote by $X^*$ the dual space of the Banach space $X$. 
Regarding notation in the above proposition, we remark 
that the weak-star convergence in $C([0,\,T])^*$ or $C([0,\,T],\,\R^n)^*$ 
is usually stated as the weak convergence of measures.  

\begin{proof} By Proposition \ref{thm:s2}, there is a constant $C_0>0$ such that 
for $k\in\N$, 
\begin{equation}\label{eq:s8}
\left|\dot\eta_k(s)\right|\vee l_k(s)\leq C_0|v_k(s)| \ \ \hb{ for a.e. }s\geq 0. 
\end{equation}
It follows from this that the sequences $\{\left|\dot\eta_k\right|\}$ 
and $\{l_k\}$ are uniformly integrable on the intervals $[0,\,T]$, $0<T<\infty$.  
If we set 
\[
V_k(t)=\int_0^t v_k(s)\d s  \ \ \hb{ and } \ \ L_k(t)=\int_0^t l_k(s)\d s \ \ \hb{ for }t\geq 0, 
\]
then the sequences $\{\eta_k\}$, $\{V_k\}$ and $\{L_k\}$ are equi-continuous and uniformly bounded 
on the intervals $[0,\,T]$, $0<T<\infty$. We may therefore choose 
an increasing sequence $\{k_j\}\subset\N$ so that the sequences 
$\{\eta_{k_j}\}$, $\{V_{k_j}\}$ and $\{L_{k_j}\}$ converge, as $j\to\infty$, uniformly 
on every finite interval $[0,\,T]$,  $0<T<\infty$, to some functions 
$\eta\in C(\ol\R_+,\,\ol\gO)$, $V\in C(\ol\R_+,\,\R^n)$ and $L\in 
C(\ol\R_+)$. The uniform integrability of the sequences 
$\{|\dot\eta_k|\}$, $\{|v_k|\}$ and $\{l_k\}$ implies that 
the functions $\eta$, $V$ and $L$ are absolutely continuous on every finite 
interval $[0,\,T]$, $0<T<\infty$.  

Fix any $0<T<\infty$. 
The uniform integrability of the sequences $\{\left|\dot\eta_{k}\right|\}$, 
$\{\left|v_{k}\right|\}$ and $\{l_{k}\}$ guarantees that the sequences 
$\{\dot\eta_k \d s\}$, $\{v_k\d s\}$ and $\{l_k\d s\}$ 
of measures on $[0,\,T]$ are bounded. 
That is, we have 
\[
\sup_{k\in\N}\int_0^T(|\dot\eta_k(s)|+|v_k(s)|+l_k(s))\d s<\infty.
\]
Hence we may assume without loss of generality that as $j\to\infty$, 
\begin{align*}
& \dot\eta_{k_j}\d s \to  \mu_1 \ \ \hb{ weakly-star in } C([0,\,T],\,\R^n)^*,\\
& v_{k_j}\d t \to \mu_2 \ \ \hb{ weakly-star in } C([0,\,T],\,\R^n)^*,\\
& l_{k_j}\d t \to \mu_3 \ \ \hb{ weakly-star in } C([0,\,T])^*
\end{align*} 
for some regular Borel measures $\mu_1$, $\mu_2$ and $\mu_3$ of bounded variations
on $[0,\,T]$. Then, for any $\phi\in C^1([0,\,T],\,\R^n)$, using integration by parts twice, 
we get
\begin{align*}
\int_0^T \phi(s)\mu_1(\d s) 
&=\lim_{j\to\infty}\int_0^T\phi(s)\dot\eta_{k_j}(s)\d s\\
&=\lim_{j\to\infty}\Big(\Big[\phi\eta_{k_j}\Big]_0^T-\int_0^T\phi'(s)\eta_{k_j}(s)\d s\Big)\\
&=\Big[\phi\eta\Big]_0^T-\int_0^T\phi'(s)\eta(s)\d s
=\int_0^T\phi(s)\dot\eta(s)\d s.  
\end{align*}  
By the density of $C^1([0,\,T],\,\R^n)$ in $C([0,\,T],\,\R^n)$, we find that 
\[
\int_0^T \phi(s)\mu_1(\d s)=\int_0^T\phi(s)\dot\eta(s)\d s,
\]
which shows that $\mu_1=\dot\eta\d s$ on $[0,\,T]$. Similarly we see that 
$\mu_2=\dot V\d s$ and $\mu_2=\dot L\d s$. Thus, setting $v=\dot V$ and $l=\dot L$, 
we have as $j\to\infty$    
\begin{align*}
& \dot\eta_{k_j}\d s \to  \dot\eta \d s \ \ \hb{ weakly-star in } C([0,\,T],\,\R^n)^*,\\
& v_{k_j}\d t \to v\d s \ \ \hb{ weakly-star in } C([0,\,T],\,\R^n)^*,\\
& l_{k_j}\d t \to l\d s \ \ \hb{ weakly-star in } C([0,\,T])^*. 
\end{align*} 
Note here that the above weak-star convergence is valid for every $0<T<\infty$. 

Since 
\[
\dot\eta_k(s)+l_k(s)\gg(\eta_k(s))=v_k(s) \ \ \hb{ for a.e. }s\geq 0,
\]  
integrating this over $[0,\,t]$, $0<t<\infty$ and sending $k=k_j$ as $j\to\infty$, we get
\[
\eta(t)-\eta(0)+\int_0^t l(s)\gg(\eta(s))\d s=\int_0^t v(s)\d s \ \ \hb{ for }t>0,
\]
which ensures that $\dot\eta(s)+l(s)\gg(\eta(s))=v(s)$ for a.e. $s\geq 0$. 
It is obvious that $\eta(s)\in\ol\gO$ for $s\geq 0$. Finally, we argue as in the last part of the proof of Theorem \ref{thm:s1}, to find that for a.e. $s\in\R_+$, 
$l(s)=0$ if $\eta(s)\in\gO$.   
 The proof is complete.  
\end{proof}

\begin{proof}[Proof of Theorem {\em\ref{thm:s0}}] Fix any $x\in\ol\gO$ and  
$v\in L^1_{\rm loc}(\ol\R_+,\,\R^n)$. 
In view of the semi-group property 
of problem (\ref{eq:s1}), we may assume that $v(s)=0$ for $s\geq 1$, so that 
$v\in L^1(\ol\R_+,\,\R^n)$. We define the sequence $\{v_k\}_{k\in\N}\subset 
L^\infty(\ol\R_+,\,\R^n)$ by 
\[
v_k(s)
=
\left\{
\begin{aligned}
&v(s)\quad&&\hb{ if }|v(s)|\leq k,\\
&0&&\hb{ otherwise}. 
\end{aligned}
\right.
\] 
Since $|v_k(s)|\leq |v(s)|$ for $s\geq 0$, we see that the sequence 
$\{|v_k|\}$ is uniformly integrable on $\ol\R_+$. 
According to Theorem \ref{thm:s1}, there is a sequence 
$\{(\eta_k,\,l_k)\}\subset \Lip(\ol\R_+,\,\R^n)\tim L^\infty(\ol\R_+,\,\ol\R_+)$ 
such that $(\eta_k,\,v_k,\,l_k)\in\SP(x)$ for all $k\in\N$.  
Then applying Proposition \ref{thm:s3}, we deduce that there is a 
$(\eta,\,l)\in \AC_{\rm loc}(\ol\R_+,\,\R^n)\tim 
L^1_{\rm loc}(\ol\R_+,\,\ol\R_+)$ such that 
$(\eta,\,v,\,l)\in\SP(x)$. 
\end{proof}

\section{Cauchy problem with the Neumann type boundary condition} 

In this section we introduce the value function of an optimal control problem 
associated with the initial-boundary value problem (\ref{eq:i3})--(\ref{eq:i5}), and show that it is a (unique) solution of problem (\ref{eq:i3})--(\ref{eq:i5}).  
     
We define the function $L\in\LSC(\ol\gO\tim\R^n,\,\R\cup\{\infty\})$, called the 
Lagrangian of $H$, by 
\[
L(x,\xi)=\sup_{p\in\R^n}\big(\xi\cdot p-H(x,p)\big). 
\]      
The value function $w$ of the optimal control with the dynamics given by (\ref{eq:s1}), 
the running cost $(L,\,g)$ and the pay-off $u_0$ is given by 
\begin{equation}\label{eq:cn1}
\begin{split}
w(x,t)=\inf\Big\{&\int_0^t \big(L(\eta(s),-v(s))+g(\eta(s))l(s)\big)\d s \\ 
&+u_0(\eta(t)) 
\mid 
(\eta,v,l)\in\SP(x)\Big\} 
\quad \hb{ for }(x,t)\in\ol\gO\tim\R_+. 
\end{split}
\end{equation}
Under our hypotheses, the Lagrangian $L$ may take the value $\infty$ and, on the other hand, 
there is 
a constant $C_0>0$ such that $L(x,\xi)\geq -C_0$ for $(x,\,\xi)\in\ol\gO\tim\R^n$. 
Thus, it is reasonable to interpret 
\[
\int_0^t L(\eta(s),-v(s))\d s=\infty 
\]
if the function: $s\mapsto L(\eta(s),-v(s))$
is not integrable, 
which we adopt here.  

It is well-known that (and also easily seen) the value function $w$ satisfies 
the dynamic programming principle
\begin{align*}
w(x,s+t)=&\,\inf\Big\{\int_0^t \big(L(\eta(\tau),-v(\tau))+g(\eta(\tau))l(\tau)\big)\d \tau+w(\eta(t),\,s) \mid\\
&\, \qquad\qquad (\eta,v,l)\in\SP(\eta(s))\Big\}  
\ \ \hb{ for }x\in\ol\gO \ \hb{ and } \ t, s\in\R_+. 
\end{align*}

\begin{thm}\label{thm:cn1} The value function $w$ is continuous on $\ol\gO\tim\R_+$ 
and it is a solution of {\em(\ref{eq:i3})--(\ref{eq:i4})}, with $a:=0$. Moreover, $w$ 
satisfies {\em(\ref{eq:i5})} in the sense that 
\[
\lim_{t\to 0+}w(x,t)=u_0(x) \ \ \hb{ uniformly for }x\in\ol\gO. 
\]
\end{thm}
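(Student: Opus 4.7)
The plan is to verify three properties in sequence: local boundedness and continuity of $w$, the viscosity sub/supersolution property via the dynamic programming principle, and the uniform initial condition; all three rely on the Skorokhod theory of Section 4 and the superlinear growth of $L$ implied by (A1)--(A2). The superlinear lower bound $L(x,\xi) \geq A|\xi| - B$, together with $l(s) \leq C|v(s)|$ from Proposition \ref{thm:s2}, keeps the running cost bounded below uniformly in admissible triples, so $w > -\infty$. For the upper bound I use the trivial trajectory $\eta \equiv x$, $v = l = 0$, which lies in $\SP(x)$ for any $x \in \ol\gO$ and gives $w(x,t) \leq L(x,0)\,t + \|u_0\|_\infty$ with $L(x,0) = -\min_p H(x,p)$ continuous on $\ol\gO$. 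Continuity of $w$ in $t$ is then immediate from the displayed DPP; continuity in $x$ follows by constructing, for any two close-by points in $\ol\gO$, a short connector triple in $\SP$ of cost $O(|x-y|)$, using the $C^1$ regularity of $\gG$.

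Granted DPP (which follows from the semigroup property of $\SP$ by the usual cut-and-paste), I verify the viscosity sub- and supersolution properties in the standard way. For the subsolution at an interior point $x_0 \in \gO$, plug the constant control $v \equiv -p$ (for arbitrary $p \in \R^n$) into DPP; the trajectory stays in $\gO$ for small $h$, so $l \equiv 0$, and dividing by $h$ and letting $h \to 0+$ yields $\phi_t(x_0,t_0) + p \cdot D\phi(x_0,t_0) \leq L(x_0,p)$. Taking $\sup_p$ gives the interior inequality $\phi_t + H(x_0,D\phi) \leq 0$. For $x_0 \in \gG$, one supposes the alternative $D_\gg\phi(x_0) \leq g(x_0)$ fails and uses a control keeping the trajectory strictly on the $\gO$-side; the reflection cost that would otherwise enter is dominated by the gap $D_\gg\phi - g > 0$, so the interior inequality is recovered. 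The supersolution property is obtained symmetrically from $\gep$-optimal triples, using the Fenchel inequality $-v \cdot D\phi \leq L(x,-v) + H(x,D\phi)$; in the boundary case any slack $g - D_\gg\phi > 0$ generates margin enough to absorb the $\int g\,l\,\d s$ term.

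For the initial condition, the upper bound above already gives $\limsup_{t\to 0+}w(x,t) \leq u_0(x)$ uniformly on $\ol\gO$. For the matching lower bound, fix $\gep > 0$ and let $\gd$ come from the uniform continuity of $u_0$; Proposition \ref{thm:s2} gives $|\eta(t)-x| \leq C\int_0^t |v|\,\d s$, so either this quantity is at most $\gd$ (in which case $u_0(\eta(t)) \geq u_0(x) - \gep$ and the remaining cost vanishes as $t \to 0+$ thanks to the uniform bound $L \geq -C_0$) or it exceeds $\gd$ (in which case the superlinear growth of $L$ forces the running cost to dominate the bounded pay-off). Either way, $w(x,t) \geq u_0(x) - O(\gep)$ for small $t$. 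I expect the hardest part to be the boundary case of the sub/supersolution verification: the reflection $\int g \cdot l\,\d s$ couples the Hamiltonian inequality with the Neumann inequality, and the test trajectories must be tuned so that its sign relative to $D_\gg\phi$ is controlled uniformly as the trajectory grazes $\gG$.
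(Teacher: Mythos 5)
Your outline is a genuine alternative route in two places, but both places contain a gap that the paper's argument is specifically engineered to avoid.

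\textbf{Subsolution via constant control versus Lemma \ref{thm:cn4}.}
You propose to plug the constant control $v\equiv -p$ into the dynamic programming principle and take the sup over $p$; the paper instead argues by contradiction from a strict-maximum point and invokes Lemma \ref{thm:cn4}, which produces a trajectory $(\eta,v,l)\in\SP$ along which the Fenchel inequality is \emph{nearly an equality at every time}, i.e.
$H(\eta(s),D\phi(\eta(s),t-s))+L(\eta(s),-v(s))\leq \gep - v(s)\cdot D\phi(\eta(s),t-s)$ a.e.
That lemma is not a detour: under (A1)--(A2) the Lagrangian $L$ is only lower semicontinuous and can take the value $+\infty$ (e.g.\ $H(x,p)=|p|$ gives $L(x,\xi)=\infty$ for $|\xi|>1$), and the effective domain can vary with $x$. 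With $v\equiv -p$ fixed you would need $\int_0^h L(\eta(s),p)\,\d s<\infty$, which is not guaranteed by $L(x_0,p)<\infty$ alone. The paper circumvents this by working with the truncations $L_i$, piecewise-constant controls constructed step by step via a measurable selection (Kuratowski--Ryll-Nardzewski), and a Mazur-lemma passage to a genuine admissible triple. Your sketch does not acknowledge this obstruction, and without some version of Lemma \ref{thm:cn4} (or an $L_i$-truncation argument) the constant-control step is not justified in general. The contradiction set-up around a \emph{strict} max point also matters: it is how one compares $w^*$ at nearby points with $w$ (the DPP holds for $w$, not for $w^*$), a subtlety that your direct ``divide by $h$'' step passes over.

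\textbf{Continuity.}
You put continuity first, deriving it from DPP plus short connectors. The connector argument gives $w(x,s+\tau)\leq C|x-y|+w(y,s)$ with $\tau\sim|x-y|$, which is a relation across different times; to turn that into $|w(x,s)-w(y,s)|\lesssim |x-y|$ you need two-sided $t$-regularity, and the trivial trajectory only gives the one-sided bound $w(x,s+h)\leq w(x,s)+Ch$. The missing reverse bound is exactly what is hard, and in the paper it is never established directly: continuity is obtained \emph{last}, by showing that $w^*$ is a subsolution and $w_*$ a supersolution, matching them at $t=0$ via Lemmas \ref{thm:cn2+1} and \ref{thm:cn3}, and applying the comparison theorem (Theorem \ref{thm:c4}) to conclude $w^*\leq w_*$. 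Your order of operations introduces a circularity that the paper's order avoids.

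\textbf{Where you agree with the paper.}
Your handling of the initial condition (upper bound from the trivial trajectory; lower bound via uniform continuity of $u_0$, the bound $l\leq C_0|v|$ from Proposition \ref{thm:s2}, and the superlinear lower bound on $L$) is a legitimate alternative to the paper's Lemma \ref{thm:cn2+1}, which instead builds a smooth classical subsolution satisfying the Neumann condition. Both work; yours is more control-theoretic, the paper's more PDE-theoretic. Your identification of the boundary case and the sign of $\int l\,(g-D_\gg\phi)\,\d s$ as the crux is correct and matches the paper's computations. The supersolution step using $\gep$-optimal triples and the trivial direction of Fenchel is essentially the paper's argument.

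In short: the architecture is plausible, but you need a substitute for Lemma \ref{thm:cn4} (you cannot simply take a constant control because of the possibly infinite, non-continuous $L$), and you should either establish the reverse $t$-Lipschitz bound separately or, as the paper does, derive continuity afterward from comparison.
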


The above theorem clearly ensures the existence of a solution of (\ref{eq:i3})--(\ref{eq:i5}), 
with $a:=0$. 
This together with Theorem \ref{thm:c4}, with $U:=\R^n$, establishes 
the unique existence of a solution of (\ref{eq:i3})--(\ref{eq:i5}), with $a:=0$.  
For the solvability of stationary and evolution problem for 
HJ-Jacobi equations, we refer to \cite{Lions-Neumann_85, LionsTrudinger_86,  BarlesLions_91, DupuisIshii_90, Barles_93, UG}. 

Another aspect of the theorem above is that it gives a variational formula for  
the unique solution of    
(\ref{eq:i3})--(\ref{eq:i5}), with $a:=0$. 
This is a classical observation on the value functions in optimal control, and, in this regard,  
we refer for instance to \cite{Lions-Neumann_85, LionsTrudinger_86}.  

The variational formula (\ref{eq:cn1}) is sometimes 
called the Lax-Oleinik formula. The formula (\ref{eq:cn1}) still  
valid for the solution of (\ref{eq:i3})--(\ref{eq:i5}) with general $a\in\R$ if 
one replaces the Lagrangian $L(x,\xi)$ by $L(x,\xi)+a$.

For the proof of Theorem \ref{thm:cn1}, we need the following three lemmas. 
In what follows we {\em always} assume that $a=0$ in (\ref{eq:i3}). 
We set $Q=\ol\gO\tim\R_+$.

\begin{lem}\label{thm:cn2}
Let $\psi\in C^1(\ol Q)$ be a classical subsolution of {\em 
(\ref{eq:i3})--(\ref{eq:i4})}.   
Assume that $\psi(x,0)\leq u_0(x)$ for $x\in\ol\gO$.   
Then 
$w\geq \psi$ on $\ol Q$. 
\end{lem}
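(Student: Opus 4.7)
The plan is to use the classical verification approach from optimal control: pair the subsolution $\psi$ against an arbitrary admissible trajectory $(\eta,v,l)\in\SP(x)$, estimate the resulting cost from below using the Fenchel--Young inequality $\xi\cdot p\leq L(x,\xi)+H(x,p)$, and then take the infimum. Concretely, I would fix $(x,t)\in Q$ and $(\eta,v,l)\in\SP(x)$, and look at the function $\phi(s):=\psi(\eta(s),\,t-s)$ on $[0,\,t]$.

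Since $\psi\in C^1(\ol Q)$ and $\eta\in\AC_{\rm loc}$, the function $\phi$ is absolutely continuous and the chain rule applies a.e.:
\[
\phi'(s)=D\psi(\eta(s),t-s)\cdot\dot\eta(s)-\psi_t(\eta(s),t-s).
\]
Using the Skorokhod equation $\dot\eta=v-l\,\gg(\eta)$ and then applying Fenchel--Young with $\xi=-v(s)$, $p=D\psi(\eta(s),t-s)$, I would obtain
\[
\phi'(s)\geq -L(\eta(s),-v(s))-H(\eta(s),D\psi)-l(s)\,D_\gg\psi(\eta(s),t-s)-\psi_t(\eta(s),t-s).
\]
The classical subsolution inequality $\psi_t+H(\cdot,D\psi)\leq 0$, which holds pointwise on $\ol\gO\tim(0,T)$ by continuity, kills the $H+\psi_t$ terms. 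For the remaining boundary contribution I would use that $l(s)\geq 0$ and that $l(s)>0$ forces $\eta(s)\in\gG$, where $D_\gg\psi\leq g$ holds, so $-l(s)D_\gg\psi(\eta(s),t-s)\geq -g(\eta(s))l(s)$ a.e. Combining these gives
\[
\phi'(s)\geq -L(\eta(s),-v(s))-g(\eta(s))l(s) \quad\hb{a.e. }s\in(0,t).
\]

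Integrating from $0$ to $t$ and substituting $\phi(t)=\psi(\eta(t),0)\leq u_0(\eta(t))$ and $\phi(0)=\psi(x,t)$ yields
\[
\psi(x,t)\leq u_0(\eta(t))+\int_0^t\bigl(L(\eta(s),-v(s))+g(\eta(s))l(s)\bigr)\d s,
\]
and taking the infimum over $(\eta,v,l)\in\SP(x)$ gives $\psi(x,t)\leq w(x,t)$.

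The only real delicacy I anticipate is integrability: since $L$ can take the value $+\infty$, one must either confine attention to trajectories along which $s\mapsto L(\eta(s),-v(s))$ is integrable (otherwise the running-cost integral is $+\infty$ by the convention adopted in the paper, and the inequality is trivial) or invoke the lower bound $L\geq -C_0$ to justify the pointwise estimate before integration. Proposition \ref{thm:s2} is also in the background, guaranteeing that $l\in L^1_{\rm loc}$ whenever $v$ is, so that the boundary term $\int g(\eta)l\,\d s$ makes sense. Once these integrability points are noted, the chain-rule/Fenchel--Young computation above is routine.
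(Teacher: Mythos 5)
Your proposal is correct and follows essentially the same argument as the paper's: compute $\frac{\d}{\d s}\psi(\eta(s),t-s)$, substitute the Skorokhod dynamics $\dot\eta=v-l\gg(\eta)$, apply the Fenchel--Young inequality to control $D\psi\cdot v$ by $L(\eta,-v)+H(\eta,D\psi)$, and use the classical subsolution inequalities (the interior one to absorb $\psi_t+H$, the boundary one via $l\geq 0$ and $l>0\Rightarrow\eta\in\gG$). Your remarks on integrability (the convention when $L$ is not integrable, and Proposition \ref{thm:s2} ensuring $l\in L^1_{\rm loc}$) are sound and make explicit points the paper leaves implicit.
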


\begin{proof}
Let $(x,t)\in Q$ and $(\eta,\,v,\,l)\in\SP(x)$. 
We have 
\begin{align*}
&\psi(\eta(t),0)-\psi(\eta(0),t)=
\int_0^t\fr{\d}{\d s}\psi(\eta(s),t-s)\d s \\
&=\int_0^t \big(D\psi(\eta(s),t-s)\cdot \dot\eta(s)-\psi_t(\eta(s),t-s)\big)\d s\\
&=\int_0^t \big(D\psi(\eta(s),t-s)\cdot (v(s)-l(s)\gg(\eta(s)))
-\psi_t(\eta(s),t-s)\big)\d s.
\end{align*}
Now, using the subsolution property of $\psi$ 
and the inequality $\psi(\cdot,0)\leq u_0$, we get 
\begin{align*}
&\psi(x,t)-u_0(\eta(t))\\
&\leq\int_0^t\big(-D\psi(\eta(s),t-s)\cdot v(s)
+l(s)D\psi(\eta(s))\cdot\gg(\eta(s)) 
+\psi_t(\eta(s),t-s)\big)\d s\\
&\leq\int_0^t\big(H(\eta(s),D\psi(\eta(s),t-s))+L(\eta(s),-v(s))
+l(s)D\psi(\eta(s))\cdot\gg(\eta(s))\cr
&\quad +\psi_t(\eta(s),t-s)\big)\d s\\
&\leq\int_0^t\big(
L(\eta(s),-v(s))+l(s)g(\eta(s))
\big)\d s. 
\end{align*}
Thus we conclude that $\psi(x,t)\leq w(x,t)$. 
\end{proof}

\begin{lem} \label{thm:cn2+1}
For any $\gep>0$ there is a constant $C_\gep>0$ such that 
$w(x,t)\geq u_0(x)-\gep-C_\gep t$ for $(x,t)\in Q$. 
\end{lem}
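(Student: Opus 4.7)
The plan is to apply Lemma \ref{thm:cn2} to a classical $C^1$ subsolution of the form $\psi(x,t)=\phi(x)-C_\gep t$, where $\phi\in C^1(\ol\gO)$ is a smooth approximation of $u_0$ from below that is compatible with the oblique boundary condition. If $\phi$ satisfies $u_0-\gep\leq\phi\leq u_0$ on $\ol\gO$, $D_\gg\phi\leq g$ on $\gG$, and $H(x,D\phi(x))\leq C_\gep$ on $\ol\gO$ for a constant $C_\gep>0$, then $\psi$ is a classical subsolution of (\ref{eq:i3})--(\ref{eq:i4}) with $a=0$ and $\psi(\cdot,0)=\phi\leq u_0$, so Lemma \ref{thm:cn2} gives $w(x,t)\geq\phi(x)-C_\gep t\geq u_0(x)-\gep-C_\gep t$.

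To build $\phi$, I would first mollify $u_0$ (after a continuous extension to a neighborhood of $\ol\gO$ via Tietze and uniform continuity on the compact $\ol\gO$) to obtain $\phi_0\in C^1(\ol\gO)$ with $\|\phi_0-u_0\|_\infty\leq\gep/4$; set $K:=\|D\phi_0\|_\infty$. Second, the $C^1$ regularity of $\gO$ furnishes a defining function $\rho\in C^1(\ol\gO)$ with $\rho<0$ in $\gO$, $\rho=0$ on $\gG$, and $|D\rho|\geq c>0$ on $\gG$, so that $\nu=D\rho/|D\rho|$ there. Fix a cutoff $\zeta\in C^1(\R)$ with $\zeta(0)=1$ supported in $[0,1]$, and for a parameter $\tau>0$ to be chosen define the boundary-layer function
\[
\chi(x):=-\rho(x)\,\zeta(-\rho(x)/\tau).
\]
A direct chain-rule computation shows $\chi\geq 0$, $\chi=0$ on $\gG$, $\|\chi\|_\infty\leq\tau\|\zeta\|_\infty$, and $\|D\chi\|_\infty\leq B$ uniformly in $\tau$ (the $\tau^{-1}$ from differentiating $\zeta$ is absorbed by the factor $\rho$, since $|-\rho/\tau|\leq 1$ on $\mathrm{supp}\,\chi$); moreover $D\chi=-D\rho$ on $\gG$, whence by (A3),
\[
D_\gg\chi=-\gg\cdot D\rho\leq -c\gd_0\quad\text{on }\gG,\qquad \gd_0:=\inf_\gG\gg\cdot\nu>0.
\]
Finally set $\phi:=\phi_0-\gep/2+M\chi$ with $M>0$ and $\tau>0$ chosen below.

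Pick $M$ large enough that $Mc\gd_0\geq K\|\gg\|_\infty+\|g\|_\infty$; then on $\gG$, $D_\gg\phi\leq K\|\gg\|_\infty-Mc\gd_0\leq-\|g\|_\infty\leq g$. With $M$ fixed, shrink $\tau$ so that $M\|\chi\|_\infty\leq\gep/4$; combined with $|\phi_0-u_0|\leq\gep/4$ this yields $u_0-\gep\leq\phi\leq u_0$ on $\ol\gO$. Since $\|D\phi\|_\infty\leq K+MB=:R_\gep$, the continuity of $H$ together with compactness of $\ol\gO$ yields $C_\gep:=\sup\{H(x,p):x\in\ol\gO,\,|p|\leq R_\gep\}<\infty$, so $H(\cdot,D\phi)\leq C_\gep$. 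This completes the verification of the three properties required of $\phi$, and Lemma \ref{thm:cn2} concludes.

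The one delicate point is the construction of $\chi$: we need a nonnegative $C^1$ perturbation whose sup-norm can be made arbitrarily small while preserving a uniformly negative oblique derivative on $\gG$. The cutoff construction above decouples these two scales, since $D\chi|_\gG=-D\rho$ is independent of the cutoff width $\tau$ whereas $\|\chi\|_\infty=O(\tau)$; this lets us first fix $M$ (to handle the boundary inequality $D_\gg\phi\leq g$), and then shrink $\tau$ (to obtain the uniform closeness $\phi\geq u_0-\gep$), in that order. Once $\chi$ is in hand, the remaining steps are elementary.
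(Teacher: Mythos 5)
Your proof is correct and follows the same strategy as the paper: approximate $u_0$ by a $C^1$ function, add a boundary-layer correction that has arbitrarily small sup-norm but a fixed strictly negative oblique derivative on $\gG$, and then invoke Lemma \ref{thm:cn2} on the resulting classical subsolution $\phi(x)-C_\gep t$. Your cutoff $\chi=-\rho\,\zeta(-\rho/\tau)$ plays the same role as the paper's $\gz_\gep\circ\psi^\gep$; both decouple the sup-norm scale (controlled by $\tau$, resp.\ by the truncation level) from the boundary-derivative scale (controlled by $M$, resp.\ by the multiplicative constant on $\psi_0$), so the two proofs are technically interchangeable. One small remark: your correction enforces $D_\gg\phi\leq g$ on $\gG$, which is indeed what the classical-subsolution check in Lemma \ref{thm:cn2} needs, whereas the paper's displayed condition $\gg\cdot D(u_0^\gep+\psi^\gep)\geq g$ has the sign reversed and should read $\leq g$ (with $\psi^\gep$ a \emph{negative} multiple of $\psi_0$); you silently avoided this slip.
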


\begin{proof}We fix any $\gep>0$ and choose a function $u_0^\gep\in C^1(\ol\gO)$ so that 
$|u_0(x)-u_0^\gep(x)|\leq\gep$ for $x\in\ol\gO$.   
We choose a function $\psi_0\in C^1(\R^n)$ so that 
$\gO=\{x\in\R^n\mid \psi_0(x)<0\}$ and $D\psi_0(x)\not=0$ for $x\in\gG$.  
By multiplying $\psi_0$ by a positive constant, we may find a function 
$\psi^\gep\in C^1(\ol\gO)$ so that 
\[
\gg(x)\cdot D(u_0^\gep+\psi^\gep)(x)\geq g(x) \ \ \hb{ for }x\in\gG.\]
Next, approximating the function: $r\mapsto 
(-\gep)\vee(\gep\wedge r)$ on $\R$ by a smooth function, 
we build a function $\gz_\gep\in C^1(\R)$ so that $|\gz_\gep(r)|\leq\gep$ 
for $r\in\R$ and $\gz_\gep'(0)=1$. Note that 
$D(\gz_\gep\circ \psi^\gep)(x)=D\psi^\gep(x)$ for $x\in\gG$ 
and $|u_0(x)-u_0^\gep(x)-\gz_\gep\circ \psi^\gep(x)|\leq 2\gep$ for $x\in\ol\gO$. 
We choose a constant $C_\gep>0$ so that 
\[
H(x,D(u_0^\gep+\gz_\gep\circ \psi^\gep)(x))\leq C_\gep \ \ \hb{ for }x\in\ol\gO.  
\] 
Finally we define the function $\phi^\gep\in C^1(\ol Q)$ by 
\[
\phi^\gep(x,t)=-2\gep+u_0^\gep(x)+\gz_\gep\circ \psi^\gep(x)-C_\gep t, 
\]  
and observe that $\phi^\gep$ is a classical subsolution of (\ref{eq:i3}), (\ref{eq:i4}) 
and that $\phi^\gep(x,0)\leq u_0(x)$ for $x\in\ol\gO$. By Lemma \ref{thm:cn2}, we get 
$\phi^\gep(x,t)\leq w(x,t)$ for $(x,t)\in Q$. Hence, 
we obtain $w(x,t)\geq u_0(x)-4\gep-C_\gep t$ for all $(x,t)\in Q$.  
\end{proof}

\advance \tolerance by 100

\begin{lem}\label{thm:cn3}
There is a constant $C>0$ such that 
$w(x,t)\leq u_0(x)+Ct$ for $(x,t)\in Q$.
\end{lem}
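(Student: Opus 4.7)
The plan is to test the variational formula (\ref{eq:cn1}) against the trivial admissible triple that stays put. Fix $x \in \ol\gO$ and take $\eta(s) \equiv x$, $v(s) \equiv 0$, $l(s) \equiv 0$ for $s \geq 0$; this triple lies in $\SP(x)$ since all conditions of (\ref{eq:s1}) are satisfied tautologically (the dynamical equation reduces to $0=0$, the non-negativity of $l$ is immediate, and $l \equiv 0$ is consistent with $\eta \in \gO$ as well as $\eta \in \gG$). Substituting into (\ref{eq:cn1}) immediately yields
\[
w(x,t) \;\leq\; \int_0^t \bigl(L(x,0) + g(x)\cdot 0\bigr) \d s + u_0(x) \;=\; L(x,0)\, t + u_0(x),
\]
so the lemma reduces to verifying that $C := \sup_{x\in\ol\gO} L(x,0) < \infty$.

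Since $L(x,0) = -\inf_{p\in\R^n} H(x,p)$, I would upgrade the pointwise coercivity (A2) to a uniform one on the compact set $\ol\gO$: namely, for every $M \in \R$ there exists $R > 0$ such that $H(x,p) \geq M$ for all $x \in \ol\gO$ and $|p| \geq R$. This is a standard consequence of (A1), (A2), continuity of $H$, and compactness of $\ol\gO$. Once this is in hand, $\inf_p H(x,p) = \min_{|p|\leq R} H(x,p)$ is uniformly bounded below by continuity of $H$ on the compact set $\ol\gO \tim \overline{B(0,R)}$, giving the desired finiteness of $C$.

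I do not expect a serious obstacle here. The only point requiring a little care is the uniform coercivity step, which I would establish by contradiction: if it failed, one could extract $x_k \to x_* \in \ol\gO$ and $|p_k| \to \infty$ with $H(x_k, p_k)$ bounded above; writing $R\, p_k/|p_k| = (R/|p_k|)\, p_k + (1-R/|p_k|)\cdot 0$ and applying (A1) together with the boundedness of $H(x_k, 0)$ would force $H(x_*, \cdot)$ to remain bounded above on the sphere of radius $R$ for every $R$, contradicting the pointwise coercivity of $H(x_*,\cdot)$.
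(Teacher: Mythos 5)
Your proof is correct and follows essentially the same route as the paper: substitute the constant triple $(\eta,v,l)\equiv(x,0,0)\in\SP(x)$ into the variational formula to get $w(x,t)\leq u_0(x)+tL(x,0)$, and then bound $L(x,0)=-\inf_{p}H(x,p)$ by a constant using coercivity and continuity of $H$ on the compact set $\ol\gO$. The paper simply sets $C=-\min_{\ol\gO\tim\R^n}H$ and implicitly takes finiteness for granted; you spell out the finiteness via a uniform-coercivity argument, which is a reasonable (if slightly more detailed) way of saying the same thing.
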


\advance \tolerance by -100

\begin{proof} Let $(x,t)\in Q$. Set $\eta(s)=x$, $v(s)=0$ and $l(s)=0$ for 
$s\geq 0$. Then $(\eta,v,l)\in\SP(x)$. Hence, we have 
\[
w(x,t)\leq u_0(x)+\int_0^tL(x,0)\d s=u_0(x)+tL(x,0)\leq u_0(x)-t\min_{p\in\R^n}H(x,p).
\]
Setting $C=-\min_{\ol\gO\tim\R^n}H$, we get $w(x,t)\leq u_0(x)+Ct$. 
\end{proof}

\begin{lem}\label{thm:cn4}
Let $t>0$, $x\in\ol\gO$, $\phi\in C^1(\ol\gO\tim[0,\,t])$ and $\gep>0$. 
Then there is a triple $(\eta,v,l)\in\SP(x)$ such that for a.e. $s\in(0,\,t)$, 
\[
H(\eta(s),D\phi(\eta(s),t-s))+L(\eta(s),-v(s))\leq\gep-v(s)\cdot D\phi(\eta(s),\,t-s). 
\]
\end{lem}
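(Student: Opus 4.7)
The plan is to build $(\eta, v, l) \in \SP(x)$ by a piecewise-constant Euler-type scheme on a fine partition of $[0, t]$ and concatenate the pieces via the semigroup property of $\SP$ noted after Theorem \ref{thm:s0}. Let $K$ denote a compact subset of $\ol\gO$ large enough to contain all trajectories under consideration, and set $M := \sup_{K \tim [0, t]} |D\phi|$. By (A1)--(A2) and compactness, $\bigcup_{y \in K,\, |p| \leq M} \partial H(y, p)$ is bounded by some constant $M_1$. Fix $N \in \N$ (to be chosen large) and set $t_k = kt/N$. Starting from $y_0 = x$, inductively define $p_k := D\phi(y_k, t - t_k)$, pick $\xi_k \in \partial H(y_k, p_k)$ (nonempty since $H(y_k, \cdot)$ is convex and continuous), and put $v_k := -\xi_k$. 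Fenchel equality then gives
\[
H(y_k, p_k) + L(y_k, -v_k) + v_k \cdot p_k = 0, \qquad |v_k| \leq M_1.
\]

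Since $v_k$ is bounded and constant, Theorem \ref{thm:s1} supplies a solution $\tilde\eta_k$ of the Skorokhod problem on $[0, t/N]$ with initial point $y_k$ and velocity $v_k$. Setting $y_{k+1} := \tilde\eta_k(t/N)$, gluing via the semigroup property, and extending by $v \equiv 0$ beyond time $t$ produces $(\eta, v, l) \in \SP(x)$ with $v$ piecewise constant (equal to $v_k$ on $[t_k, t_{k+1})$). By Proposition \ref{thm:s2}, $|\dot\eta| \leq CM_1$ a.e., so $|\eta(s) - y_k| \leq CM_1/N$ on $[t_k, t_{k+1}]$; thus for any fixed $N$, the trajectory stays in a fixed compact subset of $\ol\gO$, justifying the choice of $K$.

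For the desired inequality, writing $p(s) := D\phi(\eta(s), t - s)$, decompose on $[t_k, t_{k+1})$:
\begin{align*}
& H(\eta(s), p(s)) + L(\eta(s), -v_k) + v_k \cdot p(s) \\
& \qquad = \bigl[H(y_k, p_k) + L(y_k, -v_k) + v_k \cdot p_k\bigr] + E_H + E_\phi + E_L,
\end{align*}
where the bracket vanishes by construction, $E_H = H(\eta(s), p(s)) - H(y_k, p_k)$, $E_\phi = v_k \cdot (p(s) - p_k)$, and $E_L = L(\eta(s), -v_k) - L(y_k, -v_k)$. The terms $E_H$ and $E_\phi$ are controlled by uniform continuity of $H$ on $K \tim \ol{B(0, M)}$ and of $D\phi$ on $K \tim [0, t]$, using $|\eta(s) - y_k| + |s - t_k| = O(1/N)$.

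The main obstacle is $E_L$, since $L$ is only jointly lsc and may equal $+\infty$. I would handle this by first establishing, via a short convexity-compactness argument, that the coercivity of $H$ is uniform in $y \in K$: otherwise one extracts a sequence $(y_k, q_k)$ with $|q_k| \to \infty$ and $H(y_k, q_k)$ bounded, and by convexity the values $H(y_k, \lambda q_k/|q_k|)$ remain bounded for each fixed $\lambda > 0$; in the limit this yields a bounded restriction of $H(y_\infty, \cdot)$ along a ray, contradicting (A2). Uniform coercivity then forces the supremum defining $L(y, \xi) = \sup_p (\xi \cdot p - H(y, p))$ to be attained on a set of $p$'s uniformly bounded for $(y, \xi)$ near $(y_k, -v_k)$, on which $H$ is uniformly continuous; combined with $L(y_k, -v_k) = -v_k \cdot p_k - H(y_k, p_k) < \infty$ coming from $-v_k \in \partial H(y_k, p_k)$, this yields continuity of $L$ at $(y_k, -v_k)$ with a modulus depending only on $M, M_1, H, \phi$. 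Hence $|E_L| \to 0$ as $N \to \infty$ uniformly in $k$, and choosing $N$ large so that $|E_H| + |E_\phi| + |E_L| < \gep$ uniformly in $k$ and $s$ completes the argument.
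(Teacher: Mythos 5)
Your discretization scheme and use of the semigroup property match the opening moves of the paper's proof, but the strategy diverges at the crucial step: you try to pick $N$ large enough so that the inequality already holds for the discrete triple $(\eta_N,v_N,l_N)$, whereas the paper passes to a weak limit as $N\to\infty$ and only then establishes the inequality. The difference matters, because your error analysis requires controlling $E_L = L(\eta(s),-v_k)-L(y_k,-v_k)$, and the claimed ``continuity of $L$ at $(y_k,-v_k)$'' is false in general.

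The problem is that (A2) gives only coercivity, not superlinear growth, so the effective domain $\{\xi : L(y,\xi)<\infty\}$ may be a proper subset of $\R^n$ that varies with $y$, and $\xi_k\in\partial H(y_k,p_k)$ can land exactly on its boundary. Concretely, take $H(y,p)=f(y)|p|$ with $f>0$ continuous but nonconstant. Then $L(y,\xi)=0$ if $|\xi|\leq f(y)$ and $L(y,\xi)=+\infty$ otherwise. At $p_k\neq 0$ one gets $\xi_k = f(y_k)\,p_k/|p_k|$, so $|\xi_k|=f(y_k)$ and $L(y_k,\xi_k)=0$, yet $L(y,\xi_k)=+\infty$ the instant $\eta(s)$ moves to any $y$ with $f(y)<f(y_k)$. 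Thus $E_L$ can be $+\infty$ for arbitrarily small $1/N$, and no modulus of continuity exists. The intermediate claim that uniform coercivity forces the supremum in $L$ to be attained on a uniformly bounded set for $(y,\xi)$ near $(y_k,-v_k)$ is also false in this example: for $|\xi|$ slightly larger than $f(y)$ the supremum is $+\infty$ and is not attained at all.

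The paper's proof dodges exactly this obstruction: it never needs continuity of $L$ itself. It chooses $\xi_j$ via an approximate Legendre inequality (with slack $\gep$), passes to the weak limit $(\eta,v,l)$ using Proposition~\ref{thm:s3}, and then proves the integrand inequality by replacing $L$ with the truncations $L_i(x,\xi)=\max_{|p|\leq i}(\xi\cdot p - H(x,p))$, which are genuinely (uniformly) continuous on bounded sets. After using Mazur's lemma to get a.e.\ convergence of convex combinations $\hat v_k\to v$, it first obtains the inequality with $L_i$ and a factor $\gth>1$, and finally lets $i\to\infty$ and $\gth\to 1$ by monotone convergence. That limit-taking with truncation is the missing idea; your proof cannot succeed without some substitute for it.
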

 
We postpone the proof of the above lemma and give now 
the proof of Theorem \ref{thm:cn1}. 

\def\beeq{\begin{equation}}
\def\eneq{\end{equation}}
\def\x{\hat x}
\def\t{\hat t}

\begin{proof}[Proof of Theorem {\em\ref{thm:cn1}}] By Lemmas \ref{thm:cn2+1} and \ref{thm:cn3}, 
there is a constant $C>0$ and for each $\gep>0$ a constant $C_\gep>0$ such that 
\[
-\gep-C_\gep t\leq w(x,t)-u_0(x)\leq Ct \ \ \hb{ for all }(x,t)\in Q.
\]
This shows that $w$ is a real-valued function on $Q$ and that 
\begin{equation} \label{eq:cn2}
\lim_{t\to 0+}w(x,t)=u_0(x) \ \ \hb{ uniformly for }x\in\ol\gO. 
\end{equation}

We next prove that 
$w$ is a subsolution of (\ref{eq:i3}), (\ref{eq:i4}).   
Let $(\x,\t)\in Q$ and $\phi\in C^1(\ol Q)$. Assume that 
$w^*-\phi$ attains a strict maximum at $(\x,\t)$. We need to show that if $\x\in\gO$, then  
\[
\phi_t(\x,\t)+H(\x,D\phi(\x,\t))\leq 0, 
\]
and if $\x\in\gG$, then either 
\begin{equation}
\phi_t(\x,\t)+H(\x,D\phi(\x,\t))\leq 0 \ \ \hb{ or } \ \ \gg(\x)\cdot D\phi(\x,\t)\leq g(\x). 
\label{eq:cn3}
\end{equation}

We are here concerned only with the case where $\x\in\gG$. The other case can be treated 
similarly. To prove (\ref{eq:cn3}), 
we argue by 
contradiction. Thus we suppose that (\ref{eq:cn3}) were false. 
We may choose an $\gep\in(0,\,1)$ so that 
$\t-2\gep>0$ and for $(x,t)\in \big(\ol\gO\cap B(\x,\,2\gep)\big)\tim[\t-2\gep,\,\t+2\gep]$, 
\beeq
\phi_t(x,t)+H(x,D\phi(x,t))\geq 2\gep \ \ \hb{ and  } \ \ 
\gg(x)\cdot D\phi(x,t)-g(x)\geq2\gep,
\label{eq:cn4}
\eneq  
where $\gg$ and $g$ are assumed to be defined and continuous on $\ol\gO$. 
We may assume that $(w^*-\phi)(\x,\t)=0$. 
Set
\[
B=\big(\pl B(\x,2\gep)\tim [\t-2\gep,\t+2\gep] \cup B(\x,2\gep)\tim\{\t-2\gep\}\big)\, \cap\,\ol Q,
\]
and 
$m=-\max_{B}(w^*-\phi)$.  Note that $m>0$ 
and
$w(x,t)\leq \phi(x,t)-m$ for $(x,t)\in B$. 
We choose a point $(\bar x,\bar t)\in \ol\gO\cap B(\x,\gep)
\tim[\t-\gep,\,\t+\gep]$ so that $(w-\phi)(\bar x,\bar t)>-\gep^2\wedge m$.  
We apply Lemma \ref{thm:cn4}, to find a triple $(\eta,v,l)\in\SP(\bar x)$ such that 
for a.e. $s\geq 0$, 
\beeq
H(\eta(s),D\phi(\eta(s),\bar t-s))+L(\eta(s),-v(s))\leq \gep-v(s)\cdot D\phi(\eta(s),\bar t-s)
\label{eq:cn5}
\eneq
Note that $\gs:=\bar t-(\t-2\gep)\geq \gep$ and $\dist(\bar x, \pl B(\x,2\gep))\geq \gep$. 
Set 
\[
S=\{s\in[0,\,\gs]\mid \eta(s)\in \pl B(\x,2\gep)\} \ \ 
\hb{ and } \ \  \tau=\inf S.
\]
We consider first the case where $\tau=\infty$, i.e., the case $S=\emptyset$. 
By the dynamic programming 
principle, we have
\begin{align*}
\phi(\bar x,\bar t)<&\, w(\bar x,\bar t)+\gep^2 \cr
\leq&\,  \int_0^\gs \big(L(\eta(s),-v(s))+g(\eta(s))l(s)\big)\d s+w(\eta(\gs),\bar t-\gs)
+\gep^2\cr 
\leq&\,  \int_0^\gs \big(L(\eta(s),-v(s))+g(\eta(s))l(s)+\gep\big)\d s+\phi(\eta(\gs),\bar t-\gs).
\end{align*}
Hence, we obtain 
\begin{align*}
0<&\, \int_0^\gs\big(L(\eta(s),-v(s))+g(\eta(s))l(s)+\gep
+\fr{\d}{\d s}\phi(\eta(s),\bar t-s)
\big)\d s\cr 
\leq &\, \int_0^\gs\big(L(\eta(s),-v(s))+g(\eta(s))l(s)+\gep\cr 
&\, \quad +D\phi(\eta(s),\bar t-s)\cdot\dot\eta(s)-\phi_t(\eta(s),\bar t-s)
\big)\d s\cr 
\leq &\, \int_0^\gs\big(L(\eta(s),-v(s))+g(\eta(s))l(s)+\gep\cr 
&\, \quad +D\phi(\eta(s),\bar t-s)\cdot(v(s)-l(s)\gg(\eta(s))-\phi_t(\eta(s),\bar t-s)
\big)\d s.\cr 
\end{align*}
Now, using (\ref{eq:cn5}) and (\ref{eq:cn4}), we get 
\begin{align*}
0<&\, \int_0^\gs\big(2\gep-H(\eta(s),D\phi(\eta(s),\bar t-s))+g(\eta(s))l(s)\cr 
&\, \quad -l(s)D\phi(\eta(s),\bar t-s)\cdot \gg(\eta(s))-\phi_t(\eta(s),\bar t-s)
\big)\d s\cr 
\leq&\, \int_0^\gs l(s)\big(g(\eta(s))-\gg(\eta(s))\cdot D\phi(\eta(s),\bar t-s)
\big)\d s\leq 0, 
\end{align*}
which is a contradiction. 

Next we consider the case where $\tau<\infty$. Observe that $\tau>0$ and  
\begin{align*}
\phi(\bar x,\bar t)<&\, w(\bar x,\bar t)+m \cr
\leq&\, \int_0^\tau 
\big(L(\eta(s),-v(s))+g(\eta(s))l(s)\big)\d s+w(\eta(\tau),\bar t-\tau)+m\cr
\leq&\,  \int_0^\tau 
\big(L(\eta(s),-v(s))+g(\eta(s))l(s)\big)\d s+\phi(\eta(\tau),\bar t-\tau).
\end{align*}
Using (\ref{eq:cn5}) and (\ref{eq:cn4}) as before, we compute that
\begin{align*}
0<&\, \int_0^\tau 
\big(L(\eta(s),-v(s))+g(\eta(s))l(s) -\phi_t(\eta(s),\bar t-s)\cr
&\,\quad
+D\phi(\eta(s),\bar t-s)\cdot v(s)-l(s)\gg(\eta(s))\cdot D\phi(\eta(s),\bar t-s)
\big)\d s \cr
\leq &\,  \int_0^\tau \big\{\gep-H(\eta(s),D\phi(\eta(s),\bar t-s))
-\phi_t(\eta(s),\bar t-s)\cr
&\,\quad 
+l(s)[g(\eta(s))-\gg(\eta(s))\cdot D\phi(\eta(s),\bar t-s)]\big\}\d s
<0, 
\end{align*}
which is again a contradiction. Thus, we conclude that $w$ 
is a subsolution of (\ref{eq:i3}), (\ref{eq:i4}). 

Now, we turn to the proof of the supersolution property of $w$. 
Let $\phi\in C^1(\ol Q)$ and $(\x,\t)\in\ol\gO\tim(0,\,\infty)$. 
Assume that $w_*-\phi$ attains a strict minimum at $(\x,\t)$. 
We show that if $\x\in\gO$, then 
\[
\phi_t(\x,\t)+H(\x,D\phi(\x,\t))\geq 0,  
\]
and if $\x\in\gG$, then 
\beeq
\phi_t(\x,\t)+H(\x,D\phi(\x,\t))\geq 0 \ \ \hb{ or } \ \ 
\gg(\x)\cdot D\phi(\x,\t)\geq g(\x).\label{eq:cn6}
\eneq
We only consider the case where $\x\in\gG$, and leave it to the reader to 
check the details in the other case. 
To show (\ref{eq:cn6}), we suppose by contradiction that (\ref{eq:cn6}) were 
false. That is, we have 
\[
\phi_t(\x,\t)+H(\x,D\phi(\x,\t))<0 \ \ \hb{ and } \ \ 
\gg(\x)\cdot D\phi(\x,\t)-g(\x)<0.
\]
There is an $\gep>0$ such that  
\[
\phi_t(x,t)+H(x,D\phi(x,t))<0 \ \ \hb{ and } \ \ 
\gg(x)\cdot D\phi(x,t)-g(x)<0 \ \ \hb{ for }(x,t)\in R\cap\ol Q,
\] 
where $R:=B(\x,2\gep)\tim[\t-2\gep,\t+2\gep]$.
Here we may assume that $\t-2\gep>0$ and $(u_*-\phi)(\x,\t)=0$. 
Set 
\[
m:=\min_{\ol Q\cap \pl R}(u_*-\phi)\ (>0).
\]
We may choose a point $(\bar x,\bar t)\in\ol Q$ so that 
$(u_*-\phi)(\bar x,\bar t)<m$, $|\bar x-\x|<\gep$ and $|\bar t-\t|<\gep$.  
We select a triple $(\eta,v,l)\in\SP(\bar x)$ so that 
\[
u(\bar x,\bar t)+m>\int_0^{\bar t}\big(L(\eta(s),-v(s))+g(\eta(s))l(s)\big)\d s
+u_0(\eta(\bar t)). 
\]
We set 
\[
\tau=\min\{s\geq 0\mid (\eta(s),\bar t-s)\in\pl R\}. 
\]
It is clear that $\tau>0$, $\eta(s)\in R\cap\ol Q$ for 
$s\in[0,\,\tau]$ and,  
if $|\eta(\tau)-\x|<2\gep$, then $\tau=\bar t-(\t-2\gep)>\gep$. 
Accordingly, we have  
\begin{align*}
\phi(\bar x,\bar t)+m>&\, \int_0^\tau \big(
L(\eta(s),-v(s))+g(\eta(s))l(s)
\big)\d s+u(\eta(\tau),\bar t-\tau)\cr
\geq&\, \int_0^\tau \big(
L(\eta(s),-v(s))+g(\eta(s))l(s) 
\big)\d s+\phi(\eta(\tau),\bar t-\tau)+m.
\end{align*}
Hence, we get
\begin{align*}
0>&\,\int_0^\tau\big(
L(\eta(s),-v(s))+g(\eta(s)l(s) +D\phi(\eta(s),\bar t-s)\cdot\dot\eta(s)
-\phi_t(\eta(s),\bar t-s)\big)\d s\cr
\geq &\, \int_0^\tau\big(-v(s)\cdot D\phi(\eta(s),\bar t-s)
-H(\eta(s),D\phi(s,\bar t-s))-g(\eta(s))l(s)\cr
&\, +D\phi(\eta(s),\bar t-s)\cdot\dot\eta(s)
-\phi_t(\eta(s),\bar t-s)\big)\d s>0, 
\end{align*}
which is a contradiction.

It remains to show that $w$ is continuous on $Q$. According to (\ref{eq:cn2}), we have 
$w^*(\cdot,0)=w_*(\cdot,0)=u_0$ on $\ol\gO$. Thus,  
applying the comparison theorem (Theorem 
\ref{thm:c4} with $U:=\R^n$) , we see that $w^*\leq w_*$ on $\ol Q$, which guarantees that 
$w\in C(Q)$.    
This completes the proof. 
\end{proof}   

For the proof of Lemma \ref{thm:cn4}, we need the following basic lemma.

\begin{lem}\label{Lemma A}
Let $R>0$. There is a constant $C>0$, depending only on $R$ and $H$, 
such that for any $(x,p,v)\in\ol\gO\tim B(0,\,R)\tim\R^n$, if 
\[
H(x,p)+L(x,-v)\leq 1-v\cdot p,
\] 
we have $|v|\leq C$. 
\end{lem}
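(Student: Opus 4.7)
The plan is to exploit the duality between $H$ and $L$: coercivity of $H$ in $p$ translates (via the Legendre transform) into superlinear growth of $L$ in $\xi$, and this superlinearity will dominate the linear term $v \cdot p$ appearing on the right-hand side of the hypothesis.

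First, I would establish the superlinear lower bound on $L$. For any $M > 0$, using $q := M \xi/|\xi|$ as a test point in the definition of $L(x, \xi) = \sup_q(\xi\cdot q - H(x,q))$, I get
\[
L(x,\xi) \;\geq\; M|\xi| - C_M, \qquad\text{where } C_M := \sup\{|H(x,q)| : x\in\ol\gO,\; |q|\leq M\},
\]
which is finite by continuity of $H$ on the compact set $\ol\gO \times \ol{B(0,M)}$ (recall $\gO$ is bounded). Applying this with $\xi := -v$ gives $L(x, -v) \geq M|v| - C_M$.

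Next, I would bound $L(x,-v)$ from above using the hypothesis: since $|p| \leq R$,
\[
L(x, -v) \;\leq\; 1 - v\cdot p - H(x, p) \;\leq\; 1 + R|v| + C_R.
\]
Combining the two estimates with the choice $M := R + 1$ yields
\[
(R+1)|v| - C_{R+1} \;\leq\; 1 + R|v| + C_R,
\]
so $|v| \leq 1 + C_R + C_{R+1} =: C$, a constant depending only on $R$ and $H$ (through the compact-set sup-norms of $H$).

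There is no real obstacle here; the only point requiring care is to evaluate $H$ on the correct compact set when asserting the boundedness used to define $C_M$ — this uses both compactness of $\ol\gO$ and continuity of $H$, without invoking convexity or the boundary vector field.
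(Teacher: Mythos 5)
Your proof is correct and follows essentially the same route as the paper: both use the definition of $L$ as a supremum to obtain a lower bound $L(x,-v)\geq M|v|-C_M$ with a slope $M>R$ (you take $M=R+1$, the paper takes $M=2R$), bound $H$ on the compact set $\ol\gO\tim B(0,M)$ by continuity, and let the superlinear slope absorb the term $-v\cdot p$ with $|p|\leq R$. The only cosmetic remark is that the test point $M\xi/|\xi|$ is undefined at $\xi=0$, but the claimed bound holds trivially there (take $q=0$), so nothing is lost.
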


\begin{proof} We may choose a constant $C_1>0$ so that 
\[
C_1\geq \max_{\ol\gO\tim B(0,\,2R)}|H|. 
\]
Observe that 
\[
L(x,-v)\geq \max_{p\in B(0,\,2R)}(-v\cdot p)-C_1 =2R|v|-C_1 
\ \ \hb{ for }(x,\,v)\in\ol\gO\tim\R^n.
\]
Let $(x,p,v)\in\ol\gO\tim B(0,\,R)\tim\R^n$ satisfy  
\[
H(x,p)+L(x,-v)\leq 1-v\cdot p. 
\] 
Then we have 
\[
-C_1+2R|v|-C_1\leq 1+|v||p|\leq 1+R|v|.
\]
Consequently, we get
\[
R|v|\leq 2C_1+1.\qedhere
\]
\end{proof}

For $i\in\N$ we introduce the function $L_i\in C(\ol\gO\tim\R^n)$ by setting  
\[
L_i(x,\xi)=\max_{p\in B(0,i)}\big(\xi\cdot p-H(x,p)\big). 
\]
Observe that $L_i(x,\xi)\leq L(x,\xi)$ and 
$\lim_{i\to\infty}L_i(x,\xi)=L(x,\xi)$ for $(x,\xi)\in\ol\gO\tim\R^n$ and  
that every $L_i$ is uniformly continuous on bounded subsets of $\ol\gO\tim\R^n$.

\begin{proof}[Proof of Lemma {\em\ref{thm:cn4}}]  
Fix $k\in\N$. Set $\gd=t/k$ and $s_j=(j-1)\gd$ for $j=1,2,...,k$. 
We define inductively a sequence 
$\{(x_j,\eta_j,v_j,l_j)\}_{j=1}^k\subset \ol\gO\tim \SP$. 
We set $x_1=x$ and choose a $\xi_1\in\R^n$ so that  
\[
H(x_1,D\phi(x_1,t))+L(x_1,-\xi_1)\leq \gep-\xi_1\cdot D\phi(x_1,t).
\]
Set $v_1(s)=\xi_1$ for $s\geq 0$ and choose a pair $(\eta_1,l_1)\in \Lip(\ol\R_+,\,\ol\gO)\tim 
L^\infty(\ol\R_+,\,\ol\R_+)$ so that 
$(\eta_1,v_1,l_1)\in\SP(x_1)$. 
According to Theorem \ref{thm:s1}, such a pair always exists. 

Suppose now that we are given 
$(x_i,\eta_i,v_i,l_i)$ for all $i=1,2,...,j-1$ and for some $j\leq k$. 
Then set $x_j=\eta_{j-1}(\gd)$, 
choose a $\xi_j\in\R^n$ so that 
\beeq
H(x_j,D\phi(x_j,t-s_j))+L(x_j,-\xi_j)\leq \gep-\xi_j\cdot D\phi(x_j,t-s_j), 
\label{eq:cn7}
\eneq
set $v_j(s)=\xi_j$ for $s\geq 0$, and select a pair $(\eta_j,l_j)\in \Lip(\ol\R_+,\ol\gO)\tim 
L^\infty(\ol\R_+,\R^n)$ so that 
$(\eta_j,v_j,l_j)\in\SP(x_j)$. 
Thus, by induction, we have chosen a sequence 
$\{(x_j,\eta_j,v_j,l_j)\}_{j=1}^k\subset \ol\gO\tim \SP$ such that 
$x_1=\eta_1(0)$, $x_j=\eta_{j-1}(\gd)=\eta_j(0)$ for $j=2,...,k$ 
and for each $j=1,2,...,k$, 
(\ref{eq:cn7}) holds with $\xi_j=v_j(s)$ for all $s\geq 0$.  
Notice that the choice of  
$x_j,\,\eta_j,\,v_j,\,l_j$, with $j=1,...,k$, depends on $k$, 
which is not explicit 
in our notation.  

Next, we define a triple $(\bar\eta_k,\bar v_{k},\bar l_k)\in \SP(x)$ by 
setting
\[\big(\bar \eta_k(s),\,\bar v_k(s),\,\bar l_k(s)\big)
=\big(\eta_j(s-s_j),\, v_j(s-s_j),\,l_j(s-s_j)\big)
\] 
for $s_j\leq s<s_{j+1}$ and $j=1,2,...,k-1$ 
and  
\[\big(\bar\eta_k(s),\,\bar v_k(s),\,\bar l_k(s)\big)
=\big(\eta_k(s-s_k),\, v_k(s-s_k),\,l_k(s-s_k)\big) 
\] 
for $s\geq s_k$. 
We may assume that $\gep<1$ and, by Lemma \ref{Lemma A}, we find a constant $C_1>0$, 
independent of 
$k$, such that
$\max_{s\geq 0}|\bar v_k(s)|=\max_{1\leq j\leq k}|\xi_j|\leq C_1$. 
By Proposition 
\ref{thm:s2}, we find a constant $C_2>0$, independent of $k$, such that 
$\|\dot\eta_k\|_{L^\infty(\R_+)}\vee \|\bar l_k\|_{L^\infty(\R_+)}\leq C_2$. 
Now, we define the step function $\chi_k$ on $\ol\R_+$ by  
setting $\chi_k(s)=s_j$ for $s_j\leq s<s_{j+1}$  
and $j=1,2,...,k$ and $\chi_k(s)=s_k$ for $s\geq s_k$, and   
observe that (\ref{eq:cn7}), $1\leq j\leq k$, can be rewritten as
\begin{equation}\label{eq:cn8}
\begin{split} 
& H(\bar \eta_k(\chi_k(s)), D\phi(\bar\eta_k(\chi_k(s)),t-\chi_k(s)))
+L(\bar\eta_k(\chi_k(s)),-\bar v_k(s))\\
&\qquad\qquad \leq \gep-\bar v_k(s)\cdot D\phi(\bar\eta_k(\chi_k(s)), t-\chi_k(s)) \ \ \hb{ for }0\leq s\leq t. 
\end{split}
\end{equation}

We may invoke Proposition \ref{thm:s3}, to find a triple $(\eta,\,v,\,l)\in\SP(x)$ 
and a subsequence of $\{(\bar \eta_k,\bar v_k, \bar l_k)\}_{k\in\N}$, 
which will be denoted again by the same symbol, so that for every 
$0<T<\infty$, as $k\to \infty$,  
$\bar\eta_k \to \eta$ uniformly on $[0,\,T]$, 
$\bar v_k\d s \to v\d s$ weakly-star in $C([0,\,T],\,\R^n)^*$ and 
$\bar l_k\d s\to l\d s$  weakly-star in $C([0,\,T])^*$. We may moreover 
assume that $\bar v_k \to v$ weakly-star in $L^\infty(\R_+,\,\R^n)$ and 
$\bar l_k\to l$  weakly-star in $L^\infty(\R_+)$ as $k\to\infty$.    

Since $\bar v_k\to v$ weakly in $L^2(0,t)$, we may 
choose a sequence $\{\gl_k\}_{k\in\N}$ of finite sequences 
$\gl_k=(\gl_{k,1},\gl_{k,2},...,\gl_{k,N_k})$ of nonnegative 
numbers such that 
\[
\sum_{j=1}^{N_k}\gl_{k,j}=1 \ \ \hb{ and } \ \ 
\hat v_k:=\sum_{j=1}^{N_k}\gl_{k,j}v_j \ \hb{ converge to } \ v \ \hb{ in } L^2(0,\,t). 
\]
Here we may moreover assume by selecting a subsequence of $\{\bar\eta_k,\bar v_k,\bar l_k)\}$ 
that as $k\to\infty$, $\hat v_k(s) \to v(s)$ for a.e. $s\in (0,\,t)$.   

Fix any $i\in\N$ and $\gth>1$. In view of the uniform 
continuity of the functions $H$ and $L_i$ on bounded subsets of 
$\ol\gO\tim\R^n$ and the uniform convergence of $\{\bar \eta_k\}$ 
to $\eta$ on $[0,\,t]$, 
from (\ref{eq:cn8}), we get 
\begin{align*}
&H(\eta(s),D\phi(\eta(s),t-s))+L_i(\eta(s),-\bar v_k(s)) \\
&\leq \gth \gep -\bar v_k(s)\cdot D\phi(\eta(s),t-s) \ \ \hb{ for }s\in(0,\,t)
\end{align*}
for sufficiently large $k$, say, for $k\geq k_\gth$, 
and hence, by taking the convex combination,  
\begin{align*}
&H(\eta(s),D\phi(\eta(s),t-s))+L_i(\eta(s),-\hat v_k(s)) \\
&\leq \gth \gep -\hat v_k(s)\cdot D\phi(\eta(s),t-s) \ \ \hb{ for }s\in(0,\,t)
\end{align*}
for $k\geq k_\gth$. Sending $k\to\infty$, we get 
\[
H(\eta(s),D\phi(\eta(s),t-s))+L_i(\eta(s),-v(s))
\leq \gth\gep -v(s)\cdot D\phi(\eta(s),t-s) \ \ \hb{ for a.e. } s\in(0,\,t),
\]
and, because of the arbitrariness of $i$ and $\gth>1$, 
we obtain 
\[
H(\eta(s),D\phi(\eta(s),t-s))+L(\eta(s),-v(s))
\leq \gep -v(s)\cdot D\phi(\eta(s),t-s) \ \ \hb{ for a.e. } s\in(0,\,t).\qedhere
\]
\end{proof}

\section{Aubry-Mather sets and formulas for solutions of (\ref{eq:i1}), (\ref{eq:i2})}  

In this section we define the Aubry-Mather set associated with (\ref{eq:i1}), (\ref{eq:i2}). 
Our argument here is very close to that of \cite{FathiSiconolfi_04,  
FathiSiconolfi_05}. 

By the $C^1$ regularity of $\gO$ and assumption (A3), there is 
a function $\psi\in C^1(\ol\gO)$ such that $D_\gg\psi(x)>0$ for $x\in \gG$. 
By multiplying $\psi$ by a positive constant, we may assume that $D_\gg\psi(x)\geq |g(x)|$ 
for $x\in\gG$. Selecting a constant $C_-\in\R$ small enough, we may have 
$H(x,D\psi(x))\geq C_-$ for $x\in\gO$.  It is easy to check that 
the function $\psi$ is a supersolution of (\ref{eq:i1}), (\ref{eq:i2}), with $C_-$ in 
place of $a$. Similarly, if we choose a constant $C_+\in\R$ large enough, then 
the function $-\psi$ is a subsolution of (\ref{eq:i1}), (\ref{eq:i2}), 
with $C_+$ in place of $a$. 

We define the {\em critical value} (or additive eigenvalue) $c$ by
\[
c=\inf\{a\in\R\mid \hb{there is a subsolution of (\ref{eq:i1}), (\ref{eq:i2})}\}. 
\]
Obviously we have $c\leq C_+$. By Corollary \ref{thm:c2}, we see as well that 
$c\geq C_-$. In particular, we have $c\in\R$.  For any decreasing sequence 
$\{a_k\}$ converging to $c$, there is a sequence $\{u_k\}\subset \USC(\ol\gO)$ 
such that for every $k\in\N$, $u_k$ is a subsolution of (\ref{eq:i1}), (\ref{eq:i2}), 
with $a_k$ in place of $a$. By Lemma \ref{thm:c3}, with $U=\R^n$, we find that 
$\{u_k\}$ is equi-Lipschitz continuous on $\ol\gO$. By adding a constant to $u_k$, 
we may assume that $\{u_k\}$ is uniformly bounded on $\ol\gO$. 
By choosing a subsequence, we may thus assume that the sequence $\{u_k\}$ converges 
to a function $u\in\Lip(\ol\gO)$ as $k\to\infty$. By the stability 
of the viscosity property under uniform 
convergence, we see that $u$ is a subsolution of (\ref{eq:i1}), (\ref{eq:i2}), with $c$ 
in place of $a$. 

Henceforth in this section, we normalize $c=0$ by replacing $H$ by $H-c$, 
and we are concerned only with problem (\ref{eq:i1}), (\ref{eq:i2}), 
with $a=0$, that is, the problem  
\begin{equation}\label{eq:am1}
\left\{
\begin{aligned}
&H(x,\, Du(x))=0 \ \ \hb{ in }\gO,\\
&D_\gg u(x)=g(x) \ \ \hb{ on }\gG.
\end{aligned}
\right.
\end{equation}
We introduce the function $d$ on $\ol\gO\tim\ol\gO$ by
\begin{equation}\label{eq:def-d}
d(x,y)=\sup\{v(x)-v(y)\mid v \hb{ is a subsolution of (\ref{eq:am1})}\}. 
\end{equation}
According to Lemma \ref{thm:c3}, the family of functions $d(\cdot,y)$, with $y\in\ol\gO$, 
is equi-Lipschitz continuous on $\ol\gO$. 
By the stability of the viscosity property, we see that for any $y\in\ol\gO$, the function 
$d(\cdot,y)$ is a subsolution of (\ref{eq:am1}). It is easily seen that 
\[
d(x,y)\leq d(x,z)+d(z,y) \ \ \hb{ for }x,y,z\in\ol\gO. 
\]
Also, in view of the Perron method, 
we find that for every $y\in\ol\gO$, the function $d(\cdot,y)$ is a solution of 
\begin{equation}\label{eq:am2}
\left\{
\begin{aligned}
&H(x,\, Du(x))=0 \ \ \hb{ in }\gO\setminus\{y\},\\
&D_\gg u(x)=g(x) \ \ \hb{ on }\gG\setminus\{y\},
\end{aligned}
\right.
\end{equation}
which is just problem (\ref{eq:b1}), with $f:=0$ and $U:=\R^n\setminus\{y\}$.

We define the {\em Aubry-Mather set} $\cA$ associated with (\ref{eq:am1}) 
(or (\ref{eq:i1}), (\ref{eq:i2}) with generic $a$) by
\[
\cA=\{y\in\ol\gO\mid d(\cdot,y)\hb{ is a solution of (\ref{eq:am1})}\}. 
\] 

\begin{thm}\label{thm:am1}
The Aubry-Mather set $\cA$ is a nonempty and compact. 
\end{thm}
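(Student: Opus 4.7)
For compactness, my plan is to show that $\cA$ is closed in the compact set $\ol\gO$. The family $\{d(\cdot,y)\}_{y\in\ol\gO}$ consists of subsolutions of (\ref{eq:am1}) normalized by $d(y,y)=0$, hence by Lemma \ref{thm:c3} (applied with $U:=\R^n$) is equi-Lipschitz on $\ol\gO$ with some constant $K$. Combined with the triangle-type inequality $|d(x,y)-d(x,y')|\le\max(d(y,y'),d(y',y))\le K|y-y'|$, this yields $d(\cdot,y_k)\to d(\cdot,y)$ uniformly whenever $y_k\to y$. If each $d(\cdot,y_k)$ is a solution of (\ref{eq:am1}), stability of the viscosity property under uniform convergence then forces $d(\cdot,y)$ to be a solution as well, so $y\in\cA$.

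For nonemptiness I would argue by contradiction, assuming $\cA=\emptyset$. For every $y\in\ol\gO$, $d(\cdot,y)$ is then a subsolution of (\ref{eq:am1}) that fails to be a supersolution at $y$, so negating the viscosity supersolution condition produces $\phi_y\in C^1(\ol\gO)$ with $d(\cdot,y)-\phi_y$ attaining a strict local minimum at $y$ together with the strict inequalities $H(y,D\phi_y(y))<0$ and, when $y\in\gG$, $D_\gg\phi_y(y)<g(y)$. By continuity these strict inequalities persist on some neighborhood $V_y$ of $y$ with a constant $\mu_y>0$, so $\phi_y$ is a classical strict subsolution of the PDE on $V_y\cap\gO$, and (when $y\in\gG$) satisfies $D_\gg\phi_y\le g-\mu_y$ on $V_y\cap\gG$. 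I would then lift $d(\cdot,y)$ strictly above itself at $y$ by putting $\tilde u_y:=\max(d(\cdot,y),\,\phi_y-\phi_y(y)+\gep_y)$ on $V_y$ and $\tilde u_y:=d(\cdot,y)$ outside, with $\gep_y>0$ chosen small enough (using the strict-minimum property) for the two definitions to agree near $\pl V_y$. The function $\tilde u_y$ is then a global subsolution of (\ref{eq:am1}) that coincides, on an open neighborhood $U_y\ni y$, with the $C^1$ strict subsolution $\phi_y-\phi_y(y)+\gep_y$.

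By compactness I extract a finite subcover $U_{y_1},\dots,U_{y_N}$ of $\ol\gO$, arranging that any $U_{y_i}$ meeting $\gG$ has $y_i\in\gG$ so that the strict Neumann inequality is available wherever it is needed on $\gG$. Choosing continuous cutoffs $\chi_i$ with $\supp\chi_i\subset U_{y_i}$ and $\sum_i\chi_i\ge 1$ on $\ol\gO$, each $\tilde u_{y_i}$ becomes a subsolution of $H(x,Du)\le -\mu\chi_i(x)$ in $\gO$ with the analogous perturbed Neumann inequality, where $\mu:=\min_i\mu_{y_i}>0$. Applying Theorem \ref{thm:b2} (extended to the variable-boundary-data setting by revisiting the proof of Lemma \ref{thm:b6}) with equal weights $1/N$ to the family $\{\tilde u_{y_i}\}$, the convex combination $\tilde u:=\fr 1N\sum_i\tilde u_{y_i}$ is a subsolution of $H(x,Du)\le -\mu/N$ in $\gO$ and $D_\gg\tilde u\le g-\mu/N$ on $\gG$; this contradicts $c=0$, since such a subsolution forces $c\le-\mu/N<0$. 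The main obstacle I anticipate is exactly this extension: Theorem \ref{thm:b2} as stated keeps the boundary datum $g$ fixed, so I would need to verify, using the linearity of $D_\gg$, that the same averaging argument transfers pointwise strict Neumann inequalities on the cover of $\gG$ into a globally strict Neumann inequality for the convex combination.
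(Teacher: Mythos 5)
Your argument is correct and follows essentially the same route as the paper: compactness via equi-Lipschitz continuity (Lemma \ref{thm:c3}) and stability under uniform convergence, and nonemptiness by contradiction, producing near each $y\notin\cA$ a global subsolution that agrees near $y$ with a $C^1$ strict subsolution, then covering $\ol\gO$ and averaging via Theorem \ref{thm:b2}; this is precisely the content of Lemma \ref{thm:am2} and its use in the paper's proof. The one place you diverge is in carrying a strict margin in the Neumann condition ($D_\gg\tilde u_{y_i}\leq g-\mu\chi_i$) and consequently anticipating, as your ``main obstacle,'' an extension of Theorem \ref{thm:b2} to variable boundary data. That obstacle is self-imposed and unnecessary. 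The paper encodes all the strictness in the right-hand side $f$ only: Lemma \ref{thm:am2} yields $v_y$ subsolution of $H\leq f_y$ in $\gO$ with the \emph{unchanged} datum $g$, and Theorem \ref{thm:b2} then applies verbatim. Two observations make your version collapse to this: first, since $g-\mu\chi_i\leq g$, each $\tilde u_{y_i}$ is a fortiori a subsolution with datum $g$ (the boundary requirement for subsolutions is an ``either/or,'' and $D_\gg\leq g-\mu\chi_i$ implies $D_\gg\leq g$); second, the contradiction with $c=0$ needs no strict boundary inequality at all, because the critical value is defined with the fixed datum $g$, so a subsolution of $H\leq -\mu/N$ in $\gO$ with $D_\gg\leq g$ on $\gG$ already shows $c\leq -\mu/N<0$. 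Dropping the boundary strictness thus removes the only unverified step and reduces your proof to the paper's.
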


\begin{remark}If we define the function $d_a$ on $\ol\gO\tim\ol\gO$ by
\[
d_a(x,y)=\sup\{v(x)-v(y)\mid v\hb{ is a subsolution of (\ref{eq:i1}), (\ref{eq:i2})}\}, 
\] 
then $d_a(x,y)=\sup\emptyset=-\infty$ for $a<0$. Moreover, if we define the Aubry-Mather set 
$\cA_a$ for $a>0$ by 
\[
\cA_a=\{y\in\ol\gO\mid d_a(\cdot,y)\hb{ is a solution of (\ref{eq:i1}), (\ref{eq:i2})}\},
\]
then $\cA_a=\emptyset$. 

\end{remark}

The non-emptiness of $\cA$ will be proved based on the following observation. 

\begin{lem}\label{thm:am2}
Let $y\in\ol\gO\setminus\cA$. Then there are functions $v\in \Lip(\ol\gO)$ and $f\in C(\ol\gO)$ 
such that $f(y)<0$, $f(x)\leq 0$ for $x\in\ol\gO$ and $v$ is a subsolution of 
{\em(\ref{eq:b1})}, 
with $U=\R^n$. 
\end{lem}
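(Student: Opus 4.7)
The strategy is to exploit the failure of $w:=d(\cdot,y)$ to be a supersolution at $y$: a $C^1$ test function realizing that failure will be used to push $w$ strictly above its own values on a small neighborhood of $y$, manufacturing PDE slack there which is then packaged as the required $f$.

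By Theorem~\ref{thm:b7} the supremum $w=d(\cdot,y)$ of subsolutions of~(\ref{eq:am1}) is itself a subsolution on $\ol\gO$, and by the paper's earlier remark (Perron-type argument) it is a solution of the punctured problem~(\ref{eq:am2}). Hence $y\notin\cA$ forces $w$ to violate the supersolution condition only at $y$: there is $\phi\in C^1(\ol\gO)$ such that $w-\phi$ has a local minimum at $y$, with $H(y,D\phi(y))<0$ and, when $y\in\gG$, also $D_\gg\phi(y)<g(y)$. Replacing $\phi$ by $\phi(x)+(w(y)-\phi(y))-|x-y|^2$ leaves $D\phi(y)$ (hence the two strict inequalities) unchanged, normalizes $\phi(y)=w(y)$, and makes the minimum strict. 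Continuity then yields $r,\gd>0$ with $H(x,D\phi(x))<-\gd$ on $\ol{B(y,2r)}\cap\ol\gO$ and, if $y\in\gG$, $D_\gg\phi(x)<g(x)-\gd$ on $\ol{B(y,2r)}\cap\gG$ (shrinking $r$ in the interior case so that $\ol{B(y,2r)}\subset\gO$).

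Strictness of the minimum gives $m:=\min\{w(x)-\phi(x):x\in\ol\gO,\,r\le|x-y|\le 2r\}>0$. Fix $\gep\in(0,m)$ and set
\[
v(x):=\begin{cases}\max\{w(x),\,\phi(x)+\gep\}&x\in\ol{B(y,2r)}\cap\ol\gO,\\ w(x)&\text{otherwise.}\end{cases}
\]
On the annular region $r\le|x-y|\le 2r$ one has $w-\phi\ge m>\gep$, so both branches agree and $v\in\Lip(\ol\gO)$. Both $w$ and $\phi+\gep$ are subsolutions of~(\ref{eq:am1}) in $B(y,2r)\cap\ol\gO$---the latter classically by the strict inequalities above---and the standard fact that the pointwise maximum of viscosity subsolutions is again a viscosity subsolution extends to the disjunctive Neumann setting: at any max point of $v-\psi$, the branch that attains $v$ inherits the local-max property and supplies either $H(x_0,D\psi(x_0))\le 0$ (in $\gO$) or the disjunction $H(x_0,D\psi(x_0))\le 0$ or $D_\gg\psi(x_0)\le g(x_0)$ (on $\gG$). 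Hence $v$ is a subsolution of~(\ref{eq:am1}) on $\ol\gO$.

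Since $\phi(y)+\gep>w(y)$, continuity supplies $r'\in(0,r]$ with $\phi+\gep>w$ on $\ol{B(y,r')}\cap\ol\gO$, so $v\equiv\phi+\gep$ there. Pick $\chi\in C^\infty_c(\R^n)$ with $0\le\chi\le 1$, $\chi(y)=1$, $\supp\chi\subset B(y,r')$, and set $f:=-\gd\chi\in C(\ol\gO)$; then $f\le 0$ and $f(y)=-\gd<0$. Check pointwise that $v$ is a subsolution of~(\ref{eq:b1}) with this $f$: on $\supp\chi$, $v=\phi+\gep$ is smooth with $H(x,Dv)=H(x,D\phi)<-\gd\le -\gd\chi(x)=f(x)$ and, at any boundary portion, $D_\gg v=D_\gg\phi<g-\gd<g$, so the classical inequalities hold, which translate into the viscosity subsolution property (for smooth $v$ at $x_0\in\gG$, any test $\psi$ with $v-\psi$ maximal satisfies $D\psi(x_0)=Dv(x_0)-\alpha\nu(x_0)$ for some $\alpha\ge 0$, hence $D_\gg\psi\le D_\gg v<g$). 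Off $\supp\chi$ one has $f=0$, and the already-established subsolution property of $v$ for~(\ref{eq:am1}) delivers $H(x,Dv)\le 0=f(x)$ and $D_\gg v\le g$. The subtlest ingredient is the max-of-subsolutions preservation under the oblique Neumann condition; the remainder is routine bookkeeping with neighborhoods and a cutoff.
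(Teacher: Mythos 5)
Your proof is correct and follows essentially the same route as the paper: take $w=d(\cdot,y)$, extract a test function $\phi$ witnessing the failure of the supersolution property at $y$, set $v=\max(w,\phi+\gep)$ (which equals $w$ away from $y$ and $\phi+\gep$ near $y$), and manufacture the strictly negative $f$ from the strict PDE slack of $\phi$ in the inner neighborhood. The details you supply (subtracting $|x-y|^2$ to make the minimum strict, the max-of-subsolutions argument in the oblique Neumann setting, the cutoff $\chi$, and the classical-to-viscosity check for smooth $v$ at $\gG$) are the same standard bookkeeping the paper carries out, if a bit more explicitly.
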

    
\begin{proof}
Fix any $y\in\ol\gO\setminus \cA$ and set $u(x)=d(x,y)$ for $x\in\ol\gO$. 
For definiteness, we consider the case where $y\in\gG$. We leave it to the reader 
to check the other case. Since $u$ is not a supersolution of (\ref{eq:am1}) while it is a 
solution of (\ref{eq:am2}),
we find a $C^1$ function $\phi$ on $\ol\gO$ such that $u-\phi$ attains a 
strict minimum at $y$,
\[
H(y,\,D\phi(y))<0 \ \ \hb{ and } \ \ D_\gg\phi(y)<g(y). 
\]
By continuity, there is an open neighborhood $V$ of $y$ such that
\begin{equation}\label{eq:am3}
H(x,\,D\phi(x))<0 \ \hb{ for } x\in \gO_V  \ \ 
\hb{ and } \ \ D_\gg\phi(x)<g(x) \ \hb{ for }x\in\gG_V. 
\end{equation}
We may assume by adding a constant to $\phi$ that $u(y)=\phi(y)$. Note that
$\min_{\ol\gO\setminus V} (u-\phi)>0$, and 
select a constant $\gep>0$ small enough so that $(u-\phi)(x)>\gep$ for $x\in\ol\gO\setminus V$. 
We may choose an open neighborhood $W$ of $V^c$ such that $(u-\phi)(x)>\gep$ for $x\in \ol\gO\cap W$.  
We set $v(x)=u(x)\vee (\phi(x)+\gep)$ for $x\in\ol\gO$. 

Observe that $v(x)=u(x)$ for $x\in W\cap \ol\gO$, which ensures that 
$v$ is a subsolution of (\ref{eq:b1}), with $f(x):=0$ and $U:=W$. 
On the other hand, 
there is an open neighborhood $Y\subset V$ of $y$ such that 
$\phi(x)+\gep>u(x)$ for $x\in Y\cap\ol\gO$.  It is clear that 
$\ol\gO\cap Y\cap W=\emptyset$.  In view of (\ref{eq:am3}), 
we may choose a function $f\in C(\ol\gO)$ so that $f(y)<0$, $f(x)\leq 0$ for $x\in Y$, 
$f(x)=0$ for $x\in\ol\gO\setminus Y$ and 
\[
H(x,\,D\phi(x))\leq f(x) \ \hb{ for } x\in \gO_V \ \ 
\hb{ and } \ \ D_\gg\phi(x)\leq g(x) \ \hb{ for }x\in\gG_V. 
\]
It is easily seen that $v$ is a subsolution of (\ref{eq:b1}), with $U:=V$. 
Finally, we note that $v$ is a subsolution of (\ref{eq:b1}), with  
$U:=\R^n$, and finish the proof. 
\end{proof}

\begin{proof}[Proof of Theorem {\em \ref{thm:am1}}]
The compactness of $\cA$ follows directly from the stability 
of the viscosity property under uniform convergence. 

To see that $\cA\not=\emptyset$, we suppose by contradiction that 
$\cA=\emptyset$. By Lemma \ref{thm:am2}, for each $y\in\ol\gO$ there are 
functions $v_y\in\Lip(\ol\gO)$ and $f_y\in C(\ol\gO)$ such that 
$f_y(y)<0$, $f_y(x)\leq 0 $ for $x\in\ol\gO$ and  $v_y$ is a subsolution of 
(\ref{eq:b1}), with $f:=f_y$ and $U:=\R^n$. By the compactness of $\ol\gO$, 
we may choose a finite sequence $\{y_j\}_{j=1}^J\subset\ol\gO$ so that 
$\sum_{j=1}^J f_{y_j}(x)<0$ for $x\in\ol\gO$. Theorem 
\ref{thm:b2}, with $U:=\R^n$, guarantees that the function 
\[
v(x)=\fr{1}{J}\sum_{j=1}^Jv_{y_j}(x)
\]
on $\ol\gO$ is a subsolution of (\ref{eq:b1}), with $U:=\R^n$ and
\[
f(x):=\fr{1}{J}\sum_{j=1}^J f_{y_j}(x). 
\]
We choose a constant $a<0$ so that $f(x)\leq a$ for $x\in\ol\gO$ and observe that 
$v$ is a subsolution of (\ref{eq:b1}), with $f:=a$ and $U:=\R^n$. This contradicts 
the fact that $c=0$. The proof is complete.  
\end{proof}

\begin{prop}\label{thm:am3} The function $d$ can be represented as 
\begin{equation}\label{eq:am4}
\begin{split}
d(x,y)=\inf\Big\{\int_0^t \big(L(\eta(s),\,-v(s))+&\,g(\gg(s))l(s)\big)\d s\mid t>0,\\ 
&(\eta,\,v,\,l)\in \SP(x),\ \eta(t)=y\Big\}. 
\end{split}
\end{equation}
\end{prop}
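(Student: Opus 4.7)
Denote the right-hand side of (\ref{eq:am4}) by $V(x)$, so the goal is $V(x)=d(x,y)$ for each fixed $y\in\ol\gO$. I plan to prove the two inequalities separately. The direction $d(x,y)\le V(x)$ is a differential-inequality comparison between arbitrary subsolutions of (\ref{eq:am1}) and admissible Skorokhod actions, while the direction $V(x)\le d(x,y)$ is obtained by showing that $V(\cdot)$ is itself a subsolution of (\ref{eq:am1}) with $V(y)\le 0$ and then applying the defining supremum (\ref{eq:def-d}).

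For $d\le V$, fix a subsolution $w\in\Lip(\ol\gO)$ and any $(\eta,v,l)\in\SP(x)$ with $\eta(t)=y$; the target inequality is
\[
w(x)-w(y)\leq \int_0^t\bigl(L(\eta(s),-v(s))+g(\eta(s))l(s)\bigr)\,\d s.
\]
When $w\in C^1$ solves $H(\cdot,Dw)\le 0$ and $D_\gg w\le g$ classically, the bound is a direct chain-rule computation using $\dot\eta=v-l\gg(\eta)$, the Fenchel--Young inequality $Dw\cdot v\ge -L(\cdot,-v)-H(\cdot,Dw)$, and $l\ge 0$. For a general Lipschitz viscosity subsolution $w$, I would subtract a $C^1$ corrector $\psi_\mu$ with $D_\gg\psi_\mu\ge g+\mu$ (as in the proof of Theorem \ref{thm:b1}) to put the problem in the framework of Lemma \ref{thm:b4}, then take a sup-convolution $w^\gd$ patched globally by a finite partition of unity along $\gG$. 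This produces a Lipschitz, semi-convex near-subsolution to which the classical computation applies a.e.\ along $\eta$. Passing $\gd\to 0$ and then $\mu\to 0$ yields the claim, and supping over $w$ together with infing over trajectories closes this direction.

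For $V\le d$, concatenation of Skorokhod paths yields the sub-dynamic programming inequality
\[
V(x)\leq \int_0^s\bigl(L(\eta,-v)+g(\eta)l\bigr)\,\d\tau+V(\eta(s)),\qquad (\eta,v,l)\in\SP(x),\ s\ge 0.
\]
A standard viscosity-control test-function argument, entirely analogous to the subsolution part of the proof of Theorem \ref{thm:cn1} but stationary, then shows that $V$ is a viscosity subsolution of (\ref{eq:am1}) on $\ol\gO$; boundary test points on $\gG$ are handled by choosing $(v,l)$ pointing inward along $-\gg$ or reflecting along $\gg$. The degenerate trajectory $\eta\equiv y$, $v\equiv 0$, $l\equiv 0$ gives $V(y)\le tL(y,0)$ for every $t>0$, and the sub-DPP applied at $y$ forces $L(y,0)\ge 0$, so $V(y)\le 0$ upon sending $t\to 0^+$. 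The definition (\ref{eq:def-d}) then yields $V(x)-V(y)\le d(x,y)$, hence $V(x)\le d(x,y)$.

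The principal technical obstacle is the regularization step in the upper-bound direction: justifying the a.e.\ chain rule $(w^\gd\circ\eta)'(s)=Dw^\gd(\eta(s))\cdot\dot\eta(s)$ on $[0,t]$ and controlling the reflection term $l\,D_\gg w^\gd(\eta)$ uniformly as $\gd\to 0$, especially when $\eta$ spends positive time on $\gG$. Semi-convexity of $w^\gd$ combined with absolute continuity of $\eta$ should secure the chain rule through a Rademacher-type argument, while the boundary estimate $D_\gg^+ w^\gd\le\gep$ from Lemma \ref{thm:b4} absorbs the reflection term up to a vanishing error. A secondary obstacle is verifying the subsolution property of $V$ at the point $y$ itself, which I plan to address by the same test-function argument, exploiting the availability of Skorokhod trajectories starting at $y$ in any inward direction.
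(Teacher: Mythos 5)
Your second inequality ($V\le d$) is essentially the paper's own argument: the paper shows that the right-hand side $w$ of (\ref{eq:am4}) satisfies $w(y)=0$, is continuous at $y$, and is a viscosity subsolution of (\ref{eq:am1}) by exactly the test-function/dynamic-programming argument of the subsolution half of Theorem \ref{thm:cn1} (with Lemma \ref{thm:cn4} supplying the near-optimal trajectory), and then concludes from the definition (\ref{eq:def-d}). Two small remarks on your version of this half: at boundary test points the admissible control is produced by Lemma \ref{thm:cn4}, not by hand-picking $(v,l)$ ``along $-\gg$''; and your derivation of $L(y,0)\ge 0$ from the sub-DPP tacitly assumes $V(y)$ is finite, so either use your first inequality beforehand (then $V(y)\ge d(y,y)=0$), or simply note that $L(x,0)=-\min_{p}H(x,p)\ge 0$ for every $x$ because the normalization $c=0$ provides a Lipschitz subsolution of $H\le 0$; the paper instead gets $w(y)=0$ and $w(x)\le C|x-y|$ from the cheap constant-speed curve estimate.

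The genuine divergence, and the place where your argument is not yet a proof, is the first inequality $d\le V$. The paper never integrates an arbitrary subsolution along a reflected trajectory: it regards $d(\cdot,y)$ as a time-independent subsolution of the evolution problem (\ref{eq:i3})--(\ref{eq:i4}), observes via Theorem \ref{thm:cn1} that the value function (\ref{eq:cn1}) with initial datum $d(\cdot,y)$ solves that problem, applies the comparison Theorem \ref{thm:c4} (with $U=\R^n$) to get $d(x,y)\le u(x,t)$, and then restricts the infimum to triples with $\eta(t)=y$ and uses $d(y,y)=0$; no regularization is needed. Your route—sup-convolving the subsolution as in Lemma \ref{thm:b4} and running an a.e.\ chain rule along $\eta$—must confront precisely the point you flag: on the (possibly positive-measure) set of times with $\eta(s)\in\gG$ you need, at boundary points, both $\gg\cdot Dw^\gd\le g+o(1)$ and the Hamiltonian bound $H(x,Dw^\gd(x))\le o(1)$, the latter to control $Dw^\gd\cdot v$ through the Fenchel--Young inequality $Dw^\gd\cdot v+L(\eta,-v)\ge -H(\eta,Dw^\gd)$. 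But Lemma \ref{thm:b4} as stated yields the subsolution property of $w^\gd$ only in the open set $\gO_{r-\gep}$, together with the one-sided bound $D_\gg^+w^\gd\le\gep$ on $\gG_{r-\gep}$; the Hamiltonian estimate up to the boundary (or some other treatment of the boundary-time set), as well as the patching of the locally flattened coordinates, is an additional step you would have to supply. This can be done, but it is exactly the technical burden the paper's comparison-based argument is designed to avoid, and as written it remains a gap in your proposal.
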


\begin{proof} Fix any $y\in\ol\gO$. We denote by $w(x)$ 
the right side of (\ref{eq:am4}). 
According to Theorem \ref{thm:cn1}, the function 
\[
\begin{split}
u(x,t):=\inf\Big\{\int_0^t & L(\eta(s),\,-v(s))+g(\eta(s))l(s)\d s \\
&+d(\eta(t),y)\mid (\eta,\,v,\,l)\in\SP(x)\Big\}
\end{split}
\]
is a solution of (\ref{eq:i3})--(\ref{eq:i5}), with $u_0:=d(\cdot,\,y)$. 
Noting that the function 
$d(x,y)$, as a function of $(x,t)\in\ol\gO\tim \ol\R_+$, is 
a subsolution of (\ref{eq:i3})--(\ref{eq:i5}) with $u_0:=d(\cdot,\,y)$,  
by applying the comparison theorem (Theorem \ref{thm:c4}, 
with $U=\R^n$), we see that $d(x,y)\leq u(x,t)$ for $(x,t)\in\ol\gO\tim \R_+$. 
Since $d(y,y)=0$, we have $\inf_{t>0}u(x,t)\leq w(x)$ for $x\in\ol\gO$. 
Consequently, we have $d(x,y)\leq w(x)$ for $x\in\ol\gO$.  

By the $C^1$ regularity of $\gO$, for each $x\in\ol\gO$ we may choose a 
Lipschitz continuous curve $\eta$ on $[0,\,t]$ connecting $x$ to $y$ in $\ol\gO$,
with a Lipschitz constant independent of $x$. Here $t>0$ is an appropriate constant, 
and moreover we may assume that 
$t\leq C_1|x-y|$ for some constant $C_1>0$ independent of $x$.   
As is well-known and easily shown, $L(x,\xi)$ is bounded on $\ol\gO\tim B(0,\gd)$, if $\gd>0$ is chosen 
sufficiently small. 
Fix such a constant $\gd>0$ and choose a constant $C_2>0$ so that 
$L(x,\xi)\leq C_2$ for $(x,\xi)\in\ol\gO\tim B(0,\gd)$.  
By scaling, we may assume that $|\dot\eta(s)|\leq \gd$ for a.e. $s\in[0,\,t]$.  
Noting that $(\eta,\,\dot\eta,\,0)\in\SP(x)$, we get 
\[
w(x)\leq \int_0^t L(\eta(s),-\dot\eta(s))\d s \leq C_2 t\leq C_1C_2|x-y|.
\]
In particular, we may conclude that $w$ is continuous at $y$ and $w(y)=0$.

To complete the proof, it is enough to show that $w$ is a subsolution of (\ref{eq:am1}). Indeed, once this is done, 
by the definition of $d$, we get
\[
w(x)=w(x)-w(y)\leq d(x,y) \ \ \hb{ for }x\in\ol\gO,
\]
which guarantees that $d(x,y)=w(x)$ for $x\in\ol\gO$. 

To prove the subsolution property of $w$, we just need to follow the argument of the proof 
of Theorem \ref{thm:cn1}. 
Let $\x\in \ol\gO$ and $\phi\in C^1(\ol \gO)$. Assume that 
$w^*-\phi$ attains a strict maximum at $\x$. We need to show that if $\x\in\gO$, then  
$H(\x,D\phi(\x))\leq 0$, and if $\x\in\gG$, then either 
\begin{equation}
H(\x,D\phi(\x))\leq 0 \ \ \hb{ or } \ \ \gg(\x)\cdot D\phi(\x)\leq g(\x). 
\label{eq:am5}
\end{equation}
We are here concerned only with the case where $\x\in\gG$, and leave the proof 
in the other case to the reader.  
To show (\ref{eq:am5}), we suppose by 
contradiction that (\ref{eq:am5}) were false. 
Then we may choose an $\gep\in(0,\,1)$ so that 
for $x\in \ol\gO\cap B(\x,\,2\gep)$, 
\beeq
H(x,D\phi(x))\geq 2\gep \ \ \hb{ and  } \ \ 
\gg(x)\cdot D\phi(x)-g(x)\geq 2\gep,
\label{eq:am6}
\eneq  
where $\gg$ and $g$ are, as usual, assumed to be defined and continuous on $\ol\gO$. 
We may also assume that $(w^*-\phi)(\x)=0$. 
Set
\[
B=\pl B(\x,2\gep)\, \cap\, \ol \gO,
\]
and 
$m=-\max_{B}(w^*-\phi)$.  
Obviously, we have $m>0$ and
$w(x)\leq \phi(x)-m$ for $x\in B$. 
We choose a point $\bar x\in \ol\gO\cap B(\x,\gep)$ 
so that $(w-\phi)(\bar x)>-\gep^2\wedge m$.  
We apply Lemma \ref{thm:cn4}, to obtain a triple $(\eta,v,l)\in\SP(\bar x)$ such that 
for a.e. $s\geq 0$, 
\beeq
H(\eta(s),D\phi(\eta(s)))+L(\eta(s),-v(s))\leq \gep-v(s)\cdot D\phi(\eta(s))
\label{eq:am7}
\eneq 
Note that $\dist(\bar x, \pl B(\x,2\gep))\geq \gep$, and set 
\[
\tau=\inf\{s>0\mid \eta(s)\in \pl B(\x,2\gep)\}. 
\]
Consider first the case where $\tau=\infty$, which means that 
$\eta(s)\in\Int B(\x,2\gep)$ for all $s\geq 0$. 
By the dynamic programming 
principle, we have
\[
\phi(\bar x)< w(\bar x)+\gep^2
\leq  \int_0^\gep \big(L(\eta(s),-v(s))+g(\eta(s))l(s)+\gep\big)\d s
+\phi(\eta(\gs)).
\]
Hence, we obtain 
\begin{align*}
0<&\, \int_0^\gep\big(L(\eta(s),-v(s))+g(\eta(s))l(s)+\gep
+D\phi(\eta(s))\cdot\dot\eta(s)\big)\d s\\ 
\leq &\, \int_0^\gep\big\{L(\eta(s),-v(s))+g(\eta(s))l(s)+\gep
+D\phi(\eta(s))\cdot\big(v(s)-l(s)\gg(\eta(s))\big)
\big\}\d s.
\end{align*}
Now, using (\ref{eq:am7}) and (\ref{eq:am6}), we get 
\begin{align*}
0&<\int_0^\gep\big\{2\gep-H(\eta(s),D\phi(\eta(s)))+g(\eta(s))l(s)
-D\phi(\eta(s))\cdot \gg(\eta(s))l(s)\big\}\d s \\
&\leq 0, 
\end{align*}
which is a contradiction. 

Consider next the case where $\tau<\infty$. Note that 
\begin{align*}
\phi(\bar x)<&\, w(\bar x)+m 
\leq \int_0^\tau 
\big(L(\eta(s),-v(s))+g(\eta(s))l(s)\big)\d s+w(\eta(\tau))+m\\
\leq&\,  \int_0^\tau 
\big(L(\eta(s),-v(s))+g(\eta(s))l(s)\big)\d s+\phi(\eta(\tau)).
\end{align*}
Using (\ref{eq:am7}) and (\ref{eq:am6}) as before, we obtain 
\[
0<\int_0^\tau \big\{\gep-H(\eta(s),D\phi(\eta(s)))
+l(s)[g(\eta(s))-\gg(\eta(s))\cdot D\phi(\eta(s))]\big\}\d s
<0. 
\]
This is again a contradiction, and we conclude that $w$ 
is a subsolution of (\ref{eq:am1}).  
\end{proof}

We give another characterization of the Aubry-Mather set associated with (\ref{eq:am1}). 

\begin{thm}\label{thm:am4}Let $\tau>0$ and $y\in\ol\gO$. 
Then we have $y\in\cA$ if and only if 
\begin{equation}\label{eq:am8}
\begin{split}
\inf\Big\{\int_0^t \big(L(\eta(s),\,-v(s))+&\,g(\eta(s))l(s)\big)\d s\mid t>\tau,\\
& (\eta,\,v,\,l)\in\SP,\
 \eta(0)=\eta(t)=y\Big\}=0.
 \end{split} 
\end{equation}
\end{thm}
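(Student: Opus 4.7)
The plan is to reinterpret the infimum $I_\tau$ in (\ref{eq:am8}) via the value function $w$ furnished by Theorem \ref{thm:cn1} with initial datum $u_0:=d(\cdot,y)$, and to reduce the theorem to the identity $I_\tau = w(y,\tau)$ together with the equivalence: $w(y,\tau)=0$ iff $d(\cdot,y)$ satisfies the stationary supersolution condition at $y$.

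First I will show $I_\tau=\inf_{t>\tau}w(y,t)$. The upper bound will be obtained by restricting to loops: any loop $(\eta,v,l)\in\SP(y)$ of duration $t$ satisfies $\eta(t)=y$ and $u_0(y)=0$, so $\int_0^t (L+gl)\d s \geq w(y,t)$. For the lower bound I will take a near-optimizer of $w(y,t)$ in $\SP(y)$ and, via Proposition \ref{thm:am3}, extend it by an element of $\SP(\eta(t))$ reaching $y$ in some additional time $t'>0$ with action close to $d(\eta(t),y)$; the semigroup property of $\SP$ then produces a loop at $y$ of duration $t+t'>\tau$ whose action approximates $w(y,t)$ within $2\gep$. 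Next I will establish non-negativity and monotonicity of $t\mapsto w(y,t)$: non-negativity follows because $u_0$ is a stationary subsolution, so its time-independent extension $\tilde u(x,t):=u_0(x)$ is an evolution subsolution and Theorem \ref{thm:c4} (applied with $U:=\R^n$) gives $w\geq u_0$; monotonicity will come from the dynamic programming identity $w(x,t+s)=\inf_{\SP(x)}\{\int_0^s(L+gl)\d\tau + w(\eta(s),t)\}$, since $w(\cdot,t)\geq u_0$ forces the right side to dominate $w(x,s)$. Combined with the continuity of $w$ from Theorem \ref{thm:cn1}, this yields $I_\tau=w(y,\tau)\geq 0$.

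The forward implication ($\Rightarrow$) will then be immediate: if $y\in\cA$ then $d(\cdot,y)$ is a stationary solution, so $\tilde u$ is a solution of (\ref{eq:i3})--(\ref{eq:i5}) (with $a=0$) having the same initial datum as $w$, and the uniqueness in Theorem \ref{thm:c4} gives $w\equiv \tilde u$, whence $w(y,\tau)=0$. For the converse, assuming $I_\tau=w(y,\tau)=0$, monotonicity and non-negativity force $w(y,t_0)=0$ for $t_0:=\tau/2\in(0,\tau)$. Since $d(\cdot,y)$ is already a solution on $\ol\gO\setminus\{y\}$, I only need to verify the stationary supersolution condition at $y$. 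Given an arbitrary $\phi\in C^1(\ol\gO)$ with $d(\cdot,y)-\phi$ attaining a local minimum at $y$, I will replace $\phi$ by $\phi(x)-|x-y|^4$ to make the minimum strict (without altering $\phi(y)$, $D\phi(y)$, or $D_\gg\phi(y)$), normalize $\phi(y)=0$, and then form $\tilde\phi(x,t):=\phi(x)-\alpha(t-t_0)^2$ with $\alpha>0$ small. A direct check will show that $w-\tilde\phi$ has a strict local minimum at the interior point $(y,t_0)\in\ol\gO\tim(0,\tau)$, because $(w-\tilde\phi)(x,t)\geq (u_0-\phi)(x)+\alpha(t-t_0)^2\geq c|x-y|^4+\alpha(t-t_0)^2$ and vanishes at $(y,t_0)$. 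Applying the supersolution property of $w$ from Theorem \ref{thm:cn1} and using $\tilde\phi_t(y,t_0)=0$, $D\tilde\phi(y,t_0)=D\phi(y)$, and $D_\gg\tilde\phi(y,t_0)=D_\gg\phi(y)$ will yield either $H(y,D\phi(y))\geq 0$ or (if $y\in\gG$) $D_\gg\phi(y)\geq g(y)$, which is precisely the supersolution condition for (\ref{eq:am1}) at $y$; since $\phi$ was arbitrary, $d(\cdot,y)$ is a supersolution at $y$, hence $y\in\cA$.

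The principal technical point will be the identity $I_\tau=w(y,\tau)$: it combines the concatenation construction (via Proposition \ref{thm:am3} and the semigroup property of $\SP$) with the monotonicity of $w(y,\cdot)$ extracted from dynamic programming and the subsolution property of $u_0$. Once this identification is in place, the supersolution step is a routine test-function manipulation within viscosity-solution theory.
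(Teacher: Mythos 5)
Your argument is correct, and the converse direction is genuinely different from the paper's. Both proofs share the same mechanics for the ``only if'' implication and for the lower bound on the infimum (Proposition \ref{thm:am3} for $I_\tau\geq d(y,y)=0$, and for the ``only if'' direction the identification $w\equiv d(\cdot,y)$ via uniqueness followed by concatenation using the semigroup property of $\SP$). Where you diverge is in the ``if'' direction: the paper passes to the long-time liminf $u^-(x)=\liminf_{t\to\infty}u(x,t)$ and invokes Lemma \ref{thm:am5} (which in turn rests on the evolution-problem stability result, Theorem \ref{thm:b8}) to obtain a stationary solution that must coincide with $d(\cdot,y)$; you instead derive the supersolution property of $d(\cdot,y)$ at the single point $y$ by a local test-function argument at $(y,t_0)$ with $0<t_0<\tau$, after first establishing $w\geq u_0$ by comparison, the monotonicity of $w(y,\cdot)$ from dynamic programming, and thence $w(y,t_0)=0$. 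Your route is more self-contained — it avoids the machinery of Lemma \ref{thm:am5} and Theorem \ref{thm:b8} altogether — and reduces the converse to the standard viscosity manipulation of testing a space-time supersolution by a space-time quadratic perturbation of a stationary test function; it does, however, depend on the intermediate identity $I_\tau=w(y,\tau)$, which you correctly establish via the loop/concatenation construction. The paper's route has the compensating conceptual advantage that it identifies $d(\cdot,y)$ with the weak KAM solution obtained as a long-time limit, a structure that is reused elsewhere in the theory.

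Two small points you should flag when writing this up carefully. First, the strict-local-minimum bound $(w-\tilde\phi)(x,t)\geq|x-y|^4+\alpha(t-t_0)^2$ only holds in the neighborhood where the modified $\phi$ dominates $u_0-|{\cdot}-y|^4$, so the minimum at $(y,t_0)$ is local, not global; this is of course sufficient for the viscosity supersolution test, but it is worth saying so explicitly since the paper's stated convention uses global extrema. Second, in the lower bound $I_\tau\leq\inf_{t>\tau}w(y,t)$, you need the appended path from $\eta(t)$ to $y$ to have strictly positive duration $t'>0$ (as Proposition \ref{thm:am3} guarantees), so the total loop duration $t+t'>t>\tau$ remains admissible — again this works, but the constraint $t>\tau$ in the original infimum is exactly what makes the appending harmless and is worth a sentence.
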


\begin{lem}\label{thm:am5} Let $u_0\in C(\ol\gO)$ and let 
$u\in C(\ol\gO\tim\ol\R_+)$ be the solution of {\em(\ref{eq:i3})--(\ref{eq:i5})}, with $a:=0$.    
Set 
\[
u^-(x)=\liminf_{t\to\infty}u(x,t) \ \ \hb{ for }x\in\ol\gO.
\]
Then $u^-\in\Lip(\ol\gO)$ and it is a solution of {\em(\ref{eq:am1})}.  
\end{lem}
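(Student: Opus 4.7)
The plan is to prove the lemma in three stages: first bound $u$ uniformly in time so that $u^-$ is real-valued; second, derive a dynamic programming inequality for $u^-$ and use it to show $u^-$ is Lipschitz; third, obtain the sub- and supersolution properties of $u^-$ for (\ref{eq:am1}) by identifying relaxed semi-limits of the family $u(\cdot,\cdot+T)$ with $u^-$ and invoking stability.

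For the boundedness, since the critical value is normalized to $c=0$ and $\cA\neq\emptyset$ by Theorem \ref{thm:am1}, the function $d(\cdot,y_0)$ for any $y_0\in\cA$ is a Lipschitz solution of (\ref{eq:am1}). Adjusting additive constants yields $\phi_\pm\in\Lip(\ol\gO)$ which are solutions of (\ref{eq:am1}) and satisfy $\phi_-\leq u_0\leq\phi_+$. Viewed as time-independent functions, $\phi_-$ and $\phi_+$ are respectively a subsolution and a supersolution of (\ref{eq:i3})--(\ref{eq:i4}) with $a=0$, so Theorem \ref{thm:c4} applied with $U=\R^n$ gives $\phi_-\leq u(\cdot,t)\leq\phi_+$ for every $t\geq 0$. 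In particular, $u$ is bounded on $\ol\gO\tim\ol\R_+$ and $u^-$ is real-valued.

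The Lipschitz estimate is the heart of the matter. For any $(\eta,v,l)\in\SP(x)$ and $\tau>0$, the dynamic programming principle for $u$ from Theorem \ref{thm:cn1} gives
\[
u(x,t+\tau)\leq\int_0^\tau\bigl(L(\eta(s),-v(s))+g(\eta(s))l(s)\bigr)\d s+u(\eta(\tau),t)\quad\hb{for all }t\geq 0.
\]
Since $\liminf$ commutes with addition of constants, taking $\liminf_{t\to\infty}$ on both sides yields
\[
u^-(x)\leq\int_0^\tau\bigl(L(\eta(s),-v(s))+g(\eta(s))l(s)\bigr)\d s+u^-(\eta(\tau)).
\]
Fix $\gd>0$ small enough that $L$ is bounded on $\ol\gO\tim\ol B(0,\gd)$, as used after Lemma \ref{thm:cn3}. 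By the $C^1$ regularity of $\gO$, any two points $x,y\in\ol\gO$ can be joined by a curve $\zeta\in\Lip([0,T],\ol\gO)$ with $\zeta(0)=x$, $\zeta(T)=y$, $|\dot\zeta|\leq\gd$ and $T\leq C|x-y|$ (the $\gd$-slowdown construction used in the proof of Proposition \ref{thm:am3}). Taking $(\eta,v,l):=(\zeta,\dot\zeta,0)\in\SP(x)$ in the displayed inequality bounds the action by $C_\gd T\leq C'|x-y|$, so $u^-(x)-u^-(y)\leq C'|x-y|$. Exchanging $x$ and $y$ yields $u^-\in\Lip(\ol\gO)$.

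With $u^-\in C(\ol\gO)$, the sub- and supersolution properties follow from standard stability arguments. For each $T\geq 0$ the function $u_T(x,t):=u(x,t+T)$ is a solution of (\ref{eq:i3})--(\ref{eq:i4}) with $a=0$ on $\ol\gO\tim[0,\infty)$; using continuity of $u^-$, a direct computation shows that the lower relaxed semi-limit $\underline{U}(x,t):=\liminf_{T\to\infty,\,(y,s)\to(x,t)}u(y,s+T)$ equals $u^-(x)$ for every $(x,t)\in\ol\gO\tim(0,\infty)$. By stability of the supersolution property under lower relaxed semi-limits, $\underline{U}$ is a supersolution of (\ref{eq:i3})--(\ref{eq:i4}); choosing time-independent test functions $\Phi(x,t)=\phi(x)$ transfers this to the supersolution property of $u^-$ for (\ref{eq:am1}). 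For the subsolution property, Theorem \ref{thm:b8} applied to the family $\{u_T\mid T>T_0\}$ shows that $w_{T_0}(x,t):=\inf_{T>T_0}u(x,t+T)$ is a subsolution of (\ref{eq:i3})--(\ref{eq:i4}); since $w_{T_0}$ is nondecreasing in $T_0$ with pointwise limit $u^-$, continuity of $u^-$ shows that the upper relaxed semi-limit of $\{w_{T_0}\}_{T_0>0}$ as $T_0\to\infty$ is $u^-$, and stability yields the subsolution property of $u^-$ for (\ref{eq:am1}). The main obstacle is the Lipschitz bound of the second step, without which the relaxed semi-limits above cannot be cleanly identified with $u^-$; that bound rests on the DPP-to-$\liminf$ transfer together with the $\gd$-slowdown curve construction.
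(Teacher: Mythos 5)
Your overall strategy is sound and arrives at the right result, but the route you take to the Lipschitz bound is genuinely different from the paper's, and the supersolution step has a small gap in its stated justification.

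\textbf{On the Lipschitz step.} The paper works with $v(x,t):=\inf_{s>t}u(x,s)$, notes that $v(x,\cdot)$ is nondecreasing, deduces from this monotonicity together with the subsolution property of $v$ for the evolution equation that $v(\cdot,t)$ is a subsolution of the \emph{stationary} problem (\ref{eq:am1}) for each $t$, and then invokes coercivity (via Lemma \ref{thm:c3}) to get equi-Lipschitz continuity of $\{v(\cdot,t)\}_{t>0}$; since $u^-=\sup_tv(\cdot,t)$, $u^-\in\Lip(\ol\gO)$. You instead pass the dynamic programming inequality from $u$ to $u^-$ by taking $\liminf_{t\to\infty}$ and then plug in the $\gd$-slowdown curve to read off the Lipschitz bound on $u^-$ directly. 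Both are correct; yours is more ``variational'' and avoids the (slightly delicate, though short) argument needed to convert the monotonicity-in-$t$ of $v$ plus the parabolic subsolution property into the stationary subsolution property of $v(\cdot,t)$.

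\textbf{On the supersolution step.} You claim that the lower relaxed semi-limit $\underline{U}(x,t)=\liminf_{T\to\infty,\,(y,s)\to(x,t)}u(y,s+T)$ equals $u^-(x)$ ``using continuity of $u^-$.'' The inequality $\underline{U}\leq u^-$ is immediate, but the reverse inequality does not follow from continuity of $u^-$ alone: continuity of $u^-$ controls $\liminf_{r\to\infty}u(y,r)$ as $y\to x$, not $u(y_k,r_k)$ along a simultaneous sequence $y_k\to x$, $r_k\to\infty$, and it is consistent with continuity of $u^-$ that $u(y_k,r_k)$ dips well below $u^-(x)$ along such a sequence. What actually closes this direction is the same DPP-plus-slowdown mechanism you already used: for $|y-x|<\gep$ take a curve from $x$ to $y$ with action at most $C'\gep$ in time $\tau=O(\gep)$, so $u(y,r)\geq u(x,r+\tau)-C'\gep$, and then $\liminf_{r\to\infty}\inf_{|y-x|<\gep}u(y,r)\geq u^-(x)-C'\gep$; send $\gep\to0$. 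You have the tools in hand, so this is a fixable imprecision rather than a missing idea, but as written the attribution is wrong and the step is not a ``direct computation'' from continuity of $u^-$. (Your subsolution step, in contrast, does use continuity of $u^-$ correctly: the identification of the upper relaxed limit of the nondecreasing family $w_{T_0}$ with $u^-$ needs only upper semicontinuity of $u^-$ and the pointwise monotone convergence.) The paper avoids this issue entirely by establishing, via Dini, the \emph{uniform} convergence $v(\cdot,t)\to u^-$ and then applying stability of the full solution property once, getting both sub- and supersolution properties in one stroke.
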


\begin{proof} Thanks to Theorem \ref{thm:am1}, there is a solution $\phi\in\Lip(\ol\gO)$ 
of (\ref{eq:am1}). By adding a constant to $\phi$ if needed, we may assume that 
$\phi(x)\leq u_0(x)$ for $x\in\ol\gO$. Let $C>0$ be a constant such that 
$u_0(x)\leq \phi(x)+C$ for $x\in\ol\gO$. By comparison, we get 
$\phi(x)\leq u(x,t)\leq \phi(x)+C$ for $x\in\ol\gO$. 

Setting $v(x,t)=\inf_{s>t}u(x,s)$ for $(x,t)\in\ol\gO\tim\R_+$,  we note that 
\[
u^-(x)=\sup_{t>0}v(x,t) \ \ \hb{ for }x\in\ol\gO.  
\]
Applying Theorem \ref{thm:b8} (and the remark after 
it) to the family $\{u(\cdot,\cdot+s)\}_{s>0}$ of solutions 
of (\ref{eq:i3}), (\ref{eq:i4}), with $a:=0$, 
we see that $v$ is a solution of (\ref{eq:i3}), (\ref{eq:i4}), with $a:=0$. 
Observe also that $v\in \USC(\ol\gO\tim\R_+)$ and the functions $v(x,\cdot)$, 
with $x\in\ol\gO$, are nondecreasing on $\R_+$. This monotonicity of $v$ guarantees that 
the functions $v(\cdot,t)$, with $t>0$, are subsolution of (\ref{eq:am1}), 
which implies that the family $\{v(\cdot,t)\}_{t>0}$ is equi-Lipschitz 
continuous on $\ol\gO$. Accordingly, we have $u^-\in\Lip(\ol\gO)$. 
By the Dini lemma, we see that     
\[
u^-(x)=\lim_{t\to\infty}v(x,t) \ \ \hb{ uniformly for }x\in\ol\gO. 
\] 
By the stability of viscosity property under uniform convergence,   
we conclude that $u^-$ is a solution of (\ref{eq:am1}). 
\end{proof}

\begin{proof}[Proof of Theorem {\em\ref{thm:am4}}] Fix any $\tau>0$ and $y\in\ol\gO$. By Proposition \ref{thm:am3}, 
we have
\begin{align}\label{eq:am9}
\inf\Big\{\int_0^t \big(L(\eta(s),-v(s))+\,&g(\eta(s))l(s)\big)\d s\mid (\eta,\,v,\,l)
\in \SP,\ \\
&\eta(0)=\eta(t)=y\Big\}
\geq d(y,y)=0 \ \ \hb{ for }t>0. \nonumber 
\end{align}

We assume that $y\in\cA$ and 
show that (\ref{eq:am8}) holds. 
Note that the function $u(x,t)=d(x,y)$ on $\ol\gO\tim\R$ is the unique solution of 
the initial-boundary value problem (\ref{eq:i3})--(\ref{eq:i5}), 
with $u_0:=d(\cdot,\,y)$. By Theorem \ref{thm:cn1}, we 
get 
\begin{align*}
0=\,&d(y,y)\\
=\,&\inf\Big\{\int_0^\tau \big(L(\eta(s),-v(s))+g(\eta(s))l(s)\big)\d s +
d(\eta(\tau),y)\mid (\eta,\,v,\,l)\in\SP(y)\Big\}. 
\end{align*}
Fix any $\gep>0$ and choose a triple $(\eta,\,v,\,l)\in\SP(y)$ 
so that 
\[
\gep>\Big\{\int_0^\tau \big(L(\eta(s),-v(s))+g(\eta(s))l(s)\big)\d s +
d(\eta(\tau),y).
\]
In view of Proposition \ref{thm:am3}, by modifying $(\eta,\,v,\,l)$ 
on the set $(\tau,\,\infty)$ if necessary, we may assume that for some $t>\tau$, 
\[
d(\eta(\tau),y)+\gep>\int_\tau^t \big(L(\eta(s),\,-v(s))+g(\eta(s))l(s)\big)\d s
\ \ \hb{ and } \ \ \eta(t)=y. 
\]
Thus, we obtain
\[
2\gep>\int_0^t \big(L(\eta(s),\,-v(s))+g(\eta(s))l(s)\big)\d s \ \ \hb{ and } \ \ 
\eta(0)=\eta(t)=y, 
\]
which ensures together with (\ref{eq:am9}) that (\ref{eq:am8}) holds. 

Next we assume that (\ref{eq:am8}) holds and show that $y\in\cA$. 
Let $u$ be the unique solution of problem (\ref{eq:i3})--(\ref{eq:i5}), 
with initial data $d(\cdot,y)$. Since $d(\cdot,y)$, regarded as a function 
on $\ol\gO\tim\ol\R_+$, is a subsolution of 
(\ref{eq:i3}), (\ref{eq:i4}), by comparison, we see that 
$d(x,y)\leq u(x,t)$ for $(x,t)\in\ol\gO\tim[0,\,\infty)$. 
As in Lemma \ref{thm:am5}, we set 
\[
u^-(x)=\liminf_{t\to\infty}u(x,t) \ \ \hb{ for }x\in\ol\gO,  
\]
to find that $u^-\in\Lip(\ol\gO)$ and $u^-$ is a solution of (\ref{eq:am1}).  
It follows that $d(x,y)\leq u^-(x)$ for $x\in\ol\gO$.  
It is easily seen from (\ref{eq:am8}) that for each $k\in\N$,
\[
\begin{split}
\inf\Big\{\int_0^t \big(L(\eta(s),\,v(s))+&\,g(\eta(s))l(s)\big)\d s\mid t>k\tau,\\ 
&(\eta,\,v,\,l)\in\SP,\ \eta(0)=\eta(t)=y\Big\}=0.
\end{split}
\]
On the other hand, we have 
\begin{align*}
\inf_{t>k\tau} u(y,t)\leq 
\inf\Big\{\int_0^t \big(L(\eta(s),\,v(s))+&\,g(\eta(s))l(s)\big)\d s\mid t>k\tau,\\ 
&(\eta,\,v,\,l)\in\SP,\ \eta(0)=\eta(t)=y\Big\}.
\end{align*}
These together ensure that $u^-(y)\leq 0$ and hence $d(x,y)\geq u^-(x)$ for $x\in\ol\gO$. 
Thus we find that $d(x,y)=u^-(x)$ and conclude that $y\in\cA$. 
\end{proof}

\begin{thm}\label{thm:am6}Let $u\in\USC(\ol\gO)$ and $v\in\LSC(\ol\gO)$ be 
respectively a subsolution and a supersolution of {\em(\ref{eq:am1})}. 
Assume that $u(x)\leq v(x)$ for $x\in\cA$. Then $u(x)\leq v(x)$ for $x\in\ol\gO$. 
\end{thm}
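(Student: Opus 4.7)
The plan is to argue by contradiction by combining a finite covering of the maximizer set with the convex-combination stability and the localized comparison of Section~3. Suppose, for contradiction, that $M:=\max_{\ol\gO}(u-v)>0$. Since $u-v\in\USC(\ol\gO)$ and $\ol\gO$ is compact, the set $K:=\{x\in\ol\gO\mid u(x)-v(x)=M\}$ is a nonempty compact subset of $\ol\gO$; the hypothesis $u\leq v$ on $\cA$ forces $K\cap\cA=\emptyset$, so each $y\in K$ lies outside the Aubry set. For each such $y$, Lemma \ref{thm:am2} produces a pair $(\psi_y,f_y)\in\Lip(\ol\gO)\tim C(\ol\gO)$ with $f_y(y)<0$, $f_y\leq 0$ on $\ol\gO$, and $\psi_y$ a subsolution of $H(\cdot,D\psi_y)=f_y$ satisfying $D_\gg\psi_y=g$ on $\gG$.

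I would then use the compactness of $K$ to upgrade this pointwise strict negativity to one uniform on a neighborhood of $K$. For each $y\in K$ the open set $\{f_y<0\}$ is a neighborhood of $y$; finitely many of these, associated with points $y_1,\ldots,y_N\in K$, cover $K$. Averaging, set $\bar\psi:=\tfrac1N\sum_{j=1}^N\psi_{y_j}$ and $\bar f:=\tfrac1N\sum_{j=1}^N f_{y_j}$. By Theorem \ref{thm:b2}, $\bar\psi\in\Lip(\ol\gO)$ is a subsolution of $H(\cdot,D\bar\psi)=\bar f$ with $D_\gg\bar\psi=g$; moreover $\bar f\leq 0$ on $\ol\gO$ and $\bar f\leq-\eta<0$ on an open neighborhood $U\subset\R^n$ of $K$, for some $\eta>0$. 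For $\gep\in(0,1)$ the convex combination $u_\gep:=(1-\gep)u+\gep\bar\psi$ is, by a second application of Theorem \ref{thm:b2}, a subsolution of $H(\cdot,Du_\gep)=\gep\bar f$ with $D_\gg u_\gep=g$, and on $U\cap\ol\gO$ its right-hand side $\gep\bar f\leq-\gep\eta$ is strictly less than the right-hand side $0$ satisfied by $v$.

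The localized comparison Theorem \ref{thm:c1} then applies on $U$ and yields
\[
\sup_{U\cap\ol\gO}(u_\gep-v)\leq\sup_{\pl U\cap\ol\gO}(u_\gep-v).
\]
On the one hand, $K\subset U$ and $K$ is exactly where $u-v=M$, so $\pl U\cap\ol\gO$ is disjoint from $K$; by the upper semicontinuity of $u-v$ and compactness of $\pl U\cap\ol\gO$, it follows that $M':=\sup_{\pl U\cap\ol\gO}(u-v)<M$, and uniform convergence $u_\gep\to u$ on $\ol\gO$ then gives $\sup_{\pl U\cap\ol\gO}(u_\gep-v)\to M'<M$ as $\gep\to 0^+$. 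On the other hand, choosing any $y_*\in K\subset U\cap\ol\gO$ yields $\sup_{U\cap\ol\gO}(u_\gep-v)\geq(u_\gep-v)(y_*)\to M$. For all sufficiently small $\gep>0$ these two estimates violate the displayed inequality, giving the desired contradiction.

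The main obstacle is the transition from the pointwise strict subsolution defect at a single $y\in K$ provided by Lemma \ref{thm:am2} to a single $\bar\psi$ whose defect $\bar f$ is uniformly strictly negative on a full neighborhood of the maximizer set $K$. This uniform strict negativity is what licenses the clean application of Theorem \ref{thm:c1}, and it is obtained via the finite covering of $K$ combined with the convex-combination stability of Theorem \ref{thm:b2}.
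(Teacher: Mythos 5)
Your proof is correct, and it takes a genuinely different route from the paper's. The paper first proves a preliminary Lemma~\ref{thm:am7}, which upgrades the pointwise statement of Lemma~\ref{thm:am2} to a single globally defined pair $(\psi,f)$ with $f<0$ on \emph{all} of $\ol\gO\setminus\cA$ (this uses a countable convex combination, via the infinite-sum case of Theorem~\ref{thm:b2}, together with an equi-Lipschitz normalization to make that sum converge). It then forms $u_\gep=(1-\gep)u+\gep\psi$ and applies Theorem~\ref{thm:c1} once, with $U:=\R^n\setminus\cA$, so that $\pl U\cap\ol\gO\subset\cA$ and the hypothesis $u\leq v$ on $\cA$ controls the boundary supremum directly; letting $\gep\to0^+$ gives $u\leq v$ without any contradiction argument. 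You instead argue by contradiction: you localize to the compact maximizer set $K$ of $u-v$, observe $K\cap\cA=\emptyset$, take a \emph{finite} cover with the $(\psi_y,f_y)$ from Lemma~\ref{thm:am2}, and apply Theorem~\ref{thm:c1} on a bounded neighborhood $U$ of $K$ where the averaged $\bar f$ is uniformly negative. Your route thus bypasses Lemma~\ref{thm:am7} entirely (needing only finite averages in Theorem~\ref{thm:b2}), at the cost of a compactness/contradiction argument; the paper's route is more direct and makes the auxiliary Lemma~\ref{thm:am7} explicitly available for reuse (it is invoked again in the proof of Theorem~\ref{thm:x5}). One small point worth making explicit in your write-up: the uniform convergence $u_\gep\to u$ uses that $u$ and $\bar\psi$ are both bounded, which holds because Lemma~\ref{thm:c3} forces any subsolution of (\ref{eq:am1}) on the compact set $\ol\gO$ to be Lipschitz, hence bounded.
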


\begin{lem}\label{thm:am7}There exist functions $\psi\in\Lip(\ol\gO)$ and 
$f\in C(\ol\gO)$ such that $f(x)\leq 0$ for $x\in\ol\gO$, 
$f(x)<0$ for $x\in\ol\gO\setminus\cA$ and $\psi$ is a subsolution of 
{\em (\ref{eq:b1})}, with $U:=\R^n$.  
\end{lem}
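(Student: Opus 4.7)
The plan is to build $\psi$ as a countable convex combination of the local subsolutions provided by Lemma~\ref{thm:am2}, arranged so that the resulting source term $f$ becomes strictly negative throughout $\ol\gO\setminus\cA$. The main tool is the convex-combination stability result, Theorem~\ref{thm:b2}, applied with $U:=\R^n$; the geometric input is a Lindel\"of reduction of the open cover supplied by Lemma~\ref{thm:am2}.

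First, for each $y\in\ol\gO\setminus\cA$, I invoke Lemma~\ref{thm:am2} to obtain $v_y\in\Lip(\ol\gO)$ and $f_y\in C(\ol\gO)$ such that $v_y$ is a subsolution of (\ref{eq:b1}) with $U:=\R^n$ and $f:=f_y$, $f_y\leq 0$ on $\ol\gO$, and $f_y(y)<0$. By continuity, the set $W_y:=\{x\in\ol\gO\mid f_y(x)<0\}$ is an open neighborhood of $y$ in $\ol\gO$, and the family $\{W_y\}_{y\in\ol\gO\setminus\cA}$ covers $\ol\gO\setminus\cA$. Since $\ol\gO\setminus\cA$ is a subspace of $\R^n$, it is second countable and hence Lindel\"of, so I can extract a countable subcover $\{W_{y_k}\}_{k\in\N}$ of $\ol\gO\setminus\cA$.

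Next I normalize the ingredients so that Theorem~\ref{thm:b2} applies. Replacing $v_{y_k}$ by $v_{y_k}-v_{y_k}(x_0)$ for some fixed $x_0\in\ol\gO$ preserves the subsolution property, and Lemma~\ref{thm:c3} (applied with the uniform bound $\|f_{y_k}\|_\infty\leq 1$ obtained by replacing $f_{y_k}$ with $\max\{f_{y_k},-1\}$, which only weakens the subsolution condition while keeping $f_{y_k}\leq 0$ on $\ol\gO$ and $W_{y_k}\subseteq\{f_{y_k}<0\}$) shows that $\{v_{y_k}\}_{k\in\N}$ is equi-Lipschitz on $\ol\gO$, hence uniformly bounded there. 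Now I fix any sequence of strictly positive weights $\{\gl_k\}_{k\in\N}$ with $\sum_{k\in\N}\gl_k=1$ and set
\[
\psi(x)=\sum_{k\in\N}\gl_k v_{y_k}(x),\qquad f(x)=\sum_{k\in\N}\gl_k f_{y_k}(x) \ \ \hb{ for }x\in\ol\gO.
\]
The uniform bounds on $\{v_{y_k}\}$ and $\{f_{y_k}\}$ make both series uniformly and absolutely convergent, so $\psi\in\Lip(\ol\gO)$ (with the common Lipschitz constant) and $f\in C(\ol\gO)$ with $f\leq 0$ throughout $\ol\gO$.

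Finally, Theorem~\ref{thm:b2} applied with $U:=\R^n$ yields that $\psi$ is a subsolution of (\ref{eq:b1}) with this $f$. To verify the strict inequality, I fix any $x\in\ol\gO\setminus\cA$; by the Lindel\"of subcover there exists $k$ with $x\in W_{y_k}$, hence $f_{y_k}(x)<0$, and since $\gl_k>0$ and all other $f_{y_j}(x)\leq 0$,
\[
f(x)\leq \gl_k f_{y_k}(x)<0.
\]
The only subtle point is ensuring that one can pass from the pointwise strict negativity $f_y(y)<0$ of Lemma~\ref{thm:am2} to strict negativity on all of $\ol\gO\setminus\cA$; this is resolved exactly by the Lindel\"of extraction, which is the one non-routine step.
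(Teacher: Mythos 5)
Your proposal is correct and follows essentially the same route as the paper: the paper likewise takes the data from Lemma \ref{thm:am2}, normalizes $\{\psi_y\}$ and $\{f_y\}$ to be uniformly bounded (equi-Lipschitz via the coercivity), selects a countable family $\{y_j\}$ with $\inf_j f_{y_j}<0$ off $\cA$ (your Lindel\"of extraction makes this selection explicit), and applies Theorem \ref{thm:b2} with $U:=\R^n$ to the convex combination with weights $2^{-j}$.
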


\begin{proof} By Lemma \ref{thm:am2}, for each $y\in\ol\gO\setminus\cA$ there 
are functions $f_y\in C(\ol\gO)$ and $\psi_y\in C(\ol\gO)$ such that 
$f_y(y)<0$, $f_y(x)\leq 0$ for $x\in\ol\gO$ and $\psi_y$ is a subsolution of 
(\ref{eq:b1}), with $U:=\R^n$ and $f:=f_y$. Since 
$\{\psi_y\}_{y\in\ol\gO\setminus\cA}$ is equi-Lipschitz continuous 
on $\ol\gO$, we may assume by adding to $\psi_y$ an appropriate 
constant $C_y\in\R$ if necessary   
that $\{\psi_y\}_{y\in\ol\gO\setminus\cA}$ is uniformly 
bounded on $\ol\gO$. Also, we may assume without any loss of 
generality that 
$\{f_y\}_{y\in\ol\gO\setminus\cA}$ is uniformly bounded on $\ol\gO$.  
We may choose a sequence $\{y_j\}_{j\in\N}\subset \ol\gO\setminus\cA$ 
so that 
\[
\inf_{j\in\N}f_{y_j}(x) <0 \ \ \hb{ for }x\in \ol\gO\setminus \cA. 
\]
Now we set 
\[
\psi(x)=\sum_{j\in\N}2^{-j}\psi_{y_j}(x) \ \ \hb{ for }x\in\ol\gO,
\]
and observe in view of Theorem \ref{thm:b2} that 
$\psi$ is a subsolution of (\ref{eq:b1}), with $U:=\R^n$ 
and $f$ given by 
\[
f(x)=\sum_{j\in\N}2^{-j}f_{y_j}(x) \ \ \hb{ for }x\in\ol\gO. 
\] 
Finally, we note that $f(x)\leq 0$ for $x\in\ol\gO$, 
$f(x)<0$ for $x\in\ol\gO\setminus \cA$ and $\psi\in\Lip(\ol\gO)$. 
The proof is complete. 
\end{proof}

\begin{proof}[Proof of Theorem {\em\ref{thm:am6}}]
Due to Lemma \ref{thm:am7}, there are functions 
$f\in C(\ol\gO)$ and $\psi\in\Lip(\ol\gO)$ such that 
$f(x)\leq 0$ for $x\in\ol\gO$, $f(x)<0$ for $x\in\ol\gO\setminus\cA$ 
and $\psi$ is a subsolution of (\ref{eq:b1}), with $U:=\R^n$. 
Fix any $0<\gep<1$ and set 
\[
u_\gep(x)=(1-\gep)u(x)+\gep \psi(x) \ \ \hb{ for }x\in\ol\gO. 
\]
Then the function $u_\gep$ is a subsolution 
of (\ref{eq:b1}), with $U:=\R^n$ and $f$ 
replaced by $\gep f$. 
We apply Theorem \ref{thm:c1}, with $U:=\R^n\setminus\cA$, 
to obtain $u_\gep\leq v$ on $\ol\gO$, which implies that 
$u\leq v$ on $\ol\gO$. 
\end{proof}

\begin{thm}\label{thm:am7+} 
Let $u\in C(\ol\gO)$ be a solution of {\em(\ref{eq:am1})}. Then  
\beeq\label{eq:am10}
u(x)=\min\{u(y)+d(x,y)\mid y\in\cA\} \ \ \hb{ for }x\in\ol\gO.
\eneq
\end{thm}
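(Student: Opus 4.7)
The plan is to prove the two inequalities in (\ref{eq:am10}) separately, assembling results already established. The easy direction uses only the definition of $d$; the reverse direction uses the comparison principle on the Aubry-Mather set (Theorem \ref{thm:am6}) together with the fact that the infimum of solutions is a solution.

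First, I would observe that the infimum in (\ref{eq:am10}) is actually attained, so the formula makes sense as stated. Indeed, $\cA$ is compact by Theorem \ref{thm:am1}, $u$ is continuous by hypothesis, and $d(x,\cdot)$ is Lipschitz on $\ol\gO$ (apply Lemma \ref{thm:c3} with $U:=\R^n$ to the subsolutions $v$ used in defining $d(x,y)$, or observe that $d(x,\cdot)=-d(\cdot,x)^*$ up to Lipschitz bookkeeping). For the $\leq$ inequality, since $u$ is in particular a subsolution of (\ref{eq:am1}), the definition (\ref{eq:def-d}) of $d$ immediately gives $u(x)-u(y)\leq d(x,y)$ for all $x,y\in\ol\gO$, and restricting $y$ to $\cA$ yields $u(x)\leq \min\{u(y)+d(x,y)\mid y\in\cA\}$.

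For the reverse inequality, I would introduce
\[
w(x):=\min\{u(y)+d(x,y)\mid y\in\cA\} \quad\hb{for }x\in\ol\gO
\]
and show that $w$ is itself a solution of (\ref{eq:am1}). For each fixed $y\in\cA$, the function $d(\cdot,y)$ is by definition of $\cA$ a solution of (\ref{eq:am1}), hence so is $x\mapsto u(y)+d(x,y)$. The family $\{u(y)+d(\cdot,y)\}_{y\in\cA}$ is uniformly bounded (since $u$ is bounded on the compact set $\cA$ and $\{d(\cdot,y)\}_{y\in\ol\gO}$ is equi-Lipschitz and equi-bounded on $\ol\gO$ modulo an additive normalization). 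Therefore Theorem \ref{thm:b7} together with the remark after it, applied with $U:=\R^n$, guarantees that $w$ is a solution of (\ref{eq:am1}).

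Finally I would check that $u=w$ on $\cA$ and conclude via Theorem \ref{thm:am6}. Directly from (\ref{eq:def-d}), $d(x,x)=\sup\{v(x)-v(x)\mid v\text{ subsolution}\}=0$ for every $x\in\ol\gO$, so for $x\in\cA$ we get $w(x)\leq u(x)+d(x,x)=u(x)$, and the reverse inequality $u(x)\leq w(x)$ was shown in the first step. Hence $u=w$ on $\cA$. Since both $u$ and $w$ are continuous solutions of (\ref{eq:am1}) that coincide on $\cA$, applying Theorem \ref{thm:am6} first with $u$ as subsolution and $w$ as supersolution, then with the roles reversed, yields $u\leq w$ and $w\leq u$ throughout $\ol\gO$, so $u=w$ and (\ref{eq:am10}) holds. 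No single step is a genuine obstacle here; the main content lives in the earlier construction of $\cA$ and in Theorem \ref{thm:am6}, and this proof is essentially a packaging of those facts.
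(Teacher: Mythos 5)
Your proof is correct and follows essentially the same route as the paper: set $w(x)=\min\{u(y)+d(x,y)\mid y\in\cA\}$, note $w$ is a solution via the remark after Theorem \ref{thm:b7}, get $u\le w$ from the definition of $d$, get $w\le u$ on $\cA$ from $d(y,y)=0$, and conclude with Theorem \ref{thm:am6}. The only (harmless) deviation is that you invoke Theorem \ref{thm:am6} twice, whereas one application suffices since $u\le w$ already holds globally.
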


\begin{proof} We denote by $w(x)$ the right hand side of (\ref{eq:am10}). 
We note first by the remark after Theorem \ref{thm:b7} that 
$w$ is a solution of (\ref{eq:am1}).  
Next, by the definition of $d$, we have 
$u(x)-u(y)\leq d(x,y)$ for $x,y\in\ol\gO$. Hence we get 
$u(x)\leq w(x)$ for $x\in\ol\gO$. Also, by the definition of $w$, we have 
$w(x)\leq u(x)$ for $x\in\cA$. Thus we have $u(x)=w(x)$ for $x\in\cA$. By Theorem 
\ref{thm:am6}, we conclude that $u=w$ on $\ol\gO$.  
\end{proof} 

\begin{cor}If $u\in C(\ol\gO)$ is a solution of {\em(\ref{eq:am1})}, then    
\begin{align*}
u(x)=\inf\Big\{\int_0^t \big(L(\eta(s),-v(s))+&\,g(\eta(s))l(s)\big)\d s+u(\eta(t))\mid t>0, \\
&\,(\eta,\,v,\,l)\in\SP(x),\ \eta(t)\in\cA
\Big\} \ \hb{ for }x\in\ol\gO. 
\end{align*}
\end{cor}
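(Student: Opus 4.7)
Denote by $w(x)$ the infimum on the right-hand side.  The plan is simply to chain together two representation results already established: Theorem~\ref{thm:am7+}, which expresses $u(x)$ as $\min\{u(y)+d(x,y)\mid y\in\cA\}$, and Proposition~\ref{thm:am3}, which gives the variational formula for the intrinsic ``distance'' $d(x,y)$ in terms of Skorokhod trajectories.  The claim is then an immediate two-sided inequality.

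For the inequality $u(x)\leq w(x)$, I would take an arbitrary admissible triple $(\eta,v,l)\in\SP(x)$ with $\eta(t)\in\cA$ for some $t>0$.  Since $u$ is itself a subsolution of (\ref{eq:am1}), the definition (\ref{eq:def-d}) of $d$ gives
\[
u(x)-u(\eta(t))\;\leq\; d(x,\eta(t)),
\]
while Proposition~\ref{thm:am3} yields
\[
d(x,\eta(t))\;\leq\;\int_0^t\bigl(L(\eta(s),-v(s))+g(\eta(s))l(s)\bigr)\d s.
\]
Adding $u(\eta(t))$ to both sides and taking the infimum over all such competitors produces $u(x)\leq w(x)$.

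For the reverse inequality, Theorem~\ref{thm:am7+} provides $y\in\cA$ attaining the minimum, so $u(x)=u(y)+d(x,y)$.  Given $\gep>0$, Proposition~\ref{thm:am3} furnishes some $t>0$ and $(\eta,v,l)\in\SP(x)$ with $\eta(t)=y$ and
\[
\int_0^t\bigl(L(\eta(s),-v(s))+g(\eta(s))l(s)\bigr)\d s\;<\;d(x,y)+\gep.
\]
Since $\eta(t)=y\in\cA$, this triple is admissible in the infimum defining $w(x)$, whence $w(x)<u(x)+\gep$.  Letting $\gep\to 0+$ gives $w(x)\leq u(x)$.

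Since the argument is a direct assembly of the two preceding results, no substantive obstacle is expected.  The only point deserving a brief verification is that the competitor constructed in the reverse inequality indeed terminates at a point of $\cA$, but this is guaranteed by the choice $y\in\cA$ coming from Theorem~\ref{thm:am7+}.
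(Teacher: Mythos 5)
Your proposal is correct and takes essentially the same route as the paper: the paper's proof of this corollary consists of the single sentence that Theorem~\ref{thm:am7+} and Proposition~\ref{thm:am3} yield the assertion, and your two-sided argument is exactly the intended assembly of those two results. The only small thing you might note explicitly is that for the easy inequality you only use $u(x)-u(\eta(t))\le d(x,\eta(t))$, which is valid because $u$, being a solution, is in particular a subsolution of (\ref{eq:am1}) and hence a competitor in the supremum defining $d$.
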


Theorem \ref{thm:am7+} and Proposition \ref{thm:am3} yield 
the above assertion.

\section{Calibrated extremals}

As in the previous section,
we assume throughout this section 
that the critical value $c$ is equal to zero.

\begin{lem}\label{thm:x2}
Let $0<T<\infty$ and $\{(\eta_k,\,v_k,\,l_k)\}_{k\in\N}\subset\SP$.  
Assume that there is a constant $C>0$, independent of $k\in\N$, such that  
\[
\int_0^T \big(L(\eta_k(s),-v_k(s))+g(\eta_k(s))l_k(s)\big)\d s\leq C \ \ \hb{ for }k\in\N.
\] 
Then there exists a triple $(\eta,\,v,\,l)\in\SP$ such that  
\begin{align*}
&\int_0^T\big(L(\eta(s),-v(s))+g(\eta(s))l(s)\big)\d s \\
&\leq \liminf_{k\to\infty}\int_0^T\big(L(\eta_k(s),-v_k(s))+g(\eta_k(s))l_k(s)\big)\d s
\end{align*}
Moreover, for the triple $(\eta,\,v,\,l)$, there is a subsequence $\{(\eta_{k_j},\,v_{k_j},\,l_{k_j})\}$ 
of $\{(\eta_k,\,v_k,\,l_k)\}$ such that as $j\to\infty$,
\begin{align}
& \eta_{k_j}(0)\to \eta(0),\label{eq:x5}\\
& \dot\eta_{k_j}(t)\d t\to \dot\eta(t)\d t \ \ \hb{ weakly-star in }C([0,\,T],\,\R^n)^*,\label{eq:x6}\\
& v_{k_j}(t)\d t\to v(t)\d t \ \ \hb{ weakly-star in }C([0,\,T],\,\R^n)^*,\label{eq:x7}\\
& l_{k_j}(t)\d t \, \to \, l(t)\d t \ \ \hb{ weakly-star in }C([0,\,T])^*. \label{eq:x8}
\end{align}
\end{lem}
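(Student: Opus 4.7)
The plan is to apply Proposition \ref{thm:s3} once uniform integrability of $\{|v_k|\}$ is ensured, and then to verify lower semicontinuity of the action along the extracted limit triple. The crucial structural input is that the convexity and coercivity of $H$ together with the compactness of $\ol\gO$ make $L(x,\xi)=\sup_{p\in\R^n}(\xi\cdot p-H(x,p))$ uniformly superlinear in $\xi$: for each $M>0$ there exists a constant $A_M\ge 0$ with $L(x,\xi)\ge M|\xi|-A_M$ for all $(x,\xi)\in\ol\gO\tim\R^n$. Combined with $l_k\le C_1|v_k|$ from Proposition \ref{thm:s2}, the hypothesized action bound yields
\[
(M-C_1\|g\|_\infty)\int_0^T|v_k(s)|\d s\le C+A_M T,
\]
so choosing $M$ large enough produces a uniform $L^1(0,T)$ bound on $\{v_k\}$ and, via $L\ge -C_0$, a uniform upper bound on $\int_0^T L(\eta_k,-v_k)\d s$. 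A standard de la Vall\'ee-Poussin argument, using $M'|v_k|\le L(\eta_k,-v_k)+A_{M'}$ with $M'$ large and then restricting to measurable sets of small measure, upgrades the $L^1$ bound to uniform integrability of $\{|v_k|\}$ on $[0,T]$.

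I then apply Proposition \ref{thm:s3} to extract a subsequence and a triple $(\eta,v,l)\in\SP$ satisfying (\ref{eq:x6})--(\ref{eq:x8}); relation (\ref{eq:x5}) is immediate from the uniform convergence of $\eta_{k_j}$. Because $\{|v_k|\}$, and hence by Proposition \ref{thm:s2} also $\{l_k\}$, is uniformly integrable, the Dunford--Pettis theorem upgrades the weak-star convergences in (\ref{eq:x7}), (\ref{eq:x8}) to weak $L^1(0,T)$ convergences, which is the form I will use below.

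For lower semicontinuity of the action, the boundary contribution is in fact continuous along the subsequence: $\eta_{k_j}\to\eta$ uniformly forces $g(\eta_{k_j})\to g(\eta)$ uniformly, so splitting $g(\eta_{k_j})l_{k_j}=g(\eta)l_{k_j}+(g(\eta_{k_j})-g(\eta))l_{k_j}$ and using the weak $L^1$-convergence of $l_{k_j}$ against the uniform $L^1$-bound yields $\int_0^T g(\eta_{k_j})l_{k_j}\d s\to\int_0^T g(\eta)l\d s$. For the interior term I invoke the classical lower semicontinuity theorem for convex integrands of Ioffe--Olech type: $L$ is Borel, lower semicontinuous on $\ol\gO\tim\R^n$, convex in $\xi$, and bounded below by $-C_0$, while $\eta_{k_j}\to\eta$ almost everywhere and $v_{k_j}\to v$ weakly in $L^1$, so
\[
\int_0^T L(\eta(s),-v(s))\d s\le\liminf_{j\to\infty}\int_0^T L(\eta_{k_j}(s),-v_{k_j}(s))\d s.
\]
Adding the two relations gives the desired inequality. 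The main obstacle is precisely this last step, in which the state $\eta_k$ varies simultaneously with the only weakly convergent control $v_k$; should a direct appeal to Ioffe's theorem be undesirable, one can approximate $L$ from below by the continuous, jointly uniformly continuous envelopes $L_i(x,\xi)=\max_{|p|\le i}(\xi\cdot p-H(x,p))$ (already used in Lemma \ref{thm:cn4}), apply Mazur's lemma to obtain convex combinations of $v_{k_j}$ that converge a.e.\ to $v$, exploit convexity of $L_i$ in $\xi$ and uniform continuity in $x$, and finally let $i\to\infty$ by monotone convergence.
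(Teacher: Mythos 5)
Your proposal is correct in substance but takes a genuinely different route from the paper for the crucial weak lower semicontinuity step. The two arguments agree on the preliminaries: the superlinearity of $L$ together with the bound $l_k\leq C_0|v_k|$ from Proposition \ref{thm:s2} forces a lower bound $L(\eta_k,-v_k)+g(\eta_k)l_k\geq -C_1$ and, by pushing the superlinearity constant, uniform integrability of $\{|v_k|\}$ on $[0,T]$; both then extract the limit triple via Proposition \ref{thm:s3}, and the treatment of the boundary term $\int g(\eta_{k_j})l_{k_j}$ by splitting off $(g(\eta_{k_j})-g(\eta))l_{k_j}$ and using the uniform $L^1$ bound on $l_{k_j}$ is essentially what the paper does implicitly when it passes to the limit in its weak-star pairing. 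Where you diverge is on the running-cost term: you upgrade the weak-star (measure) convergences to weak $L^1$ by Dunford--Pettis and then invoke the Ioffe--Olech lower semicontinuity theorem for a normal convex integrand with state converging a.e.\ and control converging weakly in $L^1$; by contrast, the paper avoids any black-box appeal of this kind by constructing, via the Kuratowski--Ryll-Nardzewski selection theorem, a Borel function $P_m(x,\xi)$ attaining the maximum defining the truncated Lagrangian $L_m$, mollifying $p(t)=P_m(\eta(t),-v(t))$ to a continuous $p_\gep$, pairing the weak-star convergences against $p_\gep$, and then passing successively $\gep\to 0$ and $m\to\infty$ (monotone convergence). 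Your fallback route through Mazur's lemma and the truncations $L_i$ is in fact closer to the mechanism the paper already used in Lemma \ref{thm:cn4}, and works here too once one first freezes the state to $\eta$ via the uniform (in $\xi$) modulus $|L_i(x,\xi)-L_i(y,\xi)|\leq\go_{H,i}(|x-y|)$. The trade-off is transparency: the paper's argument is more self-contained and exhibits the near-optimal dual variable $p$ explicitly, at the price of a measurable-selection step; your Ioffe--Olech route is shorter but relies on a classical theorem the paper chooses not to cite. Two small points you should make explicit: before applying Proposition \ref{thm:s3} you should first pass to a subsequence realizing the $\liminf$, so that the lower semicontinuity inequality you obtain along the extracted subsequence indeed bounds the $\liminf$ over the full sequence; and since Proposition \ref{thm:s3} requires uniform integrability on every finite interval, you should extend the data trivially beyond $T$ (as the paper does) or note that a restricted version of that proposition on $[0,T]$ suffices.
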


Of course, under the hypotheses of the above theorem, the functions 
\[
\eta_{k_j}(t)=\eta_{k_j}(0)+\int_0^t\dot\eta_{k_j}(s)\d s 
\]
converge to $\eta(t)$ uniformly on $[0,\,T]$ as $j\to\infty$. 

\begin{proof} We may assume without loss of generality that 
$\eta_k(t)=\eta_k(T)$, $v_k(t)=0$ and $l_k(t)=0$ for $t\geq T$ and $k\in\N$. 

According to Proposition \ref{thm:s2}, there is a constant $C_0>0$ such that for 
$(\eta,\,v,\,l)\in\SP$, 
\[
|\dot\eta(t)|\vee |l(t)|\leq C_0|v(t)| \ \ \hb{ for a.e. }t\geq 0.  
\] 
Note that for each $A>0$ there is a constant $C_A>0$ such that
\[
L(x,\xi)\geq A|\xi|-C_A \ \ \hb{ for }(x,\xi)\in\ol\gO\tim\R^n. 
\]
From this lower bound of $L$, it is obvious that 
for $(x,\xi,r)\in\ol\gO\tim\R^n\tim\ol\R_+$, if $r \leq C_0|\xi|$, then 
\begin{equation}\label{eq:x2}
L(x,\,\xi)+g(x)r\geq A|\xi|-C_A-C_0\|g\|_\infty|\xi|,  
\end{equation}
which ensures that there is a constant $C_1>0$ such that for   
$(\eta,\,v,\,l)\in\SP$, 
\begin{equation}\label{eq:x3}
L(\eta(s),-v(s))+g(\eta(s))l(s)+C_1\geq 0 \ \ \hb{ for a.e. }s\geq 0.  
\end{equation}

Using (\ref{eq:x3}), we obtain for any measurable $B\subset [0,\,T]$, 
\begin{align*}
&\int_B\big(L(\eta_k(s),-v_k(s))+g(\eta_k(s))l_k(s)+C_1\big)\d s \\ 
&\leq \int_0^T\big(L(\eta_k(s),-v_k(s))+g(\eta_k(s))l_k(s)+C_1\big)\d s\leq C+C_1T. 
\end{align*}
This together with (\ref{eq:x2}), yields 
\begin{equation}\label{eq:x4}
\left(A-C_0\|g\|_\infty\right)\int_B |v_k(s)|\d s
\leq C_A |B|+C+C_1T \ \ \hb{ for }A>0.
\end{equation}
This shows that the sequence $\{|v_k|\}$ is uniformly integrable on $\R_+$.

We choose an increasing sequence $\{k_j\}\subset\N$ so that 
\begin{align*}
&\liminf_{k\to\infty}\int_0^T \big(L(\eta_k(s),-v_k(s))+g(\eta_k(s))l_k(s)\big)\d s\\
&=\lim_{j\to\infty}\int_0^T \big(L(\eta_{k_j}(s),-v_{k_j}(s))
+g(\eta_{k_j}(s))l_{k_j}(s)\big)\d s. 
\end{align*}
Thanks to estimate (\ref{eq:x4}), in view of Proposition \ref{thm:s3}, 
we may assume by replacing $\{k_j\}$ by a subsequence if needed 
that there is a triple $(\eta,\,v,\,l)\in\SP$ such that 
the convergences (\ref{eq:x5})--(\ref{eq:x8}) hold.   
Here we may assume that $(\eta(t),\,v(t),\,l(t))=(\eta(T),\,0,\,0)$ 
for $t\geq T$. 
 
In what follows, we write 
$(\eta_j,\,v_j,\,l_j)$ for $(\eta_{k_j},v_{k_j},l_{k_j})$ 
for notational simplicity. 
It remains to show that 
\begin{align*}
&\int_0^T \big(L(\eta(s),-v(s))
+g(\eta(s))l(s)\big)\d s \\
&\leq 
\lim_{j\to\infty}\int_0^T \big(L(\eta_j(s),-v_j(s))+g(\eta_j(s))l_j(s)\big)\d s. 
\end{align*}
In view of the monotone convergence theorem, we need to show that for each $m\in\N$,
\begin{align*}
&\int_0^T \big(L_m(\eta(s),-v(s))
+g(\eta(s))l(s)\big)\d s \\
&\leq 
\lim_{j\to\infty}\int_0^T \big(L(\eta_j(s),-v_j(s))+g(\eta_j(s))l_j(s)\big)\d s, 
\end{align*}
where 
\[
L_m(x,\,\xi):=\max_{p\in B(0,\,m)}\big(\xi\cdot p-H(x,\,p)\big) \ \ \hb{ for }(x,\,\xi)\in\ol\gO\tim\R^n.
\]
Note that $L_m(x,\,\xi)\leq L_{m+1}(x,\,\xi)$ for $(x,\xi)\in\ol\gO\tim\R^n$ and $m\in\N$, 
$\lim_{m\to\infty}L_m(x,\,\xi)=L(x,\,\xi)$ for $(x,\xi)\in\ol\gO\tim\R^n$ 
and the functions $L_m$ are uniformly continuous on bounded subsets of $\ol\gO\tim\R^n$. 

We fix any $m\in\N$. In view of the selection thorem of Kuratowski and Ryll-Nardzewski, 
we may choose a Borel function $P_m: 
\ol\gO\tim\R^n\to B(0,\,m)$, so that 
\begin{equation}\label{eq:x9}
L_m(x,\,\xi)=\xi\cdot P_m(x,\,\xi)-H(x,\,P_m(x,\,\xi))
 \ \ \hb{ for }(x,\xi)\in\ol\gO\tim\R^n.
\end{equation}
Indeed, if we define the multifunction: $\ol\gO\tim\R^n\to 2^{\R^n}$ 
by 
\[
F(x,\,\xi)=\{p\in B(0,\,m)\mid L_m(x,\,\xi)=\xi\cdot p-H(x,p)\},            
\]
then (i) $F(x,\xi)$ is a nonempty closed set for every $(x,\,\xi)\in\ol\gO\tim \R^n$ 
and (ii) $F^{-1}(K)$ is a closed set whenever $K\subset\R^n$ is closed. 
From (ii), we see easily that $F^{-1}(U)$ is a $F_\gs$-set (and hence a Borel set) 
whenever $U\subset\R^n$ is open. Hence, as claimed above, 
by the thorem of Kuratowski and Ryll-Nardzewski (see, for instance, \cite[Theorem 1.5]{JayneRogers_02}), there exists a function: 
$P_m:\ol\gO\tim\R^n\to \R^n$ such that $P_m(x,\,\xi)\in F(x,\,\xi)$ for all 
$(x,\,\xi)\in \ol\gO\tim\R^n$. 

Set 
\[
p(t)=P_m(\eta(t),\,-v(t)) \ \ \hb{ for }t\geq 0.
\]
Let $\rho_\gep$, with $\gep>0$, be a mollification kernel on $\R$ whose support is contained  
in $[-\gep,\,0]$ and set $p_\gep(t)=\rho_\gep* p(t)$ for $t\geq 0$.   

We fix any $\gep>0$, and observe by the definition of $L$ that
\begin{align*}
I:=&\int_0^T\big(L(\eta_j(s),-v_j(s))+g(\eta_j(s))l_j(s)\big)\d s\\
&\geq \int_0^T\big(-v_j(s)\cdot p_\gep(s) -H(\eta_j(s),\,p_\gep(s))
+ g(\eta_j(s))l_j(s)\big)\d s. 
\end{align*}
From this, in view of (\ref{eq:x5})--(\ref{eq:x8}), we find that
\begin{equation}
\label{eq:x10}\\ 
I\geq \int_0^T\big(-v(s)\cdot p_\gep(s) -H(\eta(s),\,p_\gep(s))
+ g(\eta(s))l(s)\big)\d s.
\end{equation}
Note here that $|p_\gep(s)|\leq m$ for $s\geq 0$ and $p_\gep\to p$ in $L^1(0,\,T)$ as 
$\gep\to 0$. In particular, for some sequence $\gep_k\to +0$, we have 
$p_{\gep_k}(t) \to p(t)$ for a.e. $t\in[0,\,T]$ as $k\to\infty$. 
Sending $\gep
\to 0$ along the sequence $\gep=\gep_k$ and using (\ref{eq:x9}), from (\ref{eq:x10}) we obtain 
\begin{align*}
I\geq&\, \int_0^T \big(-v(s)\cdot p(s)-H(\eta(s)+g(\eta(s))l(s)\big)\d s\\
\geq&\, \int_0^T \big(L_m(\eta(s),\,-v(s))+g(\eta(s))l(s)\big)\d s,
\end{align*}
which completes the proof. 
\end{proof}

\begin{thm}\label{thm:x3} Let $u_0\in C(\ol\gO)$ 
and let $u\in C(\ol\gO\tim\ol\R_+)$ be the unique solution of 
{\em(\ref{eq:i3})--(\ref{eq:i5})}, with $a:=0$. Let $(x,t)\in\ol\gO\tim\R_+$. 
Then there exists 
a triple $(\eta,\,v,\,l)\in\SP(x)$ such that 
\[
u(x,t)=\int_0^t \big(L(\eta(s),\,-v(s))+g(\eta(s))l(s)\big)\d s+u_0(\eta(t)). 
\]
If, in addition, $u\in\Lip(\ol\gO\tim (\ga,\,\gb))$, with $0\leq\ga<\gb\leq\infty$, 
then the triple $(\eta,\,v,\,l)$, restricted to $(\ga,\,\gb)$, belongs to 
$\Lip(\ga,\,\gb)\tim L^\infty(\ga,\,\gb)\tim L^\infty(\ga,\,\gb)$.  
\end{thm}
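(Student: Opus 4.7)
The plan is to produce the minimizer as the limit of a minimizing sequence via the compactness and lower semicontinuity package of Lemma \ref{thm:x2}, and then to extract Lipschitz regularity along the minimizer from the dynamic programming principle combined with the superlinearity of $L$.

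For the first assertion, I would fix $(x,t)\in\ol\gO\tim\R_+$ and choose a minimizing sequence $\{(\eta_k,v_k,l_k)\}\subset\SP(x)$ for the expression on the right side of (\ref{eq:cn1}) defining $u(x,t)$. Writing $I_k=\int_0^t(L(\eta_k,-v_k)+g(\eta_k)l_k)\d s$, the convergence $I_k+u_0(\eta_k(t))\to u(x,t)$, the superlinear lower bound $L(x,\xi)\ge A|\xi|-M_A$ (valid for every $A>0$ by testing the definition of $L$ against vectors $p$ of norm $A$, using that $H$ is continuous and coercive), and the estimate $|\dot\eta_k|\vee l_k\le C|v_k|$ from Proposition \ref{thm:s2} together yield a uniform bound on $\int_0^t|v_k|\d s$; this keeps $\eta_k(t)$ in a compact subset of $\ol\gO$, so that $u_0(\eta_k(t))$ stays bounded and the actions $I_k$ are uniformly bounded. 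Lemma \ref{thm:x2} then furnishes a triple $(\eta,v,l)\in\SP$ and a subsequence converging in the senses (\ref{eq:x5})--(\ref{eq:x8}); in particular $\eta(0)=x$ and $\eta_k(t)\to\eta(t)$. The lower semicontinuity statement in Lemma \ref{thm:x2} combined with continuity of $u_0$ then gives
\[
\int_0^t(L(\eta,-v)+g(\eta)l)\d s+u_0(\eta(t))\le\liminf_k\bigl(I_k+u_0(\eta_k(t))\bigr)=u(x,t),
\]
and the reverse inequality is the definition of $u(x,t)$, so $(\eta,v,l)\in\SP(x)$ realizes the infimum.

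For the regularity statement, the key identity is that along a minimizer the dynamic programming inequality becomes an equality: for $0\le s_1<s_2\le t$,
\[
u(\eta(s_1),t-s_1)-u(\eta(s_2),t-s_2)=\int_{s_1}^{s_2}(L(\eta,-v)+g(\eta)l)\d s.
\]
This follows by applying the dynamic programming principle at times $s_1$ and $s_2$ and subtracting, using that the restrictions of $(\eta,v,l)$ to $[s_j,\infty)$, shifted to start at time $0$, lie in $\SP(\eta(s_j))$. If $u$ is Lipschitz with constant $K$ on the time slab corresponding to $(\ga,\gb)$ and $s_1,s_2$ lie in the corresponding range, Proposition \ref{thm:s2} yields
\[
\int_{s_1}^{s_2}(L(\eta,-v)+g(\eta)l)\d s\le KC\int_{s_1}^{s_2}|v|\d s+K(s_2-s_1).
\]
Both sides being integrals of $L^1$ functions, Lebesgue differentiation converts this into the a.e.\ pointwise bound
\[
L(\eta(s),-v(s))+g(\eta(s))l(s)\le KC|v(s)|+K.
\]
Substituting the superlinear lower bound on $L$ together with $|g(\eta)l|\le C\|g\|_\infty|v|$ yields $(A-C\|g\|_\infty-KC)|v(s)|\le K+M_A$; choosing $A>1+C\|g\|_\infty+KC$ produces an $L^\infty$ bound on $v$ on the relevant range, whence Proposition \ref{thm:s2} supplies $L^\infty$ bounds on $\dot\eta$ and $l$ and hence the Lipschitz bound on $\eta$.

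The main technical hurdle is transferring the integrated Lipschitz estimate to an a.e.\ pointwise bound and then isolating the superlinearity of $L$ to extract an $L^\infty$ bound on $v$; the rest amounts to a routine application of the compactness and lower semicontinuity apparatus in Lemma \ref{thm:x2} together with the dynamic programming equality along minimizers.
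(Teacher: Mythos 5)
Your proposal is correct and follows essentially the same route as the paper: a minimizing sequence combined with the compactness and lower semicontinuity of Lemma \ref{thm:x2} to produce the optimal triple, and then the dynamic programming equality along that minimizer together with the superlinear lower bound on $L$ and Proposition \ref{thm:s2} to extract the $L^\infty$ bound on $v$ on the Lipschitz slab. The only cosmetic difference is that you invoke Lebesgue differentiation to pass from the interval inequality to the a.e.\ pointwise bound, whereas the paper notes directly that arbitrariness of the subinterval $[a,b]\subset(\ga,\gb)$ forces the integrand to be nonpositive a.e.\ --- the same fact.
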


Here we should note that the infimum on the right hand side of formula (\ref{eq:cn1}) 
is always attained, which is 
a consequence of the above theorem and Theorem \ref{thm:cn1}.

\begin{proof}Fix $(x,t)\in\ol\gO$. By Theorem \ref{thm:cn1}, we can choose a 
sequence $\{(\eta_k,\,v_k,\,l_k)\}\subset\SP(x)$ such that 
\[
u(x,t)=\lim_{k\to\infty}\Big\{\int_0^t\big(L(\eta_k(s),-v_k(s))+g(\eta_k(s))l_k(s)\big)\d s 
+u_0(\eta_k(t))\Big\}. 
\]
By virtue of Lemma \ref{thm:x2}, there are an increasing sequence 
$\{k_j\}\subset\N$ and a $(\eta,\,v,\,l)\in\SP$ such that  
$\eta_{k_j}(s)\to \eta(s)$ uniformly on $[0,\,t]$ and 
\begin{align*}
&\int_0^t\big(L(\eta(s),-v(s))+g(\eta(s))l(s)\big)\d s \\
&\leq \liminf_{k\to\infty}\int_0^t\big(L(\eta_k(s),-v_k(s))+g(\eta_k(s))l_k(s)\big)\d s. 
\end{align*}
Now it is easy to see that 
\[
u(x,t)\geq \int_0^t\big(L(\eta(s),-v(s))+g(\eta(s))l(s)\big)\d s+u_0(\eta(t)),
\]
but we have already the opposite inequality by Theorem \ref{thm:cn1}.   

Now, assume in addition that $u\in\Lip(\ol\gO\tim (\ga,\,\gb))$, where $0\leq\ga<\gb\leq\infty$. 
Let $C>0$ be a Lipschitz 
constant of the fucntion $u$ on the set $\ol\gO\tim[\ga,\,gb]$.  
Let $C_0>0$ be the constant from Propostion \ref{thm:s2}, so that 
$|\dot\eta(s)|\vee l(s)\leq C_0|v(s)|$ for a.e. $s\geq 0$. 
As in the proof of Proposition \ref{thm:s3}, for each $A>0$ 
we choose a constant $C_A>0$ so that 
$L(y,\xi)\geq A|\xi|-C_A$ for $(y,\,\xi)\in\ol\gO\tim\R^n$. 
Fix any finite interval $[a,\,b]\subset (\ga,\,\gb)$. 
Then, with help of the dynamic programming principle,  
we get 
\begin{align*}
&\int_a^b \big(L(\eta(s),-v(s))+g(\eta(s))l(s)\big)\d s  
=u(\eta(b),\,b)-u(\eta(a),\,a)\\
&\leq C(|\eta(b)-\eta(a)|+|b-a|)+C_1(b-a)\leq \int_a^b\big(C|\dot\eta(s)|+C+C_1\big)\d s\\ 
&\,\leq \int_a^b\big(CC_0|v(s)|+C+C_1\big)\d s. 
\end{align*}
On the other hand, for any $A>0$, we have 
\[
\int_a^b \big(L(\eta(s),-v(s))+g(\eta(s))l(s)\big)\d s
\geq \int_a^b\big((A-C_0\|g\|_\infty)|v(s)|-C_A  \big)\d s. 
\]
Combining these, we obtain 
\[
\int_a^b\big((A-C_0\|g\|_\infty-CC_0)|v(s)|-C_A-C-C_1\big)\d s\leq 0. 
\]
We fix $A>0$ so that $A\geq C_0\|g\|_\infty+CC_0+1$ and get 
\[
\int_a^b\big(|v(s)|-C_A-C-C_1\big)\d s\leq 0. 
\]
Since $a,\,b$ are arbitrary as far as $\ga<a<b<\gb$, we conclude from the above that 
$|v(s)|\leq C_A+C+C_1$ for a.e. $s\in(\ga,\,\gb)$. By Proposition \ref{thm:s2}, 
we see that $(\eta,\,v,\,l)\in \Lip(\ga,\,\gb)
\tim L^\infty(\ga,\,\gb)\tim L^\infty(\ga,\,\gb)$.  
\end{proof}

\begin{thm}\label{thm:x4}
Let $\phi\in\Lip(\ol\gO)$ be a solution of 
{\em(\ref{eq:i1}), (\ref{eq:i2})}, with $a:=0$. Let $x\in\ol\gO$. 
Then there is a triple $(\eta,\,v,\,l)\in\SP(x)$ such that for any $t>0$,
\begin{equation}\label{eq:xx}
\phi(x)-\phi(\eta(t))=\int_0^t \big(L(\eta(s),-v(s))+g(\eta(s))l(s)\big)\d s.  
\end{equation}
Moreover, $(\eta,\,v,\,l)\in \Lip(\ol\R_+)\tim L^\infty(\R_+)\tim L^\infty(\R_+)$. 
\end{thm}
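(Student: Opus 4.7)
The plan is to construct $(\eta,v,l)$ as a suitable limit of finite-horizon calibrating triples produced by Theorem~\ref{thm:x3}. The key starting observation is that the function $u(y,t):=\phi(y)$ on $\ol\gO\tim\ol\R_+$ is Lipschitz and a solution of (\ref{eq:i3})--(\ref{eq:i5}) with $a:=0$ and $u_0:=\phi$, because $\phi$ itself solves (\ref{eq:i1}), (\ref{eq:i2}). By the uniqueness statement from Theorem~\ref{thm:c4} (applied with $U:=\R^n$), it is the unique solution, and so coincides with the value function from Theorem~\ref{thm:cn1} with initial data $\phi$.

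For each $n\in\N$, Theorem~\ref{thm:x3} applied at time $n$ supplies a triple $(\eta_n,v_n,l_n)\in\SP(x)$ with
\[
\phi(x)=u(x,n)=\int_0^n\bigl(L(\eta_n(s),-v_n(s))+g(\eta_n(s))l_n(s)\bigr)\d s+\phi(\eta_n(n)).
\]
Since $u\in\Lip(\ol\gO\tim\R_+)$ with constant equal to $\Lip(\phi)$, the second part of Theorem~\ref{thm:x3} provides a constant $M>0$, independent of $n$, with $\|v_n\|_{L^\infty(\R_+)}\vee\|\dot\eta_n\|_{L^\infty(\R_+)}\vee\|l_n\|_{L^\infty(\R_+)}\leq M$. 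Moreover the identity above holds automatically at every intermediate $t\in(0,n]$ with the same $n$-th triple: the semigroup property of $\SP$ places the shifted triple $(\eta_n(\cdot+t),v_n(\cdot+t),l_n(\cdot+t))$ in $\SP(\eta_n(t))$, so the two dynamic-programming bounds
\[
\phi(x)\leq\int_0^t(L+gl_n)\d s+\phi(\eta_n(t)),\qquad \phi(\eta_n(t))\leq\int_t^n(L+gl_n)\d s+\phi(\eta_n(n))
\]
add up to an equality once the identity at $t=n$ is invoked, which forces equality in each.

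Next I would extract a locally convergent subsequence. Applying Lemma~\ref{thm:x2} on $[0,N]$ for each $N\in\N$ together with a standard diagonal argument produces $(\eta,v,l)\in\SP(x)$ and $n_j\to\infty$ with $\eta_{n_j}\to\eta$ locally uniformly on $\ol\R_+$ and with $v_{n_j}\d s$ and $l_{n_j}\d s$ converging weakly-star on $C([0,T],\R^n)^*$ and $C([0,T])^*$ respectively, for every $T>0$. The uniform $L^\infty$ bounds pass to the limit, giving $\eta\in\Lip(\ol\R_+)$ and $v,l\in L^\infty(\R_+)$. To verify (\ref{eq:xx}) at an arbitrary $t>0$, the lower-semicontinuity statement of Lemma~\ref{thm:x2} (combined with uniform convergence of $\eta_{n_j}$ on $[0,t]$ and continuity of $\phi$) yields
\[
\int_0^t\bigl(L(\eta,-v)+g(\eta)l\bigr)\d s\leq\liminf_{j\to\infty}\int_0^t\bigl(L(\eta_{n_j},-v_{n_j})+g(\eta_{n_j})l_{n_j}\bigr)\d s=\phi(x)-\phi(\eta(t)),
\]
while the reverse inequality is the dynamic-programming bound $\phi(x)=u(x,t)\leq\int_0^t(L+gl)\d s+\phi(\eta(t))$ applied to the limit triple.

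The main obstacle is that the triple must calibrate $\phi$ simultaneously at \emph{every} $t>0$, whereas Theorem~\ref{thm:x3} only produces one that does so at a prescribed time. This is what dictates the two-step structure: first convert the single-time identity at $t=n$ into one valid on the whole interval $(0,n]$ via the dynamic-programming sandwich, and only then let $n\to\infty$. The Lipschitz regularity of $\phi$ is essential here because it is what yields $n$-independent $L^\infty$ bounds on $v_n$, which in turn provide both the compactness needed for the diagonal extraction and the announced $\Lip\tim L^\infty\tim L^\infty$ regularity of the limiting $(\eta,v,l)$.
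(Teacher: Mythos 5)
Your proof is correct, but it takes a genuinely different route from the paper. The paper never passes to a limit: it applies Theorem \ref{thm:x3} on the fixed time interval $[0,1]$ with initial data $\phi$, then repeats this from the endpoint $\eta_{j-1}(1)$ of the previous segment, and concatenates the unit-time calibrated pieces using the semigroup property of $\SP$; calibration at every $t>0$ (not just at integer times) then follows from the same dynamic-programming sandwich you use, and the $\Lip\tim L^\infty\tim L^\infty$ regularity follows because, as the paper puts it, ``reviewing the proof of Theorem \ref{thm:x3}'' shows the $L^\infty$ bound on each $v_k$ depends only on the Lipschitz constant of $u(y,t)=\phi(y)$, hence is uniform in $k$. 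You instead take, for each horizon $n$, a single triple calibrated at $t=n$, upgrade it to calibration on all of $(0,n]$ by the sandwich, and then send $n\to\infty$ via Lemma \ref{thm:x2}, a diagonal extraction and lower semicontinuity, recovering (\ref{eq:xx}) in the limit together with the reverse DPP inequality. Both arguments are sound; the paper's concatenation buys a completely limit-free construction (the compactness machinery of Lemma \ref{thm:x2} is not re-invoked), while yours is arguably more uniform in structure but needs two small repairs of bookkeeping: the uniform-in-$n$ bound $M$ is not literally asserted in the statement of Theorem \ref{thm:x3} (its statement is qualitative), so like the paper you must inspect its proof to see the bound depends only on $\Lip(\phi)$ and structural constants; and your bound controls $v_n$ only on $[0,n]$, so for the uniform integrability required by Proposition \ref{thm:s3}/Lemma \ref{thm:x2} on a fixed $[0,T]$ you should either restrict to the tail $n_j\geq T$ or normalize each triple to $(\eta_n(n),0,0)$ after time $n$ (as is done inside the proof of Lemma \ref{thm:x2}). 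With these routine adjustments your limit triple lies in $\SP(x)$, inherits the $L^\infty$ bounds under weak-star convergence, and satisfies (\ref{eq:xx}) for every $t>0$, so the argument goes through.
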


Let $\phi$ and $(\eta,\,v,\,l)\in\SP$. 
Following \cite{Fathi-weakKAM}, we call a triple $(\eta,\,v,\,l)\in\SP$ 
{\em calibrated extremal} associated with $\phi$ if (\ref{eq:xx}) holds for all $t>0$. 

\begin{proof}Note that the function $u(x,t)=\phi(x)$ is a solution of 
(\ref{eq:i3}), (\ref{eq:i4}), with $a:=0$. Using Theorem \ref{thm:x3}, we define 
inductively the sequence 
$\{(\eta_k,\,v_k,\,l_k)\}\subset\SP$ as follows. We choose first 
a $(\eta_1,\,v_1,\,l_1)\in\SP(x)$ 
so that
\[
\phi(\eta(0))-\phi(\eta(1))=\int_0^1\big(L(\eta_1(s))+g(\eta_1(s))l_1(s)\big)\d s.
\]
We next assume that $\{(\eta_k,\,v_k,\,l_k)\}_{k\leq j-1}$, with $j\geq 2$, and choose 
a $(\eta_j,\,v_j,\,l_j)\in\SP(\eta_{j-1}(1))$ so that
\[
\phi(\eta_j(1))-\phi(\eta_j(0))=\int_0^1\big(L(\eta_j(s),-v_j(s))
+g(\eta_j(s))l_j(s)\big)\d s. 
\]
Once the sequence $\{(\eta_k,\,v_k,\,l_k)\}_{k\in\N}\subset \SP$ is given, we define the 
$(\eta,\,v,\,l)\in\SP(x)$ by setting 
$(\eta(s+k),\,v_k(s+k),\,l(s+k))=(\eta_k(s),\,v_k(s),\,l_k(s))$ for $k\in\N\cup\{0\}$ 
and $s\in[0,\,1)$. 
It is clear that the triple $(\eta,v,l)$ satisfies (\ref{eq:xx}).  
Thanks to Theorem \ref{thm:x3}, we have $(\eta_k,\,v_k,\,l_k)\in 
\Lip([0,\,1])\tim L^\infty(0,\,1)\tim L^\infty(0,\,1)$ for $k\in\N$. 
Reviewing the proof of Theorem \ref{thm:x3}, we see easily that 
$\sup_{k\in\N}\|v_k\|_{L^\infty(0,\,1)}<\infty$, from which we conclude that 
$(\eta,\,v,\,l)\in \Lip(\ol\R_+)\tim L^\infty(\R_+)\tim L^\infty(\R_+)$.   
\end{proof}

\begin{thm}\label{thm:x5}Let $\phi\in\Lip(\ol\gO)$ be 
a solution of {\em(\ref{eq:i1}), (\ref{eq:i2})}, with $a:=0$ and 
$(\eta,\,v,\,l)\in\SP$ a calibrated extremal associated with $\phi$. Then 
\[
\lim_{t\to\infty}\dist(\eta(t),\,\cA)=0. 
\]
\end{thm}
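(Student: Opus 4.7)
The plan is to leverage the strict subsolution furnished by Lemma~\ref{thm:am7}, namely a pair $(\psi,f)\in\Lip(\ol\gO)\tim C(\ol\gO)$ with $f\le 0$ on $\ol\gO$, $f<0$ on $\ol\gO\setminus\cA$, and $\psi$ a viscosity subsolution of $H(x,Du)\le f(x)$ in $\gO$, $D_\gg u\le g$ on $\gG$. Combined with the calibration identity~(\ref{eq:xx}), this will produce an $L^1(\ol\R_+)$ bound for $(-f)\circ\eta$; the Lipschitz continuity of $\eta$ supplied by Theorem~\ref{thm:x4} will then preclude any persistent excursion of $\eta(t)$ away from $\cA$.

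The first step is to establish the subsolution-along-trajectory inequality
\[
\psi(\eta(0))-\psi(\eta(t))\le\int_0^t\big(L(\eta(s),-v(s))+f(\eta(s))+g(\eta(s))l(s)\big)\d s, \qquad t>0.
\]
To this end I will set $\widetilde H(x,p):=H(x,p)-f(x)$, whose Legendre dual is $\widetilde L=L+f$. Hypotheses (A1)--(A3) are preserved, so Theorem~\ref{thm:cn1} provides a unique solution $\widetilde w$ of $u_t+\widetilde H(x,Du)=0$ in $\gO\tim\R_+$, $D_\gg u=g$ on $\gG\tim\R_+$, $u(\cdot,0)=\psi$, given by the Lax--Oleinik formula with Lagrangian $\widetilde L$. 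Viewing $\psi$ as a time-independent function makes it a viscosity subsolution of that same evolution problem, so Theorem~\ref{thm:c4} (with $U:=\R^n$) yields $\psi\le\widetilde w$ on $\ol\gO\tim\ol\R_+$; specializing the infimum to the fixed trajectory $(\eta,v,l)$ gives the displayed inequality.

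The second step is to subtract this inequality from the calibration identity (\ref{eq:xx}) to obtain
\[
\int_0^t(-f)(\eta(s))\d s\le(\phi-\psi)(\eta(0))-(\phi-\psi)(\eta(t))\le 2\|\phi-\psi\|_\infty, \qquad t>0.
\]
Since $-f\ge 0$, this $t$-uniform bound shows $(-f)\circ\eta\in L^1(\ol\R_+)$. The third step is a contradiction argument: if there exist $\gep>0$ and $t_k\to\infty$ with $\dist(\eta(t_k),\cA)\ge\gep$, then the Lipschitz constant $M$ of $\eta$ (from Theorem~\ref{thm:x4}) gives $\dist(\eta(s),\cA)\ge\gep/2$ throughout each interval $I_k:=[t_k,t_k+\gep/(2M)]$. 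The compact set $\{x\in\ol\gO\mid\dist(x,\cA)\ge\gep/2\}$ lies in $\ol\gO\setminus\cA$, where $-f$ is continuous and positive, hence bounded below by some $\gd>0$. Extracting a subsequence so that the $I_k$ are pairwise disjoint would then force
\[
\int_0^\infty(-f)(\eta(s))\d s\ge\sum_{k}\gd\cdot\fr{\gep}{2M}=\infty,
\]
contradicting the $L^1$ bound and forcing $\dist(\eta(t),\cA)\to 0$.

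The main obstacle will be making the subsolution-along-trajectory inequality rigorous at the merely Lipschitz level; this is precisely where the combination of the comparison result (Theorem~\ref{thm:c4}) and the variational formula (Theorem~\ref{thm:cn1}) for the Neumann-type Cauchy problem developed in Sections~3 and~5 is exploited. The remainder of the argument is a routine compactness/measure-theoretic exercise.
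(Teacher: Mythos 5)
Your proposal follows essentially the same strategy as the paper's proof: Lemma~\ref{thm:am7} supplies the strict subsolution pair $(\psi,f)$; Theorem~\ref{thm:cn1} (the Lax--Oleinik formula for the modified Hamiltonian $H-f$, whose Lagrangian is $L+f$) together with the comparison Theorem~\ref{thm:c4} yields the subsolution inequality along the Skorokhod trajectory; subtracting the calibration identity (\ref{eq:xx}) gives the uniform bound $\int_0^t(-f)(\eta(s))\,\d s\le 2\|\phi-\psi\|_\infty$, hence $(-f)\circ\eta\in L^1(\ol\R_+)$; a continuity estimate for $\eta$ then forces $\dist(\eta(t),\cA)\to 0$.

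The one imprecision is your appeal to Theorem~\ref{thm:x4} for ``the Lipschitz constant $M$ of $\eta$''. Theorem~\ref{thm:x4} is an existence statement: it produces \emph{one} Lipschitz calibrated extremal at each starting point, but does not assert that an arbitrary calibrated extremal $(\eta,v,l)\in\SP$, as posited in the hypotheses of Theorem~\ref{thm:x5}, is Lipschitz. The claim is in fact true --- the argument in the final paragraph of the proof of Theorem~\ref{thm:x3} uses only the calibration equality over subintervals, the superlinearity of $L$, and Proposition~\ref{thm:s2}, and so transfers verbatim to any calibrated extremal --- but this transfer needs to be stated; the paper itself avoids the issue by extracting directly from the calibration identity and the bound $L(x,\xi)+g(x)r\ge(A-C_0\|g\|_\infty)|\xi|-C_A$ a uniform-continuity estimate for $\eta$, which already suffices. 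Once $\eta$ is known to be uniformly continuous (or Lipschitz), your pairwise-disjoint-intervals contradiction is a correct variant of the paper's conclusion, which instead notes that $\lim_{t\to\infty}\int_t^{t+1}|f(\eta(s))|\,\d s=0$ and deduces $f(\eta(t))\to 0$ from continuity of $f\circ\eta$.
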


\begin{proof} According to Lemma \ref{thm:am7}, there are functions 
$\psi\in\Lip(\ol\gO)$ and $f\in C(\ol\gO)$ such that 
$f(x)<0$ for $x\in\ol\gO\setminus\cA$, $f(x)\leq 0$ for $x\in\ol\gO$ and  
$\psi$ is a subsolution of 
(\ref{eq:b1}), with $U=\R^n$.  
Then $u(x,t):=\psi(x)$ is a subsolution of (\ref{eq:i3}), (\ref{eq:i4}), with 
$H$ replaced by $H-f$ and $a:=0$. By comparison, if $w\in C(\ol\gO\tim\ol\R_+)$ is a 
solution of (\ref{eq:i3})--(\ref{eq:i5}), 
with $H$ replaced by $H-f$, $a:=0$ and $u_0:=\psi$, then we get 
$u\leq w$ on $\ol\gO\tim\ol\R_+$. Hence, 
using Theorem \ref{thm:cn1}, with $H$ replaced by $H-f$, we find that for any $t>0$, 
\begin{align}\label{eq:x11}
\psi(\eta(0))\leq&\, 
\int_0^t \big(L(\eta(s),-v(s))+f(\eta(s))+g(\eta(s))l(s)\big)\d s+\psi(\eta(t))\\
=&\,\phi(\eta(0))-\phi(\eta(t))+\psi(\eta(t))+\int_0^t f(\eta(s))\d s.\nonumber
\end{align}
From this we find that 
\[
\inf_{t>0}\int_0^t f(\eta(s))\d s>-\infty \ \ \hb{ or equivalently } \ \  
\int_0^\infty |f(\eta(s))|\d s<\infty, 
\]
which yields
\begin{equation}\label{eq:xx1}
\lim_{t\to\infty}\int_t^{t+1}|f(\eta(s))|\d s=0. 
\end{equation}

Reviewing the proof of Lemma \ref{thm:x2} up to (\ref{eq:x7}), 
since 
\begin{align*}
\int_t^{t+1}\big(L(\eta(s),-v(s))+g(\eta(s)l(s)\big)\d s 
\,&=\phi(\eta(t))-\phi(\eta(t+1)) \\
&\leq 2\|\phi\|_\infty \ \ \hb{ for }t\geq 0,
\end{align*}
we deduce that for any $A>0$ and $t\geq 0$, 
\[
\big(A-C_0\|g\|_\infty\big)\int_t^{t+\gep}
|v(s)|\d s\leq C_A\gep+2\|\phi\|_\infty+C_1, 
\] 
where the constants $C_0,\, C_1,\, C_A$ 
are selected as in the proof of Lemma \ref{thm:x2}. This estimate 
together with Proposition \ref{thm:s3} guarantees that 
$\eta$ is uniformly continuous on $\ol\R_+$. Now, (\ref{eq:xx1}) ensures that 
$\lim_{t\to\infty}f(\eta(t))=0$ and hence $\lim_{t\to\infty}\dist(\eta(t),\cA)=0$. 
\end{proof}

Let $\SP_{-\infty}$ denote the set of all triples 
$(\eta,\,v,\,l):\R\to \ol\gO\tim\R^n\tim\ol\R_+$ such that 
for every $T\geq 0$, the triple $(\eta_T,\,v_T,\,l_T)$ defined on $\ol\R_+$ by 
$(\eta_T(t),\,v_T(t),\,l_T(t))=(\eta(t-T),\,v(t-T),\,l(t-T))$ belongs to 
$\SP$. 

\begin{thm}\label{thm:x6}For any $y\in\cA$ there exists a triple $(\eta,\,v,\,l)\in\SP_{-\infty}$ 
such that $\eta(0)=y$, $\eta(t)\in\cA$ for $t\in\R$ and for any $-\infty<\gs<\tau<\infty$,
\[
\int_\gs^\tau\big(L(\eta(s),-v(s))+g(\eta(s))l(s)\big)\d s=d(\eta(\gs),\eta(\tau)), 
\]
where $d$ is the function on $\ol\gO\tim\ol\gO$ given by {\em(\ref{eq:def-d})}. 
\end{thm}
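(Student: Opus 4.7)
The plan is to build $(\eta,v,l)$ as a limit of periodized near-calibrated loops based at $y$. First I would invoke Theorem \ref{thm:am4}: for each $n\in\N$, it furnishes $T_n>n$ and $(\eta_n,v_n,l_n)\in\SP(y)$ with $\eta_n(T_n)=y$ and
\[
\int_0^{T_n}\bigl(L(\eta_n(s),-v_n(s))+g(\eta_n(s))l_n(s)\bigr)\d s<\tfrac 1n.
\]
These triples admit a $T_n$-periodic extension $(\hat\eta_n,\hat v_n,\hat l_n)\in\SP_{-\infty}$ --- continuous since $\eta_n(0)=\eta_n(T_n)=y$ --- with $\hat\eta_n(0)=y$ for every $n$.

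The next step is the key per-period estimate. For $0\leq\gs\leq\tau\leq T_n$, splitting $\int_0^{T_n}=\int_0^\gs+\int_\gs^\tau+\int_\tau^{T_n}$, bounding the outer terms below by $d(y,\hat\eta_n(\gs))$ and $d(\hat\eta_n(\tau),y)$ via Proposition \ref{thm:am3}, and using the triangle inequality $0=d(y,y)\leq d(y,\hat\eta_n(\gs))+d(\hat\eta_n(\gs),\hat\eta_n(\tau))+d(\hat\eta_n(\tau),y)$, one arrives at
\[
d(\hat\eta_n(\gs),\hat\eta_n(\tau))\leq\int_\gs^\tau\bigl(L(\hat\eta_n,-\hat v_n)+g(\hat\eta_n)\hat l_n\bigr)\d s\leq\tfrac 1n+d(\hat\eta_n(\gs),\hat\eta_n(\tau));
\]
periodicity extends the bound to every one-period interval. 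The specialization $\gs=0$ together with $\int_0^\tau\geq d(y,\hat\eta_n(\tau))$ likewise forces $d(y,\hat\eta_n(\tau))+d(\hat\eta_n(\tau),y)<1/n$, and symmetrically when $\tau=0$ and $\gs<0$.

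I would then apply Lemma \ref{thm:x2} to suitable time-shifts of $(\hat\eta_n,\hat v_n,\hat l_n)$ covering $[-T,T]$ for each integer $T$, and diagonalize to extract a subsequence converging on every compact subset of $\R$ --- uniformly for $\hat\eta_n$, weakly-star for $\hat v_n\,\d s,\hat l_n\,\d s$ --- to a triple $(\eta,v,l)\in\SP_{-\infty}$ with $\eta(0)=y$. Passing to the limit in the bounds above, using continuity of $d$, Lemma \ref{thm:x2}'s lower semicontinuity of the cost, and the matching lower bound $\int_\gs^\tau\geq d(\eta(\gs),\eta(\tau))$ from Proposition \ref{thm:am3}, yields the calibration $\int_\gs^\tau=d(\eta(\gs),\eta(\tau))$ whenever $[\gs,\tau]\subset(-\infty,0]$ or $[\gs,\tau]\subset[0,\infty)$, together with the symmetry identity $d(y,\eta(s))+d(\eta(s),y)=0$ for every $s\in\R$. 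For intervals $\gs<0<\tau$, additivity at $0$ gives
\[
\int_\gs^\tau=d(\eta(\gs),y)+d(y,\eta(\tau))=-d(y,\eta(\gs))+d(y,\eta(\tau)),
\]
and the triangle inequality $d(y,\eta(\tau))\leq d(y,\eta(\gs))+d(\eta(\gs),\eta(\tau))$ combined with $\int_\gs^\tau\geq d(\eta(\gs),\eta(\tau))$ pins the value down to $d(\eta(\gs),\eta(\tau))$.

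To finish, I would verify $\eta(t)\in\cA$ for every $t\in\R$. One full period of $\hat\eta_n$ based at $\hat\eta_n(t)$ is a loop at $\hat\eta_n(t)$ of length $T_n>n$ with cost $<1/n$ by period-invariance of the integrand. Since $d$ is Lipschitz on $\ol\gO\tim\ol\gO$ and short connecting curves between nearby points --- with both cost and time of order $|\hat\eta_n(t)-\eta(t)|$, as constructed in the proof of Proposition \ref{thm:am3} --- are available, prepending and appending such connectors to the loop produces loops at $\eta(t)$ of length tending to $\infty$ and cost tending to $0$; Theorem \ref{thm:am4} then forces $\eta(t)\in\cA$. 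The hardest part, I expect, is the limiting step: the per-period bounds alone only yield calibration on one-sided intervals, and the symmetry identity $d(y,\eta(s))=-d(\eta(s),y)$ --- itself a byproduct of the specialization $\gs=0$ in the per-period bound --- is precisely what is needed to push the calibration through the period boundary at $t=0$.
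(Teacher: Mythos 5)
Your proof is correct and follows essentially the same route as the paper's: Theorem \ref{thm:am4} to produce long near-zero-cost loops at $y$, Proposition \ref{thm:am3} together with the triangle inequality for $d$ to pin down the cost over subintervals, and Lemma \ref{thm:x2} (after time-shifts and diagonalization) to extract the limit. The one place where you take a detour is for intervals straddling $t=0$: the paper places its approximating triple on the two-period window $[-\tau_k,\tau_k]$ --- which is exactly your periodic extension $(\hat\eta_n,\hat v_n,\hat l_n)$ restricted there --- and applies the three-term triangle inequality directly with $-\tau_k\leq\gs\leq\tau\leq\tau_k$, so that the two-sided estimate $d(\eta_k(\gs),\eta_k(\tau))\leq\int_\gs^\tau<d(\eta_k(\gs),\eta_k(\tau))+2/k$ holds in one shot across the period boundary; your per-period bookkeeping forces the extra step via additivity at $0$ and the symmetry identity $d(y,\eta(s))+d(\eta(s),y)=0$. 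The paper's verification of $\eta(t)\in\cA$ is terser --- it just cites the limit identity $d(y,\eta(\tau))+d(\eta(\tau),y)=0$ together with Theorem \ref{thm:am4} --- but the content is the same as your explicit prepend/append construction of long cheap loops at $\eta(t)$, and both are valid.
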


\begin{proof}Fix $y\in\cA$. By Theorem \ref{thm:am4}, for any $k\in\N$ there is a 
triple $(\bar\eta_k,\,\bar v_k,\,\bar l_k)\in\SP$ such that $\bar\eta_k(0)=\bar\eta_k(\tau_k)=y$ for some $\tau_k>k$ and 
\begin{equation}\label{eq:x12}
\fr 1k>\int_0^{\tau_k}\big(L(\bar\eta_k(s),-\bar v_k(s))+g(\bar\eta_k(s))\bar l_k(s)\big)\d s. 
\end{equation}
For $k\in\N$ we set 
\[
(\eta_k(t),\,v_k(t),\,l_k(t))
=\left\{
\begin{aligned}
&(\bar\eta_k(t),\,\bar v_k(t),\,\bar l_k(t)) && \hb{ for }t\in[0,\,\tau_k],\\
&(\bar\eta_k(t+\tau_k),\,\bar v_k(t+\tau_k),\,\bar l_k(t+\tau_k))\quad && \hb{ for }t\in[-\tau_k,\,0]. 
\end{aligned}
\right.
\]
In view of Proposition \ref{thm:am3}, using (\ref{eq:x12}), we see that 
if $-\tau_k\leq \gs\leq\tau\leq \tau_k$, then 
\begin{align*}
d(y,\eta_k(\gs))\leq&\, \int_{-\tau_k}^\gs\big(L(\eta_k(s),-v_k(s))+g(\eta_k(s))l_k(s)\big)\d s,\\
d(\eta_k(\gs),\eta_k(\tau))\leq&\, \int_\gs^{\tau}\big(L(\eta_k(s),-v_k(s))+g(\eta_k(s))l_k(s)\big)\d s,\\
d(\eta_k(\tau),y)\leq &\,\int_\tau^{\tau_k}\big(L(\eta_k(s),-v_k(s))+g(\eta_k(s))l_k(s)\big)\d s,\\ 
\Big(\int_{-\tau_k}^{\gs}+\int_\gs^\tau+\int_\tau^{\tau_k} \Big)&
\big(L(\eta_k(s),-v_k(s))+g(\eta_k(s))l_k(s)\big)\d s\\
<&\,\fr 2k=\fr 2k +d(y,\,y)\\
\leq&\, \fr 2k+d(y,\,\eta_k(\gs))+d(\eta_k(\gs),\,\eta_k(\tau))
+d(\eta_k(\gs),\,y).
\end{align*}
Consequently we get for $-\tau_k<\gs<\tau<\tau_k$  
\begin{gather*}
\begin{aligned}
d(\eta_k(\gs),\eta_k(\tau))\,&\leq
\int_\gs^{\tau}\big(L(\eta_k(s),-v_k(s))+g(\eta_k(s))l_k(s)\big)\d s\\ 
&<d(\eta_k(\gs),\eta_k(\tau))+\fr 2k,
\end{aligned}
\\
0\leq d(y,\,\eta_k(\tau))+d(\eta_k(\tau),\,y)<\fr 2k. 
\end{gather*}
Hence, applying Lemma \ref{thm:x2}, we find a triple $(\eta,\,v,\,l)\in\SP_{-\infty}$ such that 
$\eta(0)=y$ and for any $-\infty<\gs<\tau<\infty$,
\begin{align}
d(y,\,\eta(\tau))+d(\eta(\tau),\,y)=&\,0,\label{eq:x13}\\
\int_{\gs}^\tau \big(L(\eta(s),\,-v(s))+g(\eta(s)l(s)\big)\d s\leq&\, d(\eta(\gs),\,\eta(\tau)).
\nonumber
\end{align}
The last inequality yields for any $-\infty<\gs<\tau<\infty$, 
\[
d(\eta(\gs),\,\eta(\tau))=\int_\gs^\tau\big(L(\eta(s),\,-v(s))+g(\eta(s))l(s)\big)\d s. 
\]
Theorem \ref{thm:am4} and (\ref{eq:x13}) together guarantee that 
$\eta(t)\in\cA$ for all $t\in\R$. The proof is complete. 
\end{proof}

\bibliographystyle{amsalpha}
\bibliography{ishiiref09}

\bye